\newcommand{\norm}[1]{\mbox{$\left\|#1\right\|$}}
\newcommand{\x}{\times}
\newcommand{\cs}{\mbox{$C^{*}$-algebra}}
\newcommand{\css}{\mbox{$C^{*}$-algebras}}
\newcommand{\C}{\mathbb{C}}
\newcommand{\R}{\mathbb{R}}
\newcommand{\ov}[1]{\mbox{$\overline{#1}$}}
\newcommand{\al}{\mbox{$\alpha$}}
\newcommand{\eps}{\mbox{$\epsilon$}}
\newcommand{\bt}{\mbox{$\beta$}}
\newcommand{\ga}{\mbox{$\gamma$}}
\newcommand{\Ga}{\mbox{$\Gamma$}}
\newcommand{\de}{\mbox{$\delta$}}
\newcommand{\De}{\mbox{$\Delta$}}
\newcommand{\la}{\mbox{$\lambda$}}
\newcommand{\ka}{\mbox{$\kappa$}}
\newcommand{\La}{\mbox{$\Lambda$}}
\newcommand{\si}{\mbox{$\sigma$}}
\newcommand{\mfA}{\mathfrak{A}}
\newcommand{\mfg}{\mathfrak{g}}
\newcommand{\mfH}{\mathfrak{H}}
\newcommand{\mfh}{\mathfrak{h}}
\newcommand{\mfK}{\mathfrak{K}}
\newcommand{\mfL}{\mathfrak{L}}
\newcommand{\bgc}{\begin{center}}
\newcommand{\edc}{\end{center}}
\newcommand{\be}{\begin{enumerate}}
\newcommand{\ee}{\end{enumerate}}
\newcommand{\beq}{\begin{equation}}
\newcommand{\eeq}{\end{equation}}
\newcommand{\beqn}{\begin{eqnarray}}
\newcommand{\eeqn}{\end{eqnarray}}
\newcommand{\beqns}{\begin{eqnarray*}}
\newcommand{\eeqns}{\end{eqnarray*}}
\newcommand{\bq}{\begin{quote}}
\newcommand{\eq}{\end{quote}}
\newcommand{\bi}{\begin{itemize}}
\newcommand{\ei}{\end{itemize}}
\newcommand{\bd}{\begin{description}}
\newcommand{\ed}{\end{description}}
\newcommand{\lan}{\mbox{$\langle$}}
\newcommand{\ran}{\mbox{$\rangle$}}
\theoremstyle{plain}
\newtheorem{theorem}{Theorem}
\newtheorem{lemma}{Lemma}
\newtheorem{definition}{Definition}
\newtheorem{proposition}{Proposition}
\newtheorem{corollary}{Corollary}
\numberwithin{equation}{section}
\begin{document}
\title{The equivariant analytic index for proper groupoid actions}
\author{Alan L. T. Paterson}
\address{Department of Mathematics\\
University of Mississippi\\
University, MS 38677}
\email{mmap@olemiss.edu}
\keywords{elliptic equivariant pseudodifferential operators, continuous family
groupoids, Lie groupoids, proper actions, the analytic index, KK-theory}
\subjclass{Primary: 19K35, 19K56, 22A22, 46L80, 46L87, 58B34, 58J40;
Secondary: 19L47, 55N15, 58B15, 58H05}
\date{}
\begin{abstract}
The paper constructs the analytic index for an elliptic pseudodifferential family of 
$L^{m}_{\rho,\de}$-operators invariant under the proper action of a continuous family groupoid on a $G$-compact, $C^{\infty,0}$ $G$-space.
\end{abstract}

\maketitle

\section{Introduction}
In his
book (\cite[p.151]{Connesbook}), Alain Connes points out that a number of index theorems 
can all be formulated within the following general framework which he calls {\em the general conjecture for smooth groupoids}.  There are given a Lie groupoid $G$ acting properly on a manifold $X$, and a $G$-invariant, elliptic family $D$ of pseudodifferential operators determining a map between the compactly supported, smooth sections of a pair of $G$-vector bundles $E,F$ over $X$.  One then has to prove the index theorem in this context. 

In this paper we will extend and establish part of this program.  Following the classical papers of Atiyah and Singer ((\cite{AS1,AS3,AS4})), the proof of such a theorem falls into four primary parts.  First we have to construct the analytic index in an appropriate $K$-group.  (This $K$-group will be $K_{0}(C_{red}^{*}(G))$.)  Next, we have to construct the topological index in the same $K$-group.  Then we have to show that these $K$-elements are the same.  Finally, a cohomological formula for the index - corresponding to that in classical index theory (\cite{AS3}) - is to be obtained.  Examples of such a formula have been obtained by Connes and Moscovici in the case of the $L^{2}$-index theorem for  homogeneous spaces of Lie groups
(\cite{CoMos})  and by Connes and Skandalis for the longitudinal index theorem (\cite{CoSkand}). 

The objective of this paper is to give a reasonably detailed proof of the construction of the first of these parts, i.e. 
that of the analytic index.  We will in fact work in the more general continuous family context. A number of papers are relevant to this construction in the context where $X=G$, e.g. \cite{LMN00,LMN01,LN,Mont,Mont3,Montthesis,MontP,Nistor,NWX}.  
A version of our construction under special assumptions - including the assumptions that $G^{0}$ is a proper, $G$-compact, $G$-space and that $X$ is a fiber bundle over $G^{0}$ with compact smooth manifold as fiber -  is proved in \cite{jsrc}.  In that paper, the author asserted his belief that the proof could be adapted to give the analytic index in complete generality.  The present paper carries out this general proof in the continuous family context.  Continuous family groupoids, in the case of {\em holonomy groupoids}, were effectively considered by Connes in his paper on non-commutative integration (\cite[p.112]{Cointeg}) where they arise from  $C^{\infty,0}$ foliations.  We will use his $C^{\infty,0}$ notation throughout the paper.  The reader is referred to \S 6 for motivating examples for the theory presented here.

Sections 2 and 3 of the paper recapitulate and extend the theory of continuous families and 
continuous family groupoids discussed in \cite{families}.  The theory, which of course parallels the corresponding $C^{\infty}$-theory, seems to require a reasonably detailed description.  We also incorporate in \S 3 the $C^{\infty,0}$-versions of the results on proper Lie groupoid actions in \cite{jsrc}.
The main improvement in the theory of \S 3 is that it is shown that continuous family groupoids 
(and indeed, proper $C^{\infty,0}$ $G$-spaces in general)
always have $C^{\infty,0}$ left Haar systems.  (In \cite{families} it was shown that only {\em continuous} left Haar systems exist.)  A $C^{\infty,0}$ left Haar system is required for our analytic index theorem.

Following the sketch of a group equivariant index theorem by Kasparov (\cite{Kasp1}), rather than restricting to the classical pdo's used by Atiyah and Singer, we use (the more general) $L^{m}_{\rho,\de}$-operators of H\"{o}rmander.  Also, for the purpose of the paper, we need to develop 
a theory of ``locally continuously varying'' families of such pdo's.  The author has been unable to find this theory in the literature and so it is developed within the paper (in \S 4).  The proofs are not much more involved than those in the classical case.  Also, the results that we need for continuously varying families of pdo's generalize fairly routinely from the single operator case given in the book by Shubin (\cite{Shubin}).  
 
Next, in \cite{jsrc}, the analytic index was obtained by using the equivariant K-theory of N. C. Phillips (\cite{Phillips}), generalized to groupoid actions.  In the present case, we have been able to avoid the use of Phillips's theory by going directly to a Kasparov module.  A key technique of the theory presented here is a groupoid version of the averaging technique of Connes and Moscovici, where they use a cut-off function in the proof of the homogeneous index theorem.  Indeed, the Kasparov module that we use to obtain the analytic index can be regarded as the  ``averaged'' natural Kasparov module in the non-equivariant case for compactly supported elliptic pdo families.  Comments pertinent to and examples illustrating the theory developed here are discussed in the final section of the paper.  Part (5) of the final section contains a very brief description of an approach to the topological index side of Connes's conjecture.  Lastly, I wish to express my gratitude to the referee, whose many comments and suggestions have greatly improved this paper.

\section{Continuous families of manifolds}

Continuous families of compact manifolds were introduced by Atiyah and Singer
(\cite{AS4}) in connection with their index theorem for families. They are fiber bundles with smooth compact fiber.  The more
general notion of a continuous family of manifolds that will be needed in our
discussion of the analytic index is developed in \cite{families} and the reader
is referred to that paper for more details pertinent to the discussion below.

All locally compact spaces are assumed to be second countable and Hausdorff.
If $X$ is a locally compact space and $M$ is a (smooth) manifold, then
$C(X)$, $C^{\infty}(M)$ are respectively the spaces of continuous and smooth
complex-valued functions on $X,M$.  The subspaces of $C(X), C^{\infty}(M)$
consisting of the functions in $C(X), C^{\infty}(M)$ with compact support are
denoted by $C_{c}(X), C_{c}^{\infty}(M)$.  The family of compact subsets of $X$ is denoted by 
$\mathcal{C}(X)$.

Let $T$ be a locally compact space
and $M, N$ be manifolds.  Then (cf. \cite[p.110]{Cointeg} for the foliation case) a function $f:T\x M\to T\x N$ is said to be a {\em $C^{\infty,0}$-function} ($f\in C^{\infty,0}(T\x M,T\x N)$) if for all $t\in T$, 
$f(\{t\}\x M)\subset \{t\}\x N\cong N$ and the map 
$t\to f^{t}$, where $f^{t}$ is the restriction of $f$ to $\{t\}\x M$, is continuous from 
$T$ into $C^{\infty}(M,N)$ (with the usual topology of uniform convergence on compact sets of functions for all derivatives).  The $C^{\infty,0}$ notion extends in the obvious ``local'' way to continuous functions $f:U\to V$, where $U, V$ are open subsets of $T\x M, T\x N$ for which 
$p_{1}(U)=p_{1}(V)$, where   $p_{1}$ is the projection onto the first coordinate.    

Now let $X$ be a locally compact space and $p:X\to T$ be a continuous, open surjection.  For each $t$ we put $X^{t}=p^{-1}(\{t\})$.

\begin{definition}             \label{def:cf}
The pair $(X,p)$
is defined to be {\em a continuous family of manifolds over $T$} if
there exists $k\geq 1$ and a set of pairs $\{(U_{\al},\phi_{\al}): \al\in A\}$, where each
$U_{\al}$ is an open subset of $X$ and $\cup_{\al\in A}U_{\al}=X$, such that:
\bi
\item[(i)] for each $\al$, the map
$\phi_{\al}$ is a fiber preserving homeomorphism from $U_{\al}$ onto $p(U_{\al})\x V_{\al}$
where $V_{\al}$ is an open subset of $\R^{k}$;
\item[(ii)] for each $\al, \bt$, the mapping 
$\phi_{\bt}\circ\phi_{\al}^{-1}:\phi_{\al}(U_{\al}\cap U_{\bt})\to
\phi_{\bt}(U_{\al}\cap U_{\bt})$ is $C^{\infty,0}$.
\ei
\end{definition}

As in the case of manifolds,
the family $\mfA=\{(U_{\al},\phi_{\al}):\al\in A\}$ is called
an {\em atlas} for the continuous family $(X,p)$, and the
$(U_{\al},\phi_{\al})$'s, are called {\em charts}.   For each $t\in T$, the family
$\{U_{\al}\cap X^{t}:\al\in A\}$ is an atlas for a $k$-dimensional manifold structure on $X^{t}$ giving the relative topology of $X^{t}$ as a subset of $X$.

We will take the atlas $\mfA$ to be
maximal.  Then $\mfA$ is a basis for the topology of $X$.  For such a chart $(U,\phi)$, we will write $U\thicksim \phi(U)=p(U)\x W$. 
The simplest example of a continuous family over $T$ is one of the form
$X=T\x M$ where $M$ is a manifold. Such a family is called {\em trivial}.
Every continuous family is locally trivial.  Every open subset $U$ of a continuous family $X$ is itself a continuous family over $p(U)$.

Now let $(X_{1},p_{1}), (X_{2},p_{2})$ be continuous families
over $T$. Their {\em fibered product} $(X_{1}*X_{2},p)$ over $T$ is defined:
\[ X_{1}*X_{2}=\{(x_{1},x_{2})\in X_{1}\x X_{2}: p_{1}(x_{1})
=p_{2}(x_{2})\} \]
and $p(x_{1},x_{2})=p_{1}(x_{1})=p_{2}(x_{2})$. With the relative topology 
on $X_{1}*X_{2}$ as a subset of $X_{1}\x X_{2}$, the pair $(X_{1}*X_{2},p)$ is
a continuous family of manifolds over $T$ in the natural way, with fibers 
$(X_{1}*X_{2})^{t}={X_{1}}^{t}\x {X_{2}}^{t}$ having the product manifold
structure.  Charts for the $C^{\infty,0}$-structure on $X_{1}*X_{2}$ are given by sets $U_{1}*U_{2}$, where $U_{i}\thicksim 
p_{i}(U_{i})\x W_{i}$, $p_{1}(U_{1})=p_{2}(U_{2})$, and in the obvious way, 
$U_{1}*U_{2}\thicksim p(U_{1}*U_{2})\x (W_{1}\x W_{2})$.  

Another way in which $X_{1}*X_{2}$ is a continuous family of manifolds is as follows.  It is easy to see that $(X_{1}*X_{2},t_{1})$ is such a family over $X_{1}$, where 
$t_{1}(x_{1},x_{2})=x_{1}$.

Pull-backs of continuous families are themselves continuous families.
Specifically, let $(X,p)$ be a continuous family over $T$, $Z$ be a locally
compact Hausdorff space and $t:Z\to T$ be a continuous map.  The pull-back
continuous family $(t^{*}X,p')$ over $Z$ is given by: $t^{*}X=\{(z,x)\in Z\x X: t(z)=p(x)\}$,               
and the map $p'$ by: $p'((z,x))=z$.  If $U\thicksim p(U)\x W$ is a chart for $X$, then 
$\{(z,x):t(z)=p(x), x\in U\}\thicksim t^{-1}(p(U))\x W$ is a chart for $t^{*}X$.
The continuous family $(X_{1}*X_{2},t_{1})$ is the pull-back family $(p_{1}^{*}X_{2},p')$. 

We need the notion of a {\em morphism} of continuous families.  
Let $(X_{1},p_{1}), (X_{2},p_{2})$ be continuous families over $T_{1}, T_{2}$.
Let $q:T_{1}\to T_{2}$ be a continuous
map and $f:X_{1}\to X_{2}$ be a continuous fiber preserving map with respect
to $q$ in the sense that $p_{2}\circ f=q\circ p_{1}$. Let $(U_{1},\phi_{1}), (U_{2},\phi_{2})$ be charts in $X_{1},X_{2}$ with $\phi_{i}(U_{i})=p_{i}(U_{i})\x W_{i}$ and 
$f(U_{1})\subset U_{2}$.   So $q(p_{1}(U_{1}))\subset p_{2}(U_{2})$ and 
$f(X_{1}^{t})\subset X_{2}^{q(t)}$.  Write 
$\phi_{2}f\phi_{1}^{-1}(t,w)=(q(t),f^{t}(w))$.  
Then the pair $(f,q)$ is called a {\em morphism} or a {\em $C^{\infty,0}$ map} if the map $t\to f^{t}$ is continuous from $p_{1}(U_{1})$ into $C^{\infty}(W_{1},W_{2})$.  Obviously, in that case, for each $t$, the map $x\to f(x)$ is $C^{\infty}$ from $X_{1}^{t}$ into $X_{2}^{q(t)}$.  

We represent a morphism pair $(f,q)$ by the following commutative diagram:
\begin{equation} \label{CD:morphism}
\begin{CD}
	X_{1}     @>f>>    X_{2}  \\
      @Vp_{1}VV @VVp_{2}V \\
      T_{1} @>q>> T_{2}
\end{CD}
\end{equation}
The set of morphisms $(f,q)$ from $X_{1}$ into $X_{2}$ is denoted by
$C^{\infty,0}(X_{1},X_{2})$.  If in addition $T_{1}=T_{2}=T$ and $q=\text{id}$, then we usually 
just write $f$ in place of $(f,\text{id})$.  The composition of two morphisms $(f,q), (f',q')$ is the morphism $(f'\circ f,q'\circ q)$.  We note that 
$(f'\circ f)^{t}=(f')^{q(t)}\circ f^{t}$.

We write $C^{\infty,0}(X_{1})= C^{\infty,0}(X_{1},\C)$
where $\C$ is regarded as a continuous family over the one point space.  
Further, we set 
$C^{\infty,0}_{c}(X_{1},X_{2})=C^{\infty,0}(X_{1},X_{2})\cap C_{c}(X_{1},X_{2})$,
$C^{\infty,0}_{c}(X_{1})=C^{\infty,0}(X_{1})\cap C_{c}(X_{1})$.  
If $q$ is the identity map and $f$ is a homeomorphism such that both $f, f^{-1}$ are $C^{\infty,0}$ maps, then we say that $f$ is a {\em diffeomorphism}.

Suppose now that $T_{1}=T_{2}=T$, $q=\text{id}$ in (\ref{CD:morphism}), and let $(X,p)$ be a continuous family over $T$.  Then $*$-ing (\ref{CD:morphism}) with $X$ (noting that 
$X*T=X$) gives that $\text{id}*f$ is a morphism:
\begin{equation} \label{CD:morphism2}
\begin{CD}
	X*X_{1}     @>\text{id}*f>>    X*X_{2}  \\
      @Vt_{1}VV        @           VVt_{1}V \\
      X            @>\text{id}>>       X
\end{CD}
\end{equation}

Following the proof of the corresponding result for the 
$C^{\infty}$ case (\cite[1.2,1.3]{Helgason}), 
$C^{\infty,0}$-partitions of unity exist for continuous families.

We will require the notion of a $C^{\infty,0}$ complex
vector bundle over $X$, generalizing that given in the context of
\cite{AS4}.  (The real version is similar.)  Let $(E,\pi)$ be a (continuous) $p$-dimensional,  complex vector bundle 
over $X$ with fibers $E^{x}$.  Suppose further that $(E,p\circ \pi)$ is a continuous family of manifolds over $T$ with fibers $\mathcal{E}^{t}$.  
We say that $(E,\pi)$ is a {\em $C^{\infty,0}$ vector bundle} over $(X,p)$ if, for every $x\in X$, there exists an open neighborhood $U$ of $x$ in $X$ and a vector bundle local trivialization 
$h:\pi^{-1}(U)=E_{U}\to U\x \C^{p}$, where $h$ is $C^{\infty,0}$ over $p(U)$.  It is obvious that for each $t$,
$\mathcal{E}^{t}=E_{X^{t}}$ is a $C^{\infty}$-vector bundle over $X^{t}$.  As in the case 
of $C^{\infty}$-manifolds, a $C^{\infty,0}$ vector bundle is determined by an open cover 
$\{U_{\al}\}$ of $X$ and $C^{\infty,0}$-transition functions $g_{\al \bt}:U_{\al}\cap U_{\bt}\to
GL(p,\C)$ satisfying the usual compatibility relations.  Trivial $C^{\infty,0}$ vector bundles are of the form $X\x \C^{p}$.  It is left to the reader to check that pull-backs of $C^{\infty,0}$ vector bundles are themselves $C^{\infty,0}$ vector bundles.

A $C^{\infty,0}$-section of a $C^{\infty,0}$ vector bundle $E$ is a function $s'\in C^{\infty,0}(X,E)$ such that $s'(x)\in E^{x}$.  So $s'$ is a section of the continuous vector bundle $(E,\pi)$ 
which is also a morphism:
\begin{equation} \label{CD:morphs'}
\begin{CD}
	X     @>s'>>    E  \\
      @VpVV @VVp\circ \pi V \\
      T    @>\text{id}>> T
\end{CD}
\end{equation}
Global $C^{\infty,0}$-sections of $E$ are obtained in the usual way from local ones using a
$C^{\infty,0}$-partition of unity on $X$. It is easy to check that if $E_{1}, E_{2}$ are $C^{\infty,0}$ vector bundles over $X$, then $E_{1}\oplus E_{2}$ and $E_{1}\otimes E_{2}$ are $C^{\infty,0}$ vector bundles over $X$.  	

If $(X,p)$ is a continuous family over $T$, then, using transition functions as for the $C^{\infty}$-case, we obtain that $TX=\cup_{t\in T}TX^{t}$ is
a $C^{\infty,0}$ real vector bundle over $X$.  
A hermitian metric $\ga=\{\ga^{x}\}_{x\in X}$ on $E$ will be called {\em $C^{\infty,0}$} if for any
$C^{\infty,0}$ sections $s',t'$ of $E$, the function $x\to \ga(s'(x),t'(x))$
belongs to $C^{\infty,0}(X)$.  A $C^{\infty,0}$-hermitian metric $\ga$ on $E$ 
can alternatively be defined as a continuous hermitian metric which is a morphism in the following sense:
\begin{equation} \label{CD:hermitian}
\begin{CD}
	E\oplus E                    @>\ga>>    \C  \\
      @Vp\circ (\pi\oplus\pi) VV               @VVV\\
       T                            @>>>          *
\end{CD}
\end{equation}

$C^{\infty,0}$-hermitian metrics  are constructed in the same way that Riemannian
metrics are constructed for smooth manifolds, using local $C^{\infty,0}$-frames
and a $C^{\infty,0}$ partition of unity.  Locally, the vector bundle $E$ can be identified with some
$U\x \C^{p}$, and then a hermitian metric can be identified with a $C^{\infty,0}$-map
\begin{equation}     \label{eq:Auz}
x\to A(x)
\end{equation}
into $P_{p}$, the set of invertible, $p\x p$, positive definite matrices in $M_{p}(\C)$.  Precisely, let $(U,\phi)$ be a chart, $\phi(U)=p(U)\x W$ and $h:E_{U}\to U\x \C^{p}$ be a $C^{\infty,0}$
trivialization of $E_{U}$.  
Then 
\begin{equation}
(A(x)\xi,\eta)=\ga^{x}(h^{-1}(x,\xi),h^{-1}(x,\eta)).
\end{equation}
The dual $E^{*}=\cup_{x\in X}(E^{x})^{*}$ is identified with $E$ using 
a $C^{\infty,0}$-hermitian metric on $E$ in the usual way, so that $E^{*}$ is also a $C^{\infty,0}$ vector bundle over $X$.  In particular, $T^{*}X=(TX)^{*}$ is a $C^{\infty,0}$ vector bundle over $X$.


\section{Continuous family groupoids and their actions} 

A {\em groupoid} is most simply defined as a small category with inverses.  The unit space of $G$ is denoted by $G^{0}$, and the {\em range} and {\em source} maps $r:G\to G^{0}$, $s:G\to G^{0}$ are given by: $r(g)=gg^{-1}, s(g)=g^{-1}g$.    
The multiplication map $(g,h)\to gh$, denoted by $m$, is defined on the set $G^{2}$ of composable pairs 
$\{(g,h): s(g)=r(h)\}$.  The inversion map $g\to g^{-1}$ on $G$ will be denoted by $i$.

For detailed discussions of groupoids (including locally compact and Lie
groupoids), the reader is referred to the books
\cite{Mackenzie,MuhlyTCU,Paterson,rg}. Important examples of groupoids are given by
transformation group groupoids and equivalence relations.

A {\em locally compact groupoid} is a groupoid $G$ which is also a second countable locally compact Hausdorff space for which multiplication and
inversion are continuous. Note that $G^{2},G^{0}$ are closed subsets of $G\x G,
G$ respectively. Let $r^{-1}(\{u\})=G^{u}$ and
$s^{-1}(\{u\})= G_{u}$.   Since $r,s$ are continuous, both $G^{u}$ and
$G_{u}$ are closed subsets of $G$. 

A left Haar system on a locally compact groupoid $G$ is a family of measures $\{\la^{u}\}$ $(u\in G^{0})$ on $G$, where each $\la^{u}$ is a
positive regular Borel measure with support $G^{u}$, and for any $f\in C_{c}(G)$, the function $u\to f^{0}(u)=\int_{G^{u}}f\,d\la^{u}$ is continuous, and 
\[ \int_{G^{s(g)}}f(gh)\,d\la^{s(g)}(h) = \int_{G^{r(g)}}f(h)\,d\la^{r(g)}(h). \]
for all $g\in G$.  In the situation of the present paper, a left Haar system will always exist on $G$.   It follows that the maps $r,s:G\to G^{0}$ are open.

Set $\la_{u}=(\la^{u})^{-1}$ on $G_{u}$.  Then $C_{c}(G)$ is a convolution $^{*}$-algebra, where
\[  f*g(x)=\int f(t)g(t^{-1}x)\, d\la^{r(x)}(t),\hspace{.2in}  f^{*}(x)=\ov{f(x^{-1})}.       \]

We will require the reduced $\cs$ $C_{red}^{*}(G)$ of $G$.  There are two 
approaches to $C_{red}^{*}(G)$, both of which are useful.  For the first, for each $u\in
G^{0}$, the representation $\pi_{u}$ of $C_{c}(G)$
on the Hilbert space $L^{2}(G,\la_{u})$ is given by:
for $f\in C_{c}(G), \xi\in C_{c}(G)\subset L^{2}(G,\la_{u})$,
\begin{equation}
     \pi_{u}(f)(\xi)=f*\xi\in C_{c}(G)\subset L^{2}(G,\la_{u}).       \label{eq:red}
\end{equation}
The reduced $\cs$-norm on $C_{c}(G)$ can then (\cite[p.108]{Paterson})
be defined by:
\[        \norm{f}_{red}=\sup_{u\in G^{0}}\norm{\pi_{u}(f)},       \]
and $C_{red}^{*}(G)$ is the completion of $(C_{c}(G), \norm{.}_{red})$.

The second approach to $C_{red}^{*}(G)$ uses a Hilbert module 
(\cite[Proposition 8]{ag}).  Let $E^{2}$ be the completion of $C_{c}(G)$ under the norm:
$\norm{f}=\sup_{u\in G^{0}}\norm{f^{u}}_{2}$ where $f^{u}=f_{\mid G^{u}}$.
Let $D=C_{0}(G^{0})$.
Then $E^{2}$ is a Hilbert $D$-module with right action $(f,a)\to f\x (a\circ r)$ ($a\in D$) and inner product: $\lan f,f'\ran(u)=\lan f^{u},(f')^{u}\ran=\int \ov{f^{u}}(f')^{u}\,d\la^{u}$.  The map $f\to R_{f}$, where
$R_{f}(F)=F*f$, is a $^{*}$-antihomorphism from $C_{c}(G)$ into $\mfL(E^{2})$.  The closure of the algebra of operators $R_{f}$ in $\mfL(E^{2})$ is the reduced $\cs$ $C_{red}^{*}(G)$ of $G$. 

We now discuss locally compact groupoid actions.  Let $G$ be a locally compact groupoid and let the pair $(X,p)$ be such that $X$ is a locally compact Hausdorff space and $p$ is a continuous open map from $X$ onto $G^{0}$.  (The pair $(X,p)$ could be, but doesn't have to be, a continuous family of manifolds.)
Form the fibered product $G*X$ of $(G,s)$ and $(X,p)$: so $G*X=\{(g,x)\in G\x X: s(g)=p(x)\}$.
Then (cf.\cite{MuReW,MuW}) $X$ is called a {\em $G$-space} if there is given a continuous map $n:G*X\to X$, $(g,x)\to gx$, that satisfies the natural algebraic axioms: i.e. $p(gx)=r(g)$,
$g_{1}(g_{2}x)=(g_{1}g_{2})x$ and $g^{-1}(gx)=x$ whenever these make sense. We then say that $G$ {\em acts} on $X$.   

The action 
of $G$ on $X$ is called {\em proper} if the map $(g,x)\to (gx,x)$ is proper
from $G*X$ into $X\x X$ (inverse image of compact is compact). The basic results
for proper locally compact group actions (\cite{Palais,Phillips}) extend to the case of proper locally
compact groupoid actions. (See \cite{jsrc}.)  Indeed, suppose that the action of $G$ on $X$ is proper. Then the space $X/G$ of orbit equivalence classes is locally compact
Hausdorff in the quotient topology, and the quotient map $Q$ is open.  The space $X$ is called 
{\em $G$-compact} if it is of the form $GB$ for some compact subset $B$ of $X$.  This is equivalent to $X/G$ being compact.  

A locally compact groupoid $G$ is called a {\em Lie groupoid} (or a {\em smooth
groupoid}) if $G$ is a manifold such that $G^{0}$ is a submanifold of $G$, the
maps $r,s:G\to G^{0}$ are submersions, and the product and inversion maps for
$G$ are smooth. Note that if $G$ is a Lie groupoid, then $G^{2}$ is naturally a
submanifold of $G\x G$ and every $G^{u}, G_{u}$ is a submanifold of $G$. (See
\cite[pp.55-56]{Paterson}.)  Every Lie groupoid $G$ admits a {\em smooth}
left Haar system $\{\la^{u}\}$. This means that in addition to satisfying the
conditions for a left Haar system, each $\la^{u}$ is a strictly
positive smooth measure on the submanifold $G^{u}$ and the $\la^{u}$'s locally
vary smoothly. (For more details, see \cite[p.61]{Paterson}.)

Invariant
integration on a Lie groupoid can be formulated canonically in terms of a
$1/2$-density bundle.  This approach has been developed by H\"{o}rmander in the context of pdo's on manifolds (\cite[p.93f.]{Hor}) and for Lie groupoids by Connes 
(\cite[p.101]{Connesbook}).  We get a smooth left Haar
system by specifying a trivialization of the density bundle.
However, since the representation theory of locally compact groupoids is usually
developed in terms of left Haar systems, it is convenient to fix a smooth
left Haar system in advance.  (All smooth left Haar systems are equivalent in the natural way.)

Continuous family groupoids form a wider class of locally compact
groupoids than the Lie groupoids.  They are defined as follows (\cite{families}).

\begin{definition}           \label{def:sfg}
A locally compact groupoid $G$ is called a {\em continuous family groupoid} if:
\bi
\item[(i)] both $(G,s), (G,r)$ are continuous families of manifolds over
$G^{0}$;
\item[(ii)] the inversion map $i:(G,r)\to (G,s)$ is a diffeomorphism;
\item[(iii)] $G*G=G^{2}$ is the fibered product of the continuous families $(G,s)$ and $(G,r)$, and the pair $(m,r)$ is a morphism of continuous families from $(G*G,t_{1})\to (G,r)$:
\nopagebreak
\begin{equation}  \label{CD:GstarG}
\begin{CD}
	G*G  @>m>>           G  \\
      @Vt_{1}VV            @VVrV \\
	G       @>r>>        G^{0}
\end{CD}
\end{equation}
\ei
\end{definition}

In the preceding definition, we note that the map $m$ is a fiber preserving map from $(G*G,t_{1})$ into $(G,r)$ since $r(gh)=r(g)$.  We note also that from (ii), the $C^{\infty,0}$ structures on $(G,s), (G,r)$ mutually determine each other through the inversion map.  

Lie groupoids are obviously continuous family groupoids. Examples of continuous family groupoids that are not Lie groupoids are given in \S 6.

\begin{definition}
The pair $(X,p)$ (or simply $X$) is called a
{\em $C^{\infty,0}$ $G$-space} if $(X,p)$ is a $G$-space that is a continuous family
of manifolds and is such that the pair $(n,r)$ is a morphism from $(G*X,t_{1})$
into $(X,p)$:
\begin{equation}  \label{CD:GstarX}
\begin{CD}
	G*X     @>n>>     X  \\
      @Vt_{1}VV        @VVpV \\
	G       @>r>>     G^{0}
\end{CD}
\end{equation}
\end{definition}

What this definition says is that under the $G$-action, each $g\in G$ determines a diffeomorphism in $C^{\infty}(X^{s(g)},X^{r(g)})$ that varies continuously with $g$.  By (\ref{CD:GstarG}), $(G,r)$ is always a proper, $C^{\infty,0}$ $G$-space under left multiplication.

The morphism condition (\ref{CD:GstarX}) can be reformulated in terms of the pair $(n',s)$: 
\begin{equation}  \label{CD:GstarrX}
\begin{CD}
	G*_{r} X     @>n'>>     X  \\
      @Vt_{1}VV           @VVpV \\
	G       @>s>>     G^{0}
\end{CD}
\end{equation}
where $n'(g,x)=g^{-1}x$ and $G*_{r} X$ is the continuous family $(G,r)*(X,p)$.

\begin{proposition}    \label{prop:n'}
The pair $(n',s)$ is a morphism.
\end{proposition}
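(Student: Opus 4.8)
The plan is to factor $n'$ through the action map $n$ by inverting the groupoid element. Define $\Phi:G*_{r}X\to G*X$ by $\Phi(g,x)=(g^{-1},x)$. This is well defined: $(g,x)\in G*_{r}X$ means $r(g)=p(x)$, so $s(g^{-1})=r(g)=p(x)$ and hence $(g^{-1},x)\in G*X$. Clearly $n'(g,x)=g^{-1}x=n(g^{-1},x)=(n\circ\Phi)(g,x)$, so $n'=n\circ\Phi$. Since $n$ is a morphism $(G*X,t_{1})\to(X,p)$ with base map $r$ (this is precisely the definition of a $C^{\infty,0}$ $G$-space, diagram (\ref{CD:GstarX})), and since the composition of two morphisms is again a morphism whose base map is the composition of the two base maps, it suffices to show that $\Phi$ is a morphism $(G*_{r}X,t_{1})\to(G*X,t_{1})$ with base map the inversion $i:G\to G$. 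Granting this, $n'=n\circ\Phi$ is a morphism with base map $r\circ i$, and since $r\circ i=s$ this is exactly the assertion of diagram (\ref{CD:GstarrX}).

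To verify that $\Phi$ is a morphism I would first record the relevant continuous family structures. By the remark that $(X_{1}*X_{2},t_{1})$ is the pull-back family $(p_{1}^{*}X_{2},p')$, the continuous family $(G*_{r}X,t_{1})$ is the pull-back $r^{*}X$ of $(X,p)$ along $r:G\to G^{0}$, and $(G*X,t_{1})$ is the pull-back $s^{*}X$ of $(X,p)$ along $s:G\to G^{0}$. Given a chart $(U,\phi)$ of $X$ with $\phi(U)=p(U)\x W$, write $\phi(x)=(p(x),w)$; then $\tilde{U}=\{(g,x):r(g)=p(x),\,x\in U\}\thicksim r^{-1}(p(U))\x W$, $(g,x)\mapsto(g,w)$, is a chart of $G*_{r}X$, and $\hat{U}=\{(h,x):s(h)=p(x),\,x\in U\}\thicksim s^{-1}(p(U))\x W$, $(h,x)\mapsto(h,w)$, is a chart of $G*X$. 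One checks $\Phi(\tilde{U})\subset\hat{U}$, and in these charts $\Phi$ is simply $(g,w)\mapsto(i(g),w)$: it is fiber preserving over $i$, and the induced fiber maps $\Phi^{g}:W\to W$ are all equal to $\text{id}_{W}$. Hence $g\mapsto\Phi^{g}$ is the constant map into $C^{\infty}(W,W)$, which is continuous, so $\Phi$ is a morphism with base map $i$ in the sense of (\ref{CD:morphism}). The only side remarks needed are the well-definedness of $\Phi$ checked above and the fact that $i$ is at least continuous on $G$, which holds since $i:(G,r)\to(G,s)$ is a diffeomorphism by Definition \ref{def:sfg}(ii).

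There is no serious obstacle in this argument: once the pull-back descriptions of $G*_{r}X$ and $G*X$ are available, $\Phi$ is the identity in local fiber coordinates, so its morphism property is immediate, and the proposition follows by composing with the already-established morphism $n$ and using $r\circ i=s$. The only mildly technical point is the bookkeeping of matching charts on the source and target of $\Phi$ (in particular checking $\Phi(\tilde{U})\subset\hat{U}$), which is routine.
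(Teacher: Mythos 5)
Your proof is correct and follows essentially the same route as the paper: the paper also factors $n'=n\circ h$ with $h=i*\text{id}$ (your $\Phi$), invokes morphism composition with base maps $r\circ i=s$, and dismisses the morphism property of $h$ as trivial because locally it has the form $(g,w)\to(g^{-1},w)$. You have merely spelled out the chart bookkeeping that the paper leaves to the reader.
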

\begin{proof}
This follows from the morphism composition:
\begin{equation}  \label{CD:2GstarrX}
\begin{CD}
	G*_{r} X  @>\text{h}>> G*X   @>n>>     X  \\
      @Vt_{1}VV                 @VVt_{1}V       @VVpV \\            
	G           @>i>>          G     @>r>>     G^{0}
\end{CD}
\end{equation}
where $h=i*\text{id}$.
To prove this, because of (\ref{CD:GstarX}), we just have to show that $(h,i)$
is a morphism.  This is trivial since locally, $h$ is of the form $(g,w)\to (g^{-1},w)$.
\end{proof}

\begin{proposition}   \label{prop:grx}
Let $(X,p), (Y,q)$ be $C^{\infty,0}$ $G$-spaces and $f\in C^{\infty,0}(X,Y)$. Define
$f': G*_{r} X\to Y$ by:
\begin{equation}
f'(g,x)=g[f(g^{-1}x)].            \label{eq:gf}
\end{equation}
Then $(f',r)\in C^{\infty,0}(G*_{r} X, Y)$.
\end{proposition}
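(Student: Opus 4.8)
The plan is to realise $f'$ as a composition of morphisms of continuous families and then to apply the composition rule recorded after (\ref{CD:morphism}). Concretely, I would factor $f'$ as
\[
G *_{r} X \;\xrightarrow{\;\Psi\;}\; G*X \;\xrightarrow{\;\text{id}*f\;}\; G*Y \;\xrightarrow{\;n_{Y}\;}\; Y ,
\]
where $\Psi(g,x)=(g,g^{-1}x)$, where $\text{id}*f$ is the morphism attached by (\ref{CD:morphism2}) to $f$ and the continuous family $(G,s)$ over $G^{0}$ (so that $(\text{id}*f)(g,y)=(g,f(y))$), and where $n_{Y}\colon G*Y\to Y$ is the action morphism supplied by the $C^{\infty,0}$ $G$-space axiom (\ref{CD:GstarX}) for $Y$. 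First I would check that each arrow is well defined on the displayed fibered product: for $(g,x)\in G*_{r}X$ we have $p(g^{-1}x)=s(g)$, so $(g,g^{-1}x)\in G*X$; and $q(f(y))=p(y)$ because $f$ covers $\text{id}_{G^{0}}$, so $(g,f(y))\in G*Y$ whenever $(g,y)\in G*X$. Chasing a point through the three arrows gives $(g,x)\mapsto g[f(g^{-1}x)]=f'(g,x)$, and along the way $q(f'(g,x))=r(g)=r(t_{1}(g,x))$, so $f'$ is continuous and fibre preserving over $r$.

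The factors $\text{id}*f$ and $n_{Y}$ are morphisms --- the first by (\ref{CD:morphism2}), the second by the hypothesis that $Y$ is a $C^{\infty,0}$ $G$-space --- covering $\text{id}_{G}$ and $r$ respectively. So the argument reduces to showing that $\Psi$ is a morphism from $(G*_{r}X,t_{1})$ to $(G*X,t_{1})$ covering $\text{id}_{G}$, and this is the step I expect to demand the most care. Here I would use that, viewed as continuous families over $G$, the pairs $(G*_{r}X,t_{1})$ and $(G*X,t_{1})$ are the pull-backs $r^{*}X$ and $s^{*}X$ of $(X,p)$ along $r$ and $s$ --- this is the identification of $(X_{1}*X_{2},t_{1})$ with $p_{1}^{*}X_{2}$ made in \S2. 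Thus for charts $V\thicksim p(V)\x W$ and $V'\thicksim p(V')\x W'$ of $X$, the sets $\{(g,x):r(g)=p(x),\ x\in V\}$ and $\{(g,y):s(g)=p(y),\ y\in V'\}$ are charts $\thicksim r^{-1}(p(V))\x W$ and $\thicksim s^{-1}(p(V'))\x W'$ over $G$; and near a fixed point, after shrinking $V,V'$ so that $g^{-1}x\in V'$ for all relevant $(g,x)$, the map $\Psi$ takes the local form $(g,w)\mapsto (g,(n')^{g}(w))$, where $(n')^{g}\in C^{\infty}(W,W')$ is the fibre component of $n'(g,x)=g^{-1}x$ in these charts. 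Since $(n',s)$ is a morphism by Proposition \ref{prop:n'}, the map $g\mapsto (n')^{g}$ is continuous from $r^{-1}(p(V))$ (suitably shrunk) into $C^{\infty}(W,W')$; but that is exactly the condition for $\Psi$ to be a morphism over $G$.

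With $\Psi$, $\text{id}*f$ and $n_{Y}$ all morphisms, the composition rule gives $(n_{Y},r)\circ(\text{id}*f,\text{id})\circ(\Psi,\text{id})=(f',r)$, so $(f',r)\in C^{\infty,0}(G*_{r}X,Y)$, as required. The one genuine obstacle is the bookkeeping of the middle paragraph: reconciling the description of $G*_{r}X$ and $G*X$ as fibered products over $G^{0}$ with their description as continuous families (pull-backs) over $G$, and choosing charts so that all three arrows are simultaneously in standard local form. Once that is done, the $C^{\infty,0}$-ness of $\Psi$ is merely a restatement of Proposition \ref{prop:n'}, in the same spirit in which the local triviality of $h=i*\text{id}$ was invoked in its proof.
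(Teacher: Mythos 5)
Your proof is correct and is essentially the paper's own argument: the paper factors $f'$ as the composition $n\circ(\text{id}*f)\circ n''$ with $n''(g,x)=(g,g^{-1}x)$, exactly your $n_{Y}\circ(\text{id}*f)\circ\Psi$, and invokes (\ref{CD:GstarX}) and (\ref{CD:morphism2}) for the last two factors. The only difference is that the paper leaves the verification that $(n'',\text{id})$ is a morphism to the reader, whereas you supply it by reducing it to Proposition \ref{prop:n'} in local charts --- a correct filling-in of that step.
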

\begin{proof}
It is left to the reader to check that $(n'',\text{id})$ is a morphism, where 
$n''(g,x)=(g,g^{-1}x)$:
\begin{equation}  \label{CD:Gstarr2X}
\begin{CD}
	G*_{r} X     @>n''>>   G*X  \\
      @Vt_{1}VV           @VVt_{1}V \\
	G          @>\text{id}>>    G
\end{CD}
\end{equation}             
Using (\ref{CD:GstarX}) and (\ref{CD:morphism2}), we obtain that  
$f'$ is a morphism by morphism composition:
\begin{equation} \label{CD:compos}
\begin{CD}
  G*_{r} X     @>n''>>   G*X  @>\text{id}*f>>    G*Y     @>n>>   Y   \\
  @Vt_{1}VV           @VVt_{1} V      @VVt_{1}V       @VVqV  \\
  G       @>\text{id}>>     G    @>\text{id}>>     G     @>r>>   G^{0}  \\
\end{CD}
\end{equation}
\end{proof}

The notion of a {\em $C^{\infty,0}$ $G$-vector bundle} $(E,\pi)$ over a $C^{\infty,0}$
$G$-space $(X,p)$ is defined in the natural way.  
We require that $(E,\pi)$ be a $C^{\infty,0}$ vector bundle that is at the same time a continuous $G$-vector bundle over $X$, and is such that $(E,p\circ \pi)$ is a $C^{\infty,0}$ $G$-space.

The vector bundle $E^{*}$ is also a $C^{\infty,0}$ $G$-vector bundle using a $C^{\infty,0}$-invariant metric 
on $E$ (Proposition~\ref{prop:hermG} below) to identify $E$ with $E^{*}$ as $C^{\infty,0}$ $G$-vector bundles.  If $E,F$ are $C^{\infty,0}$ $G$-vector bundles over $X$, then $\text{Hom}(E,F)=F\otimes E^{*}$
is also a $C^{\infty,0}$ $G$-vector bundle over $X$ with the usual action: $gA(e)=gA(g^{-1}e)$.  
 
If $X$ is a $C^{\infty,0}$ $G$-space, then there is a natural action of $G$ on
$TX$ for which the $C^{\infty,0}$ vector bundle $TX$ (\S 3) is a $C^{\infty,0}$ $G$-vector bundle. Indeed, for $x\in X^{s(g)}, \xi\in TX^{s(g)}$, the action is given by $(g,\xi)\to 
\ell_{g}'(x)\xi$, where $\ell_{g}:X^{s(g)}\to X^{r(g)}$ is defined by: $\ell_{g}x=gx$.

As for Lie groupoids, there is a natural (effectively unique) kind of left Haar
system on a continuous family groupoid $G$. This is a $C^{\infty,0}$ left Haar
system (\cite{families}).  More generally, as we shall see (Proposition~\ref{prop:muu}), such a system exists for every proper $C^{\infty,0}$ $G$-space $(X,p)$.  For the rest of this section, $G$ is a continuous family groupoid, $(X,p)$ is a proper $C^{\infty,0}$ $G$-space, and $E$ is a $C^{\infty,0}$ $G$-vector bundle over $X$.

We now formulate the notion of a {\em $C^{\infty,0}$ left Haar system}
for $X$.  In this definition, $\phi^{u}=\phi_{\mid X^{u}\cap U}$ and 
$\mu^{u}\circ \phi^{u}(E)=\mu^{u}((\phi^{u})^{-1}E)$.

\begin{definition}  \label{def:slH}
A {\em $C^{\infty,0}$} left Haar system for $(X,p)$ is a family
$\{\mu^{u}\}_{u\in G^{0}}$ of smooth positive measures on the manifolds $X^{u}$ such that:
\bi
\item[(i)] the support of each $\mu^{u}$ is $X^{u}$;
\item[(ii)] for any chart $(U,\phi)$ for $(X,p)$, with $U\thicksim
p(U)\x W$, the measure
$\mu^{u}\circ \phi^{u}$ on $W$ is equivalent to the restriction $\la^{W}$ of Lebesgue measure to $W$, and the function
$(u,w)\to (d(\mu^{u}\circ \phi^{u})/d\la^{W})(u,w)$  belongs to
$C^{\infty,0}(p(U)\x W)$;
\item[(iii)] for any $g\in G$ and $f\in C_{c}(X)$,
\begin{equation} \label{eq:invarX}
\int_{X^{s(g)}}f(gx)\,d\mu^{s(g)}(x) = \int_{X^{r(g)}}f(x)\,d\mu^{r(g)}(x).
\end{equation}
\ei
\end{definition}

The weaker notion of a {\em continuous} left Haar system for $(X,p)$ is defined in the same way except that the 
$\mu^{u}$'s are only assumed to be positive, regular Borel measures, and
the local Radon-Nikodym derivatives in (ii) are only required to exist and be continuous.  
Note that any such system is a left Haar system in the earlier sense.  The following result is \cite[Theorem 1]{families}.  
\begin{theorem}  \hspace{.1in}        \label{th:LHS}
Let $G$ be a continuous family groupoid.  Then there exists a continuous left
Haar system $\{\la^{u}\}$ on $G$.
\end{theorem}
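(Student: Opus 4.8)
The plan is to realise $\la^{u}$ as the measure attached to a strictly positive, left‑invariant, continuously varying field of densities along the $r$‑fibres of $G$, in the spirit of the $1/2$‑density description of smooth Haar systems recalled above. Applying the constructions of \S 3 to the continuous family $(G,r)$, the bundle $T^{r}G=\bigcup_{u\in G^{0}}T(G^{u})$ of tangents along the $r$‑fibres is a $C^{\infty,0}$ real vector bundle over $G$; let $A$ be its restriction to the closed set $G^{0}\subset G$, so $A_{u}=T_{u}(G^{u})$ and $A$ is a continuous $k$‑dimensional vector bundle over $G^{0}$. Pulling back Lebesgue measure in local $C^{\infty,0}$ frames and patching by a partition of unity on $G^{0}$ (positivity of a density is preserved under positive combinations) produces a strictly positive continuous field $\rho=\{\rho_{u}\}_{u\in G^{0}}$ of densities, each $\rho_{u}$ a positive density on $A_{u}=T_{u}(G^{u})$.

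For $h\in G$, left translation $L_{h^{-1}}\colon G^{r(h)}\to G^{s(h)}$, $k\mapsto h^{-1}k$, is a diffeomorphism of fibres (note $s(h^{-1})=r(h)$, $r(h^{-1})=s(h)$) carrying $h$ to the unit $s(h)$. Put $\theta_{h}:=d_{h}L_{h^{-1}}$, a linear isomorphism $T^{r}_{h}G\to T_{s(h)}(G^{s(h)})=A_{s(h)}=(s^{*}A)_{h}$. Since $(m,r)$ is a morphism of continuous families and $i$ is a diffeomorphism (Definition~\ref{def:sfg}), the map $(h,k)\mapsto h^{-1}k$ is $C^{\infty,0}$, so its fibre derivative is jointly continuous; hence $h\mapsto\theta_{h}$ is a continuous, fibrewise‑linear bundle isomorphism $T^{r}G\to s^{*}A$. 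Now define $\la^{u}$ on $G^{u}$ to be the measure with density $h\mapsto\theta_{h}^{*}\rho_{s(h)}$. As $\rho$ is strictly positive and each $\theta_{h}$ is an isomorphism, this density is continuous and strictly positive on the manifold $G^{u}$, so $\la^{u}$ is a regular Borel measure with $\mathrm{supp}\,\la^{u}=G^{u}$. Reading $\la^{u}$ off in a chart $(U,\phi)$ of $(G,r)$ with $\phi(U)=p(U)\x W$, its Radon--Nikodym derivative with respect to Lebesgue measure on $W$ is $\rho_{s(h)}$ evaluated on the $\theta_{h}$‑images of the coordinate frame, hence is jointly continuous in $(u,w)$; summing over a locally finite chart cover with a subordinate partition of unity and applying dominated convergence shows $u\mapsto\int_{G^{u}}f\,d\la^{u}$ is continuous for every $f\in C_{c}(G)$.

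It remains to verify invariance. Fix $g\in G$ and $k\in G^{s(g)}$, so $gk\in G^{r(g)}$ and $s(gk)=s(k)$. Writing $L_{(gk)^{-1}}=L_{k^{-1}}\ci L_{g^{-1}}$ and differentiating at $gk$ (using $L_{g^{-1}}(gk)=k$ and $L_{g^{-1}}\ci L_{g}=\mathrm{id}$ on $G^{s(g)}$) gives the cocycle relation $\theta_{gk}=\theta_{k}\ci(d_{k}L_{g})^{-1}$. Under the diffeomorphism $L_{g}$ the density $\theta_{k}^{*}\rho_{s(k)}$ of $\la^{s(g)}$ at $k$ transforms into $\bigl((d_{k}L_{g})^{-1}\bigr)^{*}\bigl(\theta_{k}^{*}\rho_{s(k)}\bigr)=\theta_{gk}^{*}\rho_{s(k)}=\theta_{gk}^{*}\rho_{s(gk)}$, which is exactly the density of $\la^{r(g)}$ at $gk=L_{g}(k)$. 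Hence $(L_{g})_{*}\la^{s(g)}=\la^{r(g)}$, i.e. $\int_{G^{s(g)}}f(gh)\,d\la^{s(g)}(h)=\int_{G^{r(g)}}f(h)\,d\la^{r(g)}(h)$ for all $f\in C_{c}(G)$, which is the left‑invariance required of a left Haar system.

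The step I expect to be the actual work is the joint‑continuity bookkeeping in the second paragraph: checking chart by chart (on $G$ and on $G^{0}$) that $h\mapsto\theta_{h}$, equivalently the local Radon--Nikodym derivatives of the $\la^{u}$, is continuous --- this is precisely where the hypotheses that $(m,r)$ is a morphism and that inversion is a diffeomorphism enter, through the chain rule in the fibre coordinates. It is also worth observing that $\rho$ can only be taken continuous here, since $G^{0}$ carries no smooth structure transverse to the units; this is why the construction yields merely a \emph{continuous} left Haar system, the sharper $C^{\infty,0}$ statement (Proposition~\ref{prop:muu}) needing a different argument.
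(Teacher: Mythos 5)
Your construction is correct, but note that the paper itself gives no proof of Theorem~\ref{th:LHS}: it is quoted verbatim from \cite[Theorem 1]{families}, so there is nothing in-text to compare against line by line. What you have written is the standard ``left-translate a positive density on the algebroid'' argument, i.e.\ the $C^{\infty,0}$ analogue of the construction of smooth Haar systems on Lie groupoids (cf.\ \cite[p.61]{Paterson}), and it is essentially the route the cited reference takes. Your key steps all check out: $A=T^{r}G|_{G^{0}}$ is a continuous vector bundle over the closed set $G^{0}$, so a strictly positive continuous field of densities exists by convexity and a partition of unity; the joint continuity of $h\mapsto\theta_{h}=d_{h}L_{h^{-1}}$ is exactly what the morphism property of $(n',s)$ (Proposition~\ref{prop:n'}, applied to the left action of $G$ on $(G,r)$) delivers when one evaluates the fibre derivative on the diagonal; and the cocycle identity $\theta_{gk}=\theta_{k}\circ(d_{k}L_{g})^{-1}$ gives left invariance by the density transformation rule. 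Two points are worth making explicit if you write this up: first, that the continuity of $u\mapsto\int_{G^{u}}f\,d\la^{u}$ for $f\in C_{c}(G)$ (not just $f\in C^{\infty,0}_{c}$) follows from the \emph{joint} continuity of the local Radon--Nikodym derivatives in $(u,w)$, which is what your chart computation actually produces; second, your closing observation is exactly the point the paper is making in \S 3 --- since $\rho_{s(h)}$ varies only continuously as $s(h)$ moves in $G^{0}$, this construction cannot yield smoothness of the densities along the fibres, and the genuinely new content of the paper (Proposition~\ref{prop:muu}, via an averaged $G$-isometric metric on $TX$) is needed to upgrade to a $C^{\infty,0}$ left Haar system.
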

Now let $G$ be a continuous family groupoid and let $\{\la^{u}\}$ be a continuous left Haar system on $G$.  The function $c$ of the next proposition is the groupoid version of the ``cut-off'' function of Connes and Moscovici (\cite[p.295]{CoMos}).

\begin{proposition}    \label{prop:cgx}
Let $X$ be $G$-compact.  Then
there exists a non-negative $c\in C^{\infty,0}_{c}(X)$ such that
\begin{equation}
\int_{G^{p(x)}}c(g^{-1}x)\,d\la^{p(x)}(g)=1    \label{eq:cgm2}
\end{equation}
for all $x\in X$.
\end{proposition}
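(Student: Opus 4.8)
The plan is to build $c$ by first choosing an auxiliary compactly supported function that is strictly positive on a set meeting every orbit, and then normalizing it using the invariant integral. Since $X$ is $G$-compact, write $X = GB$ for some compact $B \subset X$. Using a $C^{\infty,0}$ partition of unity on $X$ (which exists by the remarks in \S 2) together with the local chart structure, I would first produce a non-negative $\psi \in C^{\infty,0}_{c}(X)$ with $\psi > 0$ on a neighborhood of $B$; concretely, cover $B$ by finitely many charts, take bump functions subordinate to them, and sum. Then $\psi > 0$ on an open set $O \supseteq B$, and since every $x \in X$ is of the form $gb$ with $b \in B$, we have $g^{-1}x = b \in O$ for some $g \in G^{p(x)}$, so $\psi(g^{-1}x) > 0$ for at least one such $g$.

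Next I would define
\begin{equation}
h(x) = \int_{G^{p(x)}} \psi(g^{-1}x)\, d\la^{p(x)}(g).
\end{equation}
The key claims about $h$ are: (a) $h(x) > 0$ for all $x \in X$; (b) $h \in C^{\infty,0}(X)$, i.e. $h$ is continuous and restricts to a smooth function on each fiber $X^{u}$ that varies continuously in $u$; and (c) $h$ is $G$-invariant, $h(gx) = h(x)$. Claim (a) follows from the previous paragraph together with $\psi \geq 0$ and the support condition on $\la^{p(x)}$ (positivity on a nonempty open piece forces the integral to be strictly positive). Claim (c) is the change of variables $g \mapsto g'g$ in the Haar integral: using the left-invariance of $\{\la^u\}$ and $p(gx) = r(g)$, one gets $h(gx) = \int_{G^{r(g)}} \psi(\gamma^{-1}gx)\, d\la^{r(g)}(\gamma) = \int_{G^{s(g)}} \psi((g\eta)^{-1}gx)\, d\la^{s(g)}(\eta) = \int_{G^{p(x)}} \psi(\eta^{-1}x)\, d\la^{p(x)}(\eta) = h(x)$. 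For claim (b), I would work locally: pick a chart for $(X,p)$ around $x$ and a compatible family of charts for $(G,r)$; since $\psi$ has compact support, the integral over $G^{p(x)}$ is really an integral over a fixed compact set, and using the $C^{\infty,0}$ dependence of the Haar densities on the base (Definition of a $C^{\infty,0}$ Haar system, or here just the continuous version from Theorem~\ref{th:LHS}) together with the $C^{\infty,0}$ nature of the action map $n$ from \eqref{CD:GstarX}, differentiation under the integral sign in the fiber variable gives smoothness along fibers, and continuity in the base parameter follows from dominated convergence. Here one has a choice: since the statement only assumes the continuous Haar system exists, one could prove $h$ is merely continuous and $G$-invariant and positive, then \emph{replace} it — but in fact smoothness along the fibers of $X$ is what is needed, and that comes from the smoothness of $g \mapsto g^{-1}x$ along fibers even with only a continuous Haar system, because the invariance integral is a fiber integral in the $G$-variable while the smoothness we want is in the $X^{u}$-variable.

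Finally, set
\begin{equation}
c(x) = \frac{\psi(x)}{h(p^{\#}(x))},
\end{equation}
or more precisely $c(x) = \psi(x)/h(x)$; since $h$ is $G$-invariant, positive, and (by (b)) $C^{\infty,0}$, the quotient $\psi/h$ is again non-negative, compactly supported (same support as $\psi$), and $C^{\infty,0}$. Then
\begin{equation}
\int_{G^{p(x)}} c(g^{-1}x)\, d\la^{p(x)}(g) = \int_{G^{p(x)}} \frac{\psi(g^{-1}x)}{h(g^{-1}x)}\, d\la^{p(x)}(g) = \frac{1}{h(x)}\int_{G^{p(x)}} \psi(g^{-1}x)\, d\la^{p(x)}(g) = 1,
\end{equation}
using $h(g^{-1}x) = h(x)$ to pull the factor out of the integral. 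The main obstacle I anticipate is claim (b): verifying carefully that $h$ is genuinely $C^{\infty,0}$ on $X$ requires combining the local triviality of $(G,r)$, the compact support of $\psi$ (so the domain of integration is uniformly compact), the $C^{\infty,0}$ regularity of the action in \eqref{CD:GstarX}, and the regularity of the Haar densities, and then justifying differentiation under the integral sign fiberwise plus continuity in the transverse parameter. The positivity and invariance are comparatively routine once the partition-of-unity construction of $\psi$ and the left-invariance computation are in place.
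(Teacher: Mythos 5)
Your proposal is correct and follows essentially the same route as the paper: choose a non-negative $C^{\infty,0}_{c}$ bump that is strictly positive on a compact set $B$ with $GB=X$, average it over the groupoid to obtain a strictly positive, $G$-invariant, $C^{\infty,0}$ function, and divide (the paper writes $\xi$, $\eta$ and $c=\xi/\eta$ where you write $\psi$, $h$, $c=\psi/h$, leaving the invariance of $\eta$ implicit where you spell it out). The only point to make explicit in your step (b) is that confining the $g$-integration to a fixed compact subset of $G$ uses the \emph{properness} of the action together with the compact support of $\psi$, not the compact support alone, exactly as the paper does before introducing the chart $V$ and the function $\psi\in C_{c}(V)$.
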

\begin{proof}
A $C_{c}(X)$-version of this result is given in \cite[Proposition 3.4]{jsrc} and this is simply modified as follows.  Let $X=GC$ where $C\in \mathcal{C}(X)$.  Using a $C_{c}^{\infty,0}$ partition of unity, there exists  $\xi\in C_{c}^{\infty,0}(X)$ such that $\xi\geq 0$, $\xi(x)>0$ for all $x\in C$.  Define $\eta:X\to \C$ by: 
$\eta(x)=\int_{G^{p(x)}}\xi(g^{-1}x)\,d\la^{p(x)}(g)$.  Then $\eta(x)>0$ for all $x\in X$.
We now show that $\eta\in C^{\infty,0}(X)$.

To this end, let $U\thicksim p(U)\x W$ be a chart in $X$, where $U$ is relatively compact.
Since the support of $\xi$ is compact and the $G$-action is proper, the set 
$\{g\in G: \xi(g^{-1}x)\ne 0, x\in U\}$ is contained in some $D\in \mathcal{C}(G)$.  Another partition of unity argument in $G$ gives that we can take 
$D\subset V\thicksim r(V)\x W'$, a chart in $G$.  Let $\psi\in C_{c}(V)$ be such that 
$0\leq \psi\leq 1$, $\psi=1$ on $D$.  From (\ref{CD:GstarrX}), the function
$g\to \xi(g^{-1}\cdot)$ is $C^{\infty}$-continuous.  Switching to coordinates, we can write
\[ \eta(u,w)=\int_{W} \psi(u,w')\xi(u,w',w)f(u,w')\,dw'                      \]
where $\psi, f$ are continuous and $(u,w')\to \xi(u,w',\cdot)$ is $C^{\infty}$-continuous.
Elementary analysis then gives that $\eta\in C^{\infty,0}(X)$.  Take $c=\xi/\eta$.
\end{proof}

We now discuss the $C^{\infty,0}$-version of a {\em $G$-partition of unity}.  Let 
$\{U_{\al}\}_{\al\in A}$ be a collection of open
subsets of $X$. A {\em $G$-partition of unity of $X$ subordinate to
$\{U_{\al}\}_{\al\in A}$} is a collection of
$C_{c}^{\infty,0}$ non-negative functions
$f_{\de}$ on $X$ ($\de\in \De$), each with compact support in some $U_{\al}$, such that for every $x\in X$, we
have
\begin{equation}  \label{eq:sumfga}
\sum_{\de}\int_{G^{p(x)}}f_{\de}(g^{-1}x)\,d\la^{p(x)}(g)=1
\end{equation}
where the sum is locally finite.  The next proposition asserts the existence of a $G$-partition
of unity.  The locally compact groupoid version of this is given in \cite[Proposition
3.3]{jsrc}. (Its proof parallels the locally compact group version of Phillips 
in \cite[Lemma 2.6]{Phillips}.)  The proof of the proposition follows that of the locally compact groupoid version, taking the functions
$u_{x,\bt}$ used in that proof to be $C^{\infty,0}$.

\begin{proposition}      \label{prop:aver}
Let $\{U_{\al}\}_{\al\in A}$ be a
collection of open subsets of $X$ such that the family of sets $GU_{\al}$ covers $X$. Then
there exists a $G$-partition of unity subordinate to the collection $\{U_{\al}\}_{\al\in A}$.
\end{proposition}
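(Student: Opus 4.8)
The plan is to carry over the averaging argument of Phillips (\cite[Lemma 2.6]{Phillips}), in the groupoid form of \cite[Proposition 3.3]{jsrc}, taking every auxiliary function to be $C^{\infty,0}$; as the text preceding the statement already indicates, the one genuinely new point is the $C^{\infty,0}$-regularity of the averaged functions, and that is dealt with exactly as in the proof of Proposition~\ref{prop:cgx}. First I would produce, for each $x\in X$, a local bump: since the sets $GU_{\al}$ cover $X$, choose $\al=\al(x)$ and $g_{x}\in G^{p(x)}$ with $g_{x}^{-1}x\in U_{\al(x)}$, and then, using a $C^{\infty,0}_{c}$ partition of unity on $X$ (which exists by \S2--\S3), choose $v_{x}\in C^{\infty,0}_{c}(X)$ with $v_{x}\geq 0$, $\operatorname{supp}v_{x}\subset U_{\al(x)}$ and $v_{x}(g_{x}^{-1}x)>0$.

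Next I would average: set $w_{x}(y)=\int_{G^{p(y)}}v_{x}(g^{-1}y)\,d\la^{p(y)}(g)$. Invariance of the Haar system makes $w_{x}$ a $G$-invariant function on $X$, it is non-negative, and $w_{x}(x)>0$, so $\{w_{x}>0\}$ is a $G$-invariant open neighbourhood of $x$; write it as $Q^{-1}(O_{x})$ for an open set $O_{x}\subset X/G$. The claim $w_{x}\in C^{\infty,0}(X)$ is then proved verbatim as in Proposition~\ref{prop:cgx}: in a relatively compact chart of $X$, properness of the action confines the $g$'s that contribute to the integral to a relatively compact subset of $G$, hence to a chart $V\thicksim r(V)\x W'$; inserting a cut-off $\psi\in C_{c}(V)$ equal to $1$ there and passing to local coordinates reduces the assertion to differentiating a parametrised integral, which elementary analysis settles.

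To globalise, recall that $X/G$ is second countable, locally compact and Hausdorff, hence paracompact, so the open cover $\{O_{x}\}_{x\in X}$ admits a countable, locally finite refinement; this yields a countable family $\{x_{i}\}$ with $\{O_{x_{i}}\}$ locally finite and still covering $X/G$. Pulling back along the continuous map $Q$, the sets $\{w_{x_{i}}>0\}$ form a locally finite open cover of $X$, so $\Phi:=\sum_{i}w_{x_{i}}$ is a well-defined, $G$-invariant element of $C^{\infty,0}(X)$ with $\Phi>0$ everywhere, whence $1/\Phi\in C^{\infty,0}(X)$ as well. Finally I would put $f_{i}=v_{x_{i}}/\Phi$: each $f_{i}\in C^{\infty,0}_{c}(X)$ is non-negative with $\operatorname{supp}f_{i}\subset U_{\al(x_{i})}$, and since $\Phi$ is $G$-invariant,
\[ \sum_{i}\int_{G^{p(x)}}f_{i}(g^{-1}x)\,d\la^{p(x)}(g)=\frac{1}{\Phi(x)}\sum_{i}\int_{G^{p(x)}}v_{x_{i}}(g^{-1}x)\,d\la^{p(x)}(g)=\frac{1}{\Phi(x)}\sum_{i}w_{x_{i}}(x)=1, \]
the sum being locally finite; so $\{f_{i}\}_{i}$ is the required $G$-partition of unity subordinate to $\{U_{\al}\}_{\al\in A}$.

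The hard part will be the $C^{\infty,0}$-regularity of the averaged functions — of $w_{x}$, and then of $\Phi$ and of $1/\Phi$; everything else is routine point-set topology. This regularity is precisely the difficulty already overcome in Proposition~\ref{prop:cgx}, where properness is used to trade the non-compactly-supported integrand $g\mapsto v_{x}(g^{-1}y)$ for a compactly supported one on a chart of $G$, after which the $C^{\infty,0}$ property follows by differentiating under the integral sign in local coordinates. The only remaining bookkeeping is that the refinement can be chosen locally finite as a family of subsets of $X$ (immediate from the $G$-invariance of the $w_{x_{i}}$, since $\{w_{x_{i}}>0\}=Q^{-1}(O_{x_{i}})$ and $Q$ is continuous) and that dividing by the nowhere-vanishing $C^{\infty,0}$ function $\Phi$ keeps us inside $C^{\infty,0}$.
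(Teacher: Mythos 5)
Your architecture is exactly the one the paper intends (it simply defers to \cite[Proposition 3.3]{jsrc} and Phillips's Lemma 2.6, with the bump functions taken $C^{\infty,0}$), and the genuinely analytic points are handled correctly: the averaged functions $w_{x}$ are $G$-invariant, positive at $x$, and lie in $C^{\infty,0}(X)$ by the properness/cut-off argument of Proposition~\ref{prop:cgx}; dividing the $v_{x_{i}}$ by the invariant, nowhere-vanishing $\Phi$ then gives (\ref{eq:sumfga}) by the computation you display.

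There is, however, one step that fails as written: the globalisation. Paracompactness of $X/G$ gives a locally finite open \emph{refinement} of the cover $\{O_{x}\}$, i.e.\ sets $P_{j}\subset O_{x_{j}}$ with $\{P_{j}\}$ locally finite; it does \emph{not} yield a locally finite subfamily $\{O_{x_{i}}\}$ of the original cover, and in general no such subfamily exists (an open cover of a paracompact space need not admit a locally finite subcover). The justification you offer for the ``bookkeeping'' --- that $\{w_{x_{i}}>0\}=Q^{-1}(O_{x_{i}})$ and $Q$ is continuous --- only transfers local finiteness from $X/G$ to $X$ once you have it in $X/G$; it does not produce it, and without it neither $\Phi=\sum_{i}w_{x_{i}}$ nor the local finiteness of the sum in (\ref{eq:sumfga}) is established. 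The repair is standard and uses the freedom you have in choosing $v_{x}$: note that $O_{x}=Q(\{v_{x}>0\})$, so $O_{x}$ can be forced inside any prescribed open neighbourhood of $Q(x)$ by shrinking $\operatorname{supp}v_{x}$. Exhaust the $\si$-compact space $X/G$ by compacta $K_{n}\subset\operatorname{int}K_{n+1}$; for $Q(x)$ in the compact annulus $K_{n}\setminus\operatorname{int}K_{n-1}$ choose $v_{x}$ with $O_{x}\subset\operatorname{int}K_{n+1}\setminus K_{n-2}$, cover each annulus by finitely many such $O_{x}$, and collect over $n$. The resulting countable family $\{O_{x_{i}}\}$ is then locally finite because the bands $\operatorname{int}K_{n+1}\setminus K_{n-2}$ are, and the rest of your argument goes through verbatim. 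This is precisely the device in Phillips's proof that the paper is invoking.
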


The following result is a $C_{c}^{\infty,0}$-version of \cite[Lemma 3.1]{jsrc}. It is proved as in Proposition~\ref{prop:cgx}.

\begin{proposition}        \label{prop:cont}
Let $f\in C^{\infty,0}(G*_{r}X)$.
Suppose that for all
$C\in \mathcal{C}(X)$ and with $W_{C}=\{(g,x)\in G*_{r} X: x\in C\}$, the
restriction $f_{\mid W_{C}}\in C_{c}(W_{C})$.
Then $F\in C^{\infty,0}(X)$, where
\begin{equation}      \label{eq:f'F}
F(x)=\int f(g,x)\,d\la^{p(x)}(g).
\end{equation}
\end{proposition}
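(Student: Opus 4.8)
The plan is to reduce the statement to the local analysis already carried out in the proof of Proposition~\ref{prop:cgx}. Since being $C^{\infty,0}$ is a local condition on $X$, it suffices to fix a relatively compact chart $(U,\phi)$ for $(X,p)$ with $U\thicksim p(U)\x W$ and show that the restriction of $F$ to $U$ lies in $C^{\infty,0}(p(U)\x W)$. So first I would localize: replace $f$ by its restriction to the set $G*_{r}(\ov{U})$, and observe that since $\ov{U}\in\mathcal{C}(X)$, the hypothesis gives $f_{\mid W_{\ov{U}}}\in C_{c}(W_{\ov{U}})$; in particular the $g$-support of $f(g,x)$ stays inside a fixed compact set $D\in\mathcal{C}(G)$ as $x$ ranges over $U$.

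Next I would cover that compact set $D$ by finitely many charts $V\thicksim r(V)\x W'$ of $(G,r)$ and choose, via a $C^{\infty,0}$ partition of unity on $G$, functions $\psi\in C_{c}^{\infty,0}(V)$ with $\sum\psi=1$ on a neighborhood of $D$; this lets me write $F$ near $U$ as a finite sum of integrals of the form $\int \psi(g)f(g,x)\,d\la^{p(x)}(g)$, each with $g$ confined to a single chart $V$ and $x$ confined to $U$. In coordinates on $V$ and on $U$, using that $\{\la^{u}\}$ is a continuous left Haar system so that $d\la^{u}\circ(\phi_V^{u})^{-1}$ has a continuous density against Lebesgue measure on $W'$ (Definition~\ref{def:slH} applied to $(G,r)$, or rather its analogue for the given continuous left Haar system), each such term becomes
\begin{equation}
    F(u,w)=\int_{W'}\psi(u,w')\,\tilde f(u,w',w)\,h(u,w')\,dw',
\end{equation}
where $h$ is continuous and, crucially, $(u,w')\to\tilde f(u,w',\cdot)$ is continuous into $C^{\infty}(W)$ because $f\in C^{\infty,0}(G*_{r}X)$ and the fibered-product chart structure on $G*_{r}X$ factors the two coordinate blocks. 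The compact support in $w'$ coming from $\psi$ makes the integral over a fixed compact set.

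Finally I would invoke the same ``elementary analysis'' step used at the end of Proposition~\ref{prop:cgx}: differentiation under the integral sign in the $w$-variables, combined with uniform continuity on compact sets of the integrand and its $w$-derivatives, shows $(u,w)\to F(u,w)$ is continuous into $C^{\infty}(W)$, i.e. $F_{\mid U}\in C^{\infty,0}(p(U)\x W)$. Summing the finitely many chart contributions and letting $(U,\phi)$ vary over an atlas of $X$ gives $F\in C^{\infty,0}(X)$. The main obstacle — and the only place any real work is needed — is the coordinate bookkeeping that produces the displayed integral formula with the $w'$-support compact and the $w$-dependence genuinely $C^{\infty,0}$; but this is exactly the manipulation performed in Proposition~\ref{prop:cgx}, with the properness-and-compact-support argument there replaced here by the hypothesis $f_{\mid W_{C}}\in C_{c}(W_{C})$, so I would simply say ``as in the proof of Proposition~\ref{prop:cgx}'' and point to the one substitution that differs.
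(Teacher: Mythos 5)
Your proposal is correct and follows essentially the same route as the paper, whose entire proof is the remark that the proposition ``is proved as in Proposition~\ref{prop:cgx}'': you reproduce that localization-and-coordinates argument, correctly identifying that the properness-plus-compact-support step of Proposition~\ref{prop:cgx} is here replaced by the hypothesis $f_{\mid W_{C}}\in C_{c}(W_{C})$. The only cosmetic difference is that you cover the compact $g$-support by finitely many charts of $(G,r)$ with a partition of unity, where the paper confines it to a single chart $V\supset D$.
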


The Lie groupoid version of the following proposition (with additional assumptions on $G$ and $(X,p)$) is given in
\cite[Proposition 4.2]{jsrc}. (The locally compact group
version is given in \cite[pp.40-41]{Phillips}.)  We require first a lemma.

\begin{lemma} \label{lemma:lan'}
Let $\ga'$ be a $C^{\infty,0}$-hermitian metric on a $C^{\infty,0}$ $G$-vector bundle $E$ (\S 2) and
$s_{1}, s_{2}:X\to E$ be $C^{\infty,0}$-sections of $E$. Let 
$F:G*_{r} X\to \C$ be given by:
\[      F(g,x)=\ga'(g^{-1}s_{1}(x),g^{-1}s_{2}(x)).         \]
Then $F$ is a morphism on $(G*_{r} X,t_{1})$.
\end{lemma}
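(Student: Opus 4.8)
The plan is to reduce the statement to the morphism results already established, principally Proposition~\ref{prop:grx} and Proposition~\ref{prop:n'}, together with the fact (from \S 2) that a $C^{\infty,0}$-hermitian metric on a $C^{\infty,0}$ vector bundle is itself a morphism in the sense of diagram (\ref{CD:hermitian}). The key observation is that $F$ factors as a composition of $C^{\infty,0}$ maps: starting from $G*_r X$, we form the pair of sections pulled back and acted on by $g^{-1}$, landing in $E\oplus E$ over $X$, and then we apply $\ga'$ to land in $\C$. So the proof is really an exercise in assembling a commuting diagram of morphisms.

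First I would make precise the map $(g,x)\mapsto g^{-1}s_i(x)$. For a fixed $C^{\infty,0}$-section $s_i:X\to E$, note that $E$ is a $C^{\infty,0}$ $G$-vector bundle, hence a $C^{\infty,0}$ $G$-space, so $s_i$ being a $C^{\infty,0}$ map and the $G$-action on $E$ being $C^{\infty,0}$ means the assignment $(g,x)\mapsto g^{-1}(s_i(x))$ is a $C^{\infty,0}$ map from $G*_r X$ into $E$; concretely this is the composition of $(g,x)\mapsto (g^{-1},s_i(x))$ (a morphism, since $i$ is a diffeomorphism and $s_i$ is a morphism, as in the proof of Proposition~\ref{prop:n'}) with the action map $G*E\to E$, which is a morphism by the definition of a $C^{\infty,0}$ $G$-vector bundle. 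Alternatively, one can invoke Proposition~\ref{prop:grx} directly with $Y=E$ and the section $s_i$ playing the role of $f$, observing that $g[s_i(g^{-1}x)]$ for the map $x\mapsto g^{-1}s_i(x)$ differs from what we want only by a relabelling; the cleanest route is to note that $g^{-1}s_i(x)$ lies in $E^{x}$ (since $s_i(x)\in E^{p^{-1}\cdots}$ — more precisely $s_i(gx)\in E^{gx}$ so applying $g^{-1}$ to $s_i$ evaluated appropriately stays over the right base point), so that the pair $\big(g^{-1}s_1(x),\,g^{-1}s_2(x)\big)$ defines a $C^{\infty,0}$ map $G*_r X\to E\oplus E$ whose composition with the bundle projection is $(g,x)\mapsto x$ followed by $t_1$, i.e.\ it is fiber-preserving over $G*_r X\to X$.

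Next I would combine this with the morphism property of $\ga'$. Since $\ga'$, viewed as a map $E\oplus E\to \C$, is a morphism (diagram (\ref{CD:hermitian})), and composition of morphisms is again a morphism (with $(f'\circ f)^t=(f')^{q(t)}\circ f^t$), the composite $(g,x)\mapsto \ga'\big(g^{-1}s_1(x),g^{-1}s_2(x)\big)=F(g,x)$ is a morphism from $(G*_r X,t_1)$ into $\C$, which is exactly the claim. I would write out the single assembled commuting diagram analogous to (\ref{CD:compos}), with top row $G*_r X \to E\oplus E \to \C$ and the vertical maps being $t_1$, the bundle projection, and the map to the one-point space.

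The only mild subtlety — and the step I would be most careful about — is the bookkeeping of base points: one must check that $g^{-1}s_i(x)$ genuinely lands in the fiber $E^{x}$ (so that $\ga'$ can be applied to the pair in a single fiber) rather than in $E^{g^{-1}x}$ or $E^{gx}$. This is a matter of tracing through the definition of the induced $G$-action on sections versus on points of $E$; once the indices are pinned down the rest is purely formal, reducing to the already-proved composition-of-morphisms machinery. No hard analysis is involved; the content is entirely in recognizing $F$ as a composition of maps each of which has been shown to be $C^{\infty,0}$.
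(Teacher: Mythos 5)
Your proposal is correct and follows essentially the same route as the paper: write $\beta_i(g,x)=g^{-1}s_i(x)$ as the composition of $\text{id}*s_i$ with the morphism $n'$ of Proposition~\ref{prop:n'}, assemble the pair into a map $G*_{r}X\to E\oplus E$, and compose with $\ga'$ regarded as a morphism via (\ref{CD:hermitian}). The one slip in your base-point bookkeeping is that $g^{-1}s_i(x)$ lands in $E^{g^{-1}x}$, not $E^{x}$; since both components land in that same fiber, $\ga'$ still applies and nothing else in the argument changes.
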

\begin{proof} Define, for $i=1,2$, the map $\bt_{i}:G*_{r} X\to E$ by:
$\bt_{i}(g,x)=g^{-1}s_{i}(x)$.  Since 
$\bt_{i}=n'\circ (\text{id }* s_{i})$, it follows using  
(\ref{CD:morphs'}) and (\ref{CD:morphism2}) that each $(\bt_{i},s)$ is a morphism:
\[
\begin{CD}
     G*_{r} X  @>>\text{id}*s_{i}>  G*_{r} E   @>>n'> E \\
     @Vt_{1} VV          @Vt_{1}VV             @VV p\circ \pi V \\
     G            @>\text{id}>>        G      @>s>>       G^{0}
\end{CD}
\]
Let $\al:G*_{r} X\to E\oplus E$ be given by: $\al(g,x)=(\bt_{1}(g,x),\bt_{2}(g,x))$. 
Then $F$ is a morphism being the composition of two morphisms 
(see (\ref{CD:hermitian})):
\[
\begin{CD}
     G*_{r} X  @>\al>>   E\oplus E   @>\ga'>>       \C \\
     @Vt_{1} VV               @VVV                @VVV \\
     G            @>s>>        G^{0}      @>>>       *
\end{CD}
\]
\end{proof}

\begin{proposition}   \label{prop:hermG}
There exists a $C^{\infty,0}$-hermitian metric $\ga$ on $E$
which is $G$-isometric.
\end{proposition}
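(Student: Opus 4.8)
The strategy is the standard averaging argument, now in the $C^{\infty,0}$ groupoid setting: start with any $C^{\infty,0}$-hermitian metric on $E$ (which exists by the construction in \S 2), integrate its $G$-translates against the cut-off function $c$ produced by Proposition~\ref{prop:cgx}, and check that the result is $C^{\infty,0}$ and $G$-isometric. Since $X$ is $G$-compact, the cut-off function $c \in C^{\infty,0}_{c}(X)$ with $\int_{G^{p(x)}} c(g^{-1}x)\, d\la^{p(x)}(g) = 1$ is available, and this is exactly what the averaging needs.

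First I would fix a $C^{\infty,0}$-hermitian metric $\ga'$ on $E$ using the \S 2 construction (local $C^{\infty,0}$-frames and a $C^{\infty,0}$ partition of unity). Then, for $e_{1}, e_{2} \in E^{x}$ with $x \in X$, define
\begin{equation}
\ga^{x}(e_{1}, e_{2}) = \int_{G^{p(x)}} c(g^{-1}x)\,\ga'^{\,g^{-1}x}\!\bigl(g^{-1}e_{1}, g^{-1}e_{2}\bigr)\, d\la^{p(x)}(g).
\end{equation}
For fixed $x$ this is a finite integral (the integrand is supported where $g^{-1}x \in \operatorname{supp} c$, a set that is compact in $G^{p(x)}$ by properness), it is sesquilinear, and positivity/nondegeneracy follow because $c \geq 0$, $\ga'$ is positive definite, and the total mass is $1$; so each $\ga^{x}$ is a genuine hermitian inner product on $E^{x}$. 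The $G$-invariance is the usual change of variables: replacing $x$ by $hx$ and substituting $g \mapsto hg$ in the integral, the left-invariance of $\{\la^{u}\}$ together with $g^{-1}h^{-1}(he_{i}) = g^{-1}e_{i}$ gives $\ga^{hx}(he_{1}, he_{2}) = \ga^{x}(e_{1}, e_{2})$, which is precisely $G$-isometry.

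The one genuinely non-formal point is showing $\ga$ is $C^{\infty,0}$, i.e.\ that for any $C^{\infty,0}$-sections $s_{1}, s_{2}$ of $E$ the function $x \mapsto \ga^{x}(s_{1}(x), s_{2}(x))$ lies in $C^{\infty,0}(X)$. Here I would apply Lemma~\ref{lemma:lan'}: the map $(g,x) \mapsto \ga'^{\,g^{-1}x}(g^{-1}s_{1}(x), g^{-1}s_{2}(x))$ is a morphism on $(G *_{r} X, t_{1})$, hence so is $(g,x) \mapsto c(g^{-1}x)\,\ga'^{\,g^{-1}x}(g^{-1}s_{1}(x), g^{-1}s_{2}(x))$ after noting $(g,x) \mapsto c(g^{-1}x)$ is $C^{\infty,0}$ on $G *_{r} X$ via (\ref{CD:GstarrX}) and the product of two $C^{\infty,0}$ scalar functions is $C^{\infty,0}$. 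Then I check the hypothesis of Proposition~\ref{prop:cont}: for $C \in \mathcal{C}(X)$ the restriction of this integrand to $W_{C}$ has compact support, because $\operatorname{supp} c$ compact and properness of the action force $\{g : c(g^{-1}x) \neq 0,\ x \in C\}$ to be relatively compact in $G$. Proposition~\ref{prop:cont} then delivers $x \mapsto \ga^{x}(s_{1}(x), s_{2}(x)) = \int f(g,x)\, d\la^{p(x)}(g) \in C^{\infty,0}(X)$, which is the definition of $\ga$ being a $C^{\infty,0}$-hermitian metric.

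**Main obstacle.** The formal parts—sesquilinearity, positivity, the change-of-variables computation for $G$-isometry—are routine. The real content, and the step I'd expect to write most carefully, is the $C^{\infty,0}$-regularity: assembling the right morphism via Lemma~\ref{lemma:lan'}, handling the cut-off factor $c(g^{-1}x)$ correctly as a $C^{\infty,0}$ function on the pulled-back family $G *_{r} X$, and verifying the compact-support hypothesis of Proposition~\ref{prop:cont} from $G$-compactness and properness. Everything else reduces to bookkeeping with the diagrams (\ref{CD:GstarrX}), (\ref{CD:morphism2}), and (\ref{CD:morphs'}) already set up in the excerpt.
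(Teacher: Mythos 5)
Your argument is essentially the right averaging argument, and the part you flag as the real content --- the $C^{\infty,0}$-regularity via Lemma~\ref{lemma:lan'}, the morphism property of $(g,x)\mapsto c(g^{-1}x)$ from (\ref{CD:GstarrX}), and the compact-support check feeding into Proposition~\ref{prop:cont} --- is exactly how the paper handles that step. But there is a genuine gap at the very first move: you average against the cut-off function $c$ of Proposition~\ref{prop:cgx}, and that proposition requires $X$ to be $G$-compact. Proposition~\ref{prop:hermG} is stated under the standing hypotheses of the section, which assume only that $(X,p)$ is a \emph{proper} $C^{\infty,0}$ $G$-space, not a $G$-compact one; and it is used in that generality. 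In particular, Proposition~\ref{prop:muu} applies it with $E=TX$ for an arbitrary proper $G$-space, and Corollary~\ref{cor:slhs} then takes $X=(G,r)$ acting on itself by left multiplication, for which $X/G\cong G^{0}$ need not be compact. So your proof establishes the proposition only under an extra hypothesis that the later applications cannot afford.

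The paper's fix is to replace the single cut-off function by a $G$-partition of unity $\{f_{\de}\}$ subordinate to the one-set cover $\{X\}$ (Proposition~\ref{prop:aver}), which exists for any proper action with no $G$-compactness needed, and to set
\[
\ga(\xi,\eta)=\sum_{\de}\int_{G^{p(x)}}f_{\de}(g^{-1}x)\,\ga'(g^{-1}\xi,g^{-1}\eta)\,d\la^{p(x)}(g),
\]
where the sum is locally finite. Each summand is handled exactly as in your regularity argument (each $f_{\de}$ is compactly supported, so Proposition~\ref{prop:cont} applies term by term), local finiteness gives the $C^{\infty,0}$ property of the sum, positive definiteness follows from $\sum_{\de}\int f_{\de}(g^{-1}x)\,d\la^{p(x)}(g)=1$, and the $G$-isometry is the same change of variables $g\mapsto hg$ you describe. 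In short: swap $c$ for $\{f_{\de}\}$ and your proof becomes the paper's.
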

\begin{proof}
Let $\ga'$ be a $C^{\infty,0}$-hermitian metric on $E$ and
$\{f_{\de}\}$ be a $G$-partition of unity for $X$ (so $\{U_{\al}\}_{\al\in A}=\{X\}$).  Let $s_{1}, s_{2}\in C^{\infty,0}(X,E)$.
Given $\de\in \De, x\in X$, the function
\[  f(g,x) =
f_{\de}(g^{-1}x)\ga'(g^{-1}s_{1}(x),g^{-1}s_{2}(x))  \]
satisfies the conditions of Proposition~\ref{prop:cont} using Proposition~\ref{prop:grx}
and Lemma~\ref{lemma:lan'}.  So the map
$x\to \int_{G^{p(x)}}f(g,x)\,d\la^{p(x)}(g)$ is 
$C^{\infty,0}$.   The local finiteness of the $\{f_{\de}\}$ then yields that $\ga$ is a 
$C^{\infty,0}$-hermitian metric where
\begin{equation}
  \ga(\xi,\eta)=
\sum_{\de}\int_{G^{p(x)}}f_{\de}(g^{-1}x)\ga'(g^{-1}\xi,g^{-1}\eta)\,d\la^{p(x)}(g).    \label{eq:lan'}
\end{equation}
To prove that $\ga$ is $G$-isometric, one argues:
\[\ga(h\xi,h\eta)=
\sum_{\de}\int f_{\de}(g^{-1}hx)\ga'(g^{-1}h\xi,g^{-1}h\eta)\,d\la^{r(h)}(g)=\ga(\xi,\eta).  \]
\end{proof}

\begin{proposition}   \label{prop:muu}
There exists a $C^{\infty,0}$ left Haar system $\{\mu^{u}\}$ for $(X,p)$.
\end{proposition}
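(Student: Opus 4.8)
The plan is to imitate the averaging argument of Proposition~\ref{prop:hermG}, applied this time to a reference fibrewise density in place of a hermitian metric, and then to check the three requirements of Definition~\ref{def:slH}. First I would produce a family $\{\nu^{u}\}_{u\in G^{0}}$ of smooth positive measures on the manifolds $X^{u}$ satisfying (i) and (ii) of Definition~\ref{def:slH} but not necessarily the invariance condition (iii), exactly as $C^{\infty,0}$-hermitian metrics are built in \S 2: choose an atlas $\{(U_{\al},\phi_{\al})\}$ of relatively compact charts, $\phi_{\al}(U_{\al})=p(U_{\al})\x W_{\al}$, with a $C^{\infty,0}$ partition of unity $\{\chi_{\al}\}$ subordinate to it, and set
\[
\nu^{u}(f)=\sum_{\al}\int_{W_{\al}}\chi_{\al}(\phi_{\al}^{-1}(u,w))\,f(\phi_{\al}^{-1}(u,w))\,dw ,\qquad f\in C_{c}(X^{u}).
\]
In any chart $(U,\phi)$ the density of $\nu^{u}\circ\phi^{u}$ against Lebesgue measure is the locally finite sum of the $C^{\infty,0}$ functions $(\chi_{\al}\circ\phi_{\al}^{-1})\,|\det D(\phi_{\al}\circ\phi^{-1})|$, which is $C^{\infty,0}$ in $(u,w)$ and strictly positive since $\sum_{\al}\chi_{\al}=1$, so (i) and (ii) hold for $\{\nu^{u}\}$.

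Next, each $g\in G$ gives a diffeomorphism $\ell_{g}:X^{s(g)}\to X^{r(g)}$, and I would record its fibrewise Jacobian $J(g,x)>0$ ($x\in X^{r(g)}$) relative to the $\nu^{u}$'s, defined by $\int_{X^{s(g)}}\Phi(gy)\,d\nu^{s(g)}(y)=\int_{X^{r(g)}}\Phi(x)\,J(g,x)\,d\nu^{r(g)}(x)$. Read in local charts compatible with the action morphism (\ref{CD:GstarX}), $J(g,x)$ is the product of the Jacobian, in the fibre direction, of the $C^{\infty,0}$ map $(g,x)\mapsto gx$ with the $C^{\infty,0}$ densities of $\{\nu^{u}\}$ from the previous step, so $J\in C^{\infty,0}(G*_{r}X)$. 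Now choose a $G$-partition of unity $\{f_{\de}\}$ for $X$ subordinate to $\{X\}$ (Proposition~\ref{prop:aver}) and put, for $f\in C_{c}(X^{u})$,
\[
\mu^{u}(f)=\sum_{\de}\int_{G^{u}}\int_{X^{s(g)}}f_{\de}(y)\,f(gy)\,d\nu^{s(g)}(y)\,d\la^{u}(g).
\]
Properness of the $G$-action confines, for fixed $f$ and each $\de$, the contributing $g$ and $y$ to compact sets, and local finiteness of $\{f_{\de}\}$ leaves only finitely many nonzero terms, so $\mu^{u}$ is a well-defined positive measure on $X^{u}$; substituting $x=gy$ in the inner integral and using $J$ gives $d\mu^{u}=\rho\,d\nu^{u}$ with $\rho(x)=\sum_{\de}\int_{G^{p(x)}}f_{\de}(g^{-1}x)\,J(g,x)\,d\la^{p(x)}(g)$.

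It then remains to verify (i)--(iii). For the invariance (\ref{eq:invarX}): given $h\in G$ and $f\in C_{c}(X)$, write $(f\circ\ell_{h})(gy)=f((hg)y)$, change variables $g\mapsto hg$ using left-invariance of $\{\la^{u}\}$ and $s(hg)=s(g)$, and read off $\mu^{s(h)}(f\circ\ell_{h})=\mu^{r(h)}(f)$. Since $d\mu^{u}=\rho\,d\nu^{u}$ and $\{\nu^{u}\}$ already satisfies (i) and (ii), for (i) and (ii) it suffices to know $\rho\in C^{\infty,0}(X)$ and $\rho>0$. Positivity is immediate from $J>0$, $f_{\de}\geq 0$ and $\sum_{\de}\int_{G^{p(x)}}f_{\de}(g^{-1}x)\,d\la^{p(x)}(g)=1$. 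For smoothness I would apply Proposition~\ref{prop:cont} to each summand $(g,x)\mapsto f_{\de}(g^{-1}x)\,J(g,x)$: it lies in $C^{\infty,0}(G*_{r}X)$ since $(g,x)\mapsto g^{-1}x$ is the morphism $n'$ of (\ref{CD:GstarrX}), $J$ is $C^{\infty,0}$, and products of $C^{\infty,0}$ functions are $C^{\infty,0}$; the support hypothesis of Proposition~\ref{prop:cont} holds because properness makes $\{(g,x):x\in C,\ g^{-1}x\in\operatorname{supp}f_{\de}\}$ compact for every $C\in\mathcal{C}(X)$. Summing over the locally finite $\{f_{\de}\}$ then gives $\rho\in C^{\infty,0}(X)$, completing the verification.

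The step I expect to be the genuine obstacle is the regularity claim for $J$: checking carefully that the fibrewise Jacobian of the action diffeomorphisms $\ell_{g}$ varies $C^{\infty,0}$-ly in $(g,x)$, and that the summands $f_{\de}(g^{-1}\cdot)\,J(g,\cdot)$ meet the precise hypotheses of Proposition~\ref{prop:cont}. Everything else is the cut-off/averaging manipulation already carried out in Propositions~\ref{prop:cgx} and \ref{prop:hermG}.
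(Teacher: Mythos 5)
Your proof is correct, but it takes a genuinely different route from the paper's. The paper disposes of this proposition in a few lines: it applies Proposition~\ref{prop:hermG} with $E=TX$ to obtain a $G$-isometric $C^{\infty,0}$ Riemannian metric on $TX$, and takes $\mu^{u}$ to be the associated Riemannian densities $d\mu^{u}(x)=\left|\det A(u,w)\right|^{1/2}dw$; since each differential $\ell_{g}':TX^{s(g)}\to TX^{r(g)}$ is then a fibrewise isometry, the maps $\ell_{g}$ preserve these densities and invariance is automatic, with no Jacobian factor ever appearing. You instead average the measures themselves: you build a non-invariant reference density $\{\nu^{u}\}$, introduce the Jacobian cocycle $J(g,x)$ of the action relative to $\nu$, and average against a $G$-partition of unity. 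Both arguments rest on the same underlying machinery (Propositions~\ref{prop:aver} and \ref{prop:cont}, properness, local finiteness of the $f_{\de}$), but the paper performs the averaging one level up, on the metric, where Lemma~\ref{lemma:lan'} has already supplied the regularity; your version must redo that work at the level of densities, and the cost is exactly the step you single out, the $C^{\infty,0}$ regularity of $J$. That step does go through: in compatible charts $J$ is a product and quotient of the positive $C^{\infty,0}$ chart densities of $\nu$ with the fibrewise Jacobian determinant of the $C^{\infty,0}$ map $(g,x)\mapsto g^{-1}x$, and the definition of $C^{\infty,0}$ (continuity into $C^{\infty}$ with all derivatives, uniformly on compacta) ensures such fibrewise derivatives again vary $C^{\infty,0}$-ly --- the same fact the paper uses silently for the factor $\left|J_{g^{-1}}(z)\right|$ in Proposition~\ref{prop:wdd} and for the Radon--Nikodym factor in Proposition~\ref{prop:cgx}. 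What your route buys is independence from the tangent-bundle formalism (it averages an arbitrary fibrewise density, not a Riemannian one); what the paper's route buys is brevity and the reuse of an already-proven proposition in which all Jacobian bookkeeping is hidden inside the isometry statement.
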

\begin{proof}
We apply Proposition~\ref{prop:hermG} with $E=TX$ to obtain that there exists
a (real) $C^{\infty,0}$-hermitian metric $\ga$ on $TX$ which is $G$-isometric.
In the standard way, the metric determines a family $\mu^{u}$ of smooth
measures on the $X^{u}$'s.  In terms of local coordinates ((\ref{eq:Auz})),
$d\,\mu^{u}(x)=\left|\det A(u,w)\right|^{1/2}\negthinspace dw$ ($x\sim (u,w)$). The $G$-isometric property of $\ga$ 
gives (as in \cite[p.122]{jsrc}) that
the differential $\ell_{g}':TX^{s(g)}\to TX^{r(g)}$ is an isometry at every
$x\in X^{s(g)}$.  This gives the $G$-invariance of the $\mu^{u}$'s. The remaining
axioms for a $C^{\infty,0}$ left Haar system are easily shown to be satisfied
by the $\mu^{u}$'s.
\end{proof}

\begin{corollary}   \label{cor:slhs}
The continuous family groupoid $G$ has a $C^{\infty,0}$ left Haar system.
\end{corollary}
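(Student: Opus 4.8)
The plan is to obtain the corollary as the special case $(X,p)=(G,r)$ of Proposition~\ref{prop:muu}. Recall that immediately after the definition of a $C^{\infty,0}$ $G$-space it was noted --- via the morphism diagram (\ref{CD:GstarG}) --- that $(G,r)$ is always a proper $C^{\infty,0}$ $G$-space under the left-translation action $g\cdot h=gh$: properness is the assertion that $(g,h)\mapsto(gh,h)$ is proper, which holds on every locally compact groupoid, and the $C^{\infty,0}$ condition is precisely (\ref{CD:GstarG}). So I would simply invoke Proposition~\ref{prop:muu} with $X=G$ and $p=r$; it delivers a family $\{\mu^{u}\}_{u\in G^{0}}$ of smooth positive measures on the manifolds $X^{u}=r^{-1}(\{u\})=G^{u}$.

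It then remains to check that, in this special case, a $C^{\infty,0}$ left Haar system for $(G,r)$ in the sense of Definition~\ref{def:slH} is the same thing as a $C^{\infty,0}$ left Haar system on the groupoid $G$ (the object whose mere continuous version is produced by Theorem~\ref{th:LHS}). Conditions (i) and (ii) of Definition~\ref{def:slH} read verbatim as the requirements that each $\mu^{u}$ have support $G^{u}$ and that the local Radon--Nikodym derivatives against Lebesgue measure lie in $C^{\infty,0}$; the continuity of $u\mapsto\int f\,d\mu^{u}$ for $f\in C_{c}(G)$ then follows from (ii) by a partition-of-unity argument. For condition (iii) I would unwind the $G$-action on $G$: since $G$ acts on itself by left translation, the diffeomorphism $\ell_{g}$ carries $X^{s(g)}=G^{s(g)}$ onto $X^{r(g)}=G^{r(g)}$ by $h\mapsto gh$, so (\ref{eq:invarX}) becomes
\[
\int_{G^{s(g)}}f(gh)\,d\mu^{s(g)}(h)=\int_{G^{r(g)}}f(h)\,d\mu^{r(g)}(h),
\]
which is exactly the left-invariance identity in the definition of a left Haar system on $G$. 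Hence $\{\mu^{u}\}$ is a $C^{\infty,0}$ left Haar system on $G$.

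I do not expect any genuine obstacle: the argument is a direct specialization of Proposition~\ref{prop:muu}, and the only thing that needs care is identifying the canonical left action of $G$ on itself and confirming that the three axioms of Definition~\ref{def:slH} collapse to the usual groupoid Haar-system axioms. It is worth remarking that this also strengthens Theorem~\ref{th:LHS}, upgrading the continuous left Haar system produced there to a $C^{\infty,0}$ one, which is the version needed for the analytic index theorem.
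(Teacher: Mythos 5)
Your argument is correct and is exactly the paper's intended route: the corollary is stated immediately after Proposition~\ref{prop:muu} with no separate proof, precisely because one specializes that proposition to $(X,p)=(G,r)$ with the left-translation action, which the paper has already observed is a proper $C^{\infty,0}$ $G$-space via (\ref{CD:GstarG}). Your additional verification that Definition~\ref{def:slH} collapses in this case to the usual left Haar system axioms on $G$ is a reasonable bit of bookkeeping that the paper leaves implicit.
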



\section{Continuous families of pseudodifferential operators}

In this section we discuss briefly results on continuous families of
pseudodifferential operators (pdo's). It seems likely that the theory
sketched below is known, but for lack of a suitable reference, some
discussion of it seems appropriate.  The pseudodifferential operators are of
the kind $L^{m}_{\rho,\de}$ that were introduced and investigated in detail by
L. H\"{o}rmander (\cite{Hor2}). (In particular, the {\em classical} or
{\em polyhomogeneous} pdo's are included as a special case.   Smooth families of such pdo's 
on smooth groupoids have been investigated by Lauter, Monthubert and Nistor (\cite{LMN00,LMN01,LN}).)  We shall
primarily rely on the exposition of the theory of such pdo's operators
given by M. A. Shubin (\cite{Shubin}).  I am grateful to Professor Shubin for helpful correspondence.

Throughout the discussion, $m,\rho,\de$ will be real numbers such that
\begin{equation}
0\leq 1-\rho\leq \de < \rho. \label{eq:rhode}
\end{equation}
Let $m, N\geq 1$ and $W$ be an open subset of $\R^{m}$. Then $S_{\rho,\de}^{m}(W\x \R^{N})$
is the set of functions $a\in C^{\infty}(W\x \R^{N})$ such that for all
$K\in \mathcal{C}(W)$ and  multi-indices
$\al ,\bt$, there exists a constant $C_{\al ,\bt,K}\geq 0$ such that
for all $x\in K$ and $\theta\in \R^{N}$, we have
\begin{equation}
\left|\partial^{\al}_{\theta}\partial^{\bt}_{x}a(x,\theta)\right|
\leq C_{\al ,\bt,K}\left<\theta\right>^{m-\rho\left|\al\right|+
\de\left|\bt\right|}
\label{eq:dalbt}
\end{equation}
where $\left<\theta\right>=(1+\left|\theta\right|^{2})^{1/2}$.  The vector space
$S_{\rho,\de}^{m}=S_{\rho,\de}^{m}(W\x \R^{N})$
is a Fr\'{e}chet space under the seminorms
$\left\|a\right\|_{\al ,\bt,K}$, where $\left\|a\right\|_{\al ,\bt,K}$
is the best constant $C_{\al ,\bt,K}$ satisfying
(\ref{eq:dalbt}).  The space $S_{\rho,\de}^{-\infty}$
is defined to be $\cap_{m}S_{\rho,\de}^{m}$.  (It does not depend on $\rho$ and $\de$.)  If
$a(x,\theta)\in S_{\rho,\de}^{m}(W\x \R^{N})$, then
$\partial^{\al}_{\theta}\partial^{\bt}_{x}a(x,\theta)\in
S_{\rho,\de}^{m-\rho\left|\al\right| + \de\left|\bt\right|}$.
Further, if $a\in S_{\rho,\de}^{m}, b\in S_{\rho,\de}^{m'}$, then
$ab\in S_{\rho,\de}^{m+m'}$.

Now let $a(x,y,\xi)\in S_{\rho,\de}^{m}(W\x W\x \R^{N})$ where $W$ is an open subset of $\R^{N}$.  Then the function
$a$ determines a pdo $A:C_{c}^{\infty}(W)\to C^{\infty}(W)$ given by:
\[    Af(x)=(2\pi)^{-N}\iint e^{\imath (x-y)\cdot\xi}a(x,y,\xi)f(y)\,dy\,d\xi.\]
We write $A\in L_{\rho,\de}^{m}(W)$.  Each such $A$ determines, by the
Schwartz kernel theorem, a distribution $K_{A}(x,y)\in \mathcal{D}'(W\x W)$ where 
(\cite[p.11]{Shubin})
\begin{equation}  \label{eq:kaw}
\left<K_{A},w\right>=(2\pi)^{-N}\iiint 
e^{\imath (x-y)\cdot \theta}a(x,y,\theta)w(x,y)\,dx\,dy\,d\theta.
\end{equation}
Further, for $f,g\in
C_{c}^{\infty}(W)$, we have
\[    \left<Af,g\right>=\int (Af)\ov{g}=\left<K_{A}, f(y)\ov{g(x)}\right>.            \]
Then $A$ is determined by $K_{A}$ and conversely.
(Of course, $A$ does not determine $a(x,y,\xi)$ uniquely.)   The support of $K_{A}$ is denoted by $supp\: A$ ($\subset W\x W$).  Let $T$ be as in \S 2.

\begin{definition}
A {\em continuous family} (over $T$) with values in $S_{\rho,\de}^{m}(W\x\R^{N})$ is a continuous map
$t\to a(t,x,\theta)$ from $T$ into $S_{\rho,\de}^{m}(W\x\R^{N})$.
\end{definition}

The space of such continuous families is denoted by
$S_{\rho,\de}^{m}(T;W\x\R^{N})$, and is a Fr\'{e}chet space under the
seminorms $a\to \left\|a\right\|_{C;\al ,\bt,K}$, where for $C\in
\mathcal{C}(T)$, we set
\[          \left\|a\right\|_{C;\al ,\bt,K}=
\sup_{t\in C}\left\|a^{t}\right\|_{\al ,\bt,K}    \]
where $a^{t}(x,y,\theta)=a(t,x,y,\theta)$.
Of course, we take
$S_{\rho,\de}^{-\infty}(T;W\x \R^{N})
=\cap_{m}S_{\rho,\de}^{m}(T;W\x \R^{N})$.  Clearly, from (\ref{eq:dalbt}),
$S_{\rho,\de}^{m}(T;W\x \R^{N})\subset C^{\infty,0}(T\x W\x \R^{N})$.
We now extend in the obvious way the notion of an {\em asymptotic expansion}
for a function in $S_{\rho,\de}^{m}(T;W\x\R^{N})$.

Let $F_{j}\in S_{\rho,\de}^{m_{j}}(T;W\x\R^{N})$ for $j=1,2,\ldots $ where
$m_{j}\to -\infty$. Let $F\in C^{\infty,0}(T\x W\x \R^{N})$.  We write
\[  F(t,x,\theta)\thicksim \sum_{j=1}^{\infty} F_{j}(t,x,\theta)         \]
if, for all $r\geq 2$,
\begin{equation}
F-\sum_{j=1}^{r-1} F_{j}\in S_{\rho,\de}^{\ov{m_{r}}}(T;W\x\R^{N}) \label{eq:asym}
\end{equation}
where $\ov{m_{r}}=\max_{j\geq r}m_{j}$.

The continuous families version of \cite[Proposition 3.5]{Shubin} holds, i.e. 
{\em given any sequence $\{F_{j}\}$, $F_{j}\in S_{\rho,\de}^{m_{j}}(T;W\x\R^{N})$ where
$m_{j}\to -\infty$, then there exists a function 
$F\in S_{\rho,\de}^{k}(T;W\x \R^{N})$, where $k=\max_{j}m_{j}$, such that 
$F\thicksim \sum_{j=1}^{\infty}F_{j}$, and $F$ is unique up to an element of  
$S_{\rho,\de}^{-\infty}(T;W\x\R^{N})$.}  The proof of \cite[Proposition 3.5]{Shubin} is the case where $T$ is a singleton.  The estimates in this proof also apply to the case where $T$ is compact, and the general case reduces to this using the $\si$-compactness of $T$.   

We now discuss continuous families of pseudodifferential operators.
We say that a family $t\to A^{t}$, abbreviated simply to $A$,
is a {\em continuous family in
$L_{\rho,\de}^{m}(W)$} if there exists
$a\in S_{\rho,\de}^{m}(T;W\x W\x\R^{N})$ such that for all
$f\in C_{c}^{\infty}(W)$, we have
\begin{equation}
A^{t}f(x)=
(2\pi)^{-N}\iint e^{\imath(x-y).\xi}a(t,x,y,\xi)f(y)\,dy\,d\xi.
\label{eq:pdo}
\end{equation}
The set of such continuous families is denoted by $L_{\rho,\de}^{m}(T\x W)$.  We write
$L_{\rho,\de}^{m}(T\x W;T\x W\x \C^{p}, T\x W\x \C^{q})$ for the space of $q\x p$ matrices whose entries are in $L_{\rho,\de}^{m}(T\x W)$. 

For $f\in C_{c}^{\infty,0}(T\x W)$, we write $Af(t,w)=A^{t}f^{t}(w)$.
By using a standard regularization (\cite[p.5]{Shubin}) if $f\in C^{\infty,0}_{c}(T\x W)$,
then the function $Af$ can be represented as an absolutely convergent integral whose integrand varies continuously in the parameter $t$ and smoothly in the parameter $x$.  
It follows that $Af\in C^{\infty,0}(T\x W)$.

The {\em kernel} of the family $A$ is the set of distributions
$\{K^{t}:t\in T\}$ where $K^{t}$ is the kernel of $A^{t}$.  The {\em support}
$supp\: A$ of $A$ is defined:
\[  supp\: A=\ov{\cup_{t\in T}\{t\}\x supp\: K^{t}}\subset T\x W\x W.   \]

The continuous family $A$ of pdo's is said to be {\em properly supported}
if the projection maps $\Pi_{1},
\Pi_{2}:supp\: A\to T\x W$ are proper, where
$\Pi_{1}(t,x,y)=(t,x), \Pi_{2}(t,x,y)=(t,y)$.  (The
non-local version of this for an almost differentiable groupoid is given in \cite{NWX}.  The non-local version in our context will be given later.) Obviously, if $A$ is properly supported, then so
is every $A^{t}$ in the usual sense of the term (\cite[p.16]{Shubin}).  If $A$ is properly supported,  then $A:C_{c}^{\infty,0}(T\x W)\to C_{c}^{\infty,0}(T\x W))$. 

Let $a\in S_{\rho,\de}^{m}(T;W\x W\x\R^{N})$ and $p:T\x W\x W\x \R^{N}\to T\x W\x W$
be the projection map: $(t,x,y,\xi)\to (t,x,y)$.  Define $supp_{t,x,y}\:a$ to be the closure of  
$p(supp\:a)$ in $T\x W\x W$.   We say that $a(t,x,y,\xi)$ is {\em properly supported}
(cf. \cite[p.18]{Shubin}) if both projections $(t,x,y)\to (t,x)$, $(t,x,y)\to (t,y)$ are proper on 
$supp_{t,x,y}\:a$.  As in \cite[Proposition 3.2]{Shubin}, if $A$ is properly supported, then the $a$ of (\ref{eq:pdo}) can be taken to be properly supported.  

Let $A\in L_{\rho,\de}^{m}(T\x W)$ be properly supported.  Let 
$e_{\xi}(x)=e^{\imath x.\xi}$.  Then the function $\si_{A}$, where
\[       \si_{A}(t,x,\xi)=e_{-\xi}(x)A^{t}e_{\xi}(x)     \]
belongs to $C^{\infty,0}(T\x W)$.  The function $\si_{A}$ is called the {\em symbol} of $A$.
Using the inverse Fourier transform
for $f^{t}$ in (\ref{eq:pdo}), the function $Af(t,x)$ is 
(cf. \cite[p.19]{Shubin}) the iterated integral:
\begin{equation}
Af(t,x)=(2\pi)^{-N}\iint e^{\imath(x-y).\xi}\si_{A}(t,x,\xi)f(t,y)\,dy\,d\xi
    \label{eq:asi}
\end{equation}

In fact, the argument of \cite[pp.21-25]{Shubin} adapts to give that 
$\si_{A}\in S_{\rho,\de}^{m}(T;W\x \R^{N})$ and if
$a\in S_{\rho,\de}^{m}(T;W\x W\x\R^{N})$ satisfies (\ref{eq:pdo}), then
\begin{equation}
\si_{A}(t,x,\xi)\thicksim \sum_{\al}\frac{1}{\al!}\partial^{\al}_{\xi}D^{\al}_{y}
a(t,x,y,\xi)|_{y=x}   \label{eq:siasy}
\end{equation}
where $D=\imath^{-1}\frac{\partial}{\partial y}$.  Note
also that
$\partial^{\al}_{\xi}D^{\al}_{y} a(t,x,y,\xi)_{\mid y=x}\in
S_{\rho,\de}^{m-(\rho -\de)\left|\al\right|}(T;W\x \R^{N})$ so that the
asymptotic expansion in (\ref{eq:siasy}) makes sense.

Next (cf. \cite[Proposition 3.3]{Shubin}), for {\em any}
$A\in L_{\rho,\de}^{m}(T\x W)$, we can write $A=A_{0} + A_{1}$ where
$A_{0}, A_{1}\in L_{\rho,\de}^{m}(T\x W)$ with $A_{0}$ properly supported and
$A_{1}$ with kernel in $C^{\infty,0}(T\x W\x W)$.  It follows that for such an $A$, we
can still define the symbol $\si_{A}$ as an equivalence class of
$S_{\rho,\de}^{m}(T;W\x \R^{N})/S_{\rho,\de}^{-\infty}(T;W\x \R^{N})$.

As in the case of a single pdo (\cite[p.26-28]{Shubin}),
if $A\in L_{\rho,\de}^{m}(T\x W)$ is properly
supported, then the transpose $^{t}A$ and the adjoint $A^{*}$ both belong to
$L_{\rho,\de}^{m}(T\x W)$ and are properly supported.  Further,
if $A\in L_{\rho,\de}^{m_{1}}(T\x W), B\in L_{\rho,\de}^{m_{2}}(T\x W)$ and $B$
is properly supported, then both $AB, BA$ belong to
$L_{\rho,\de}^{m_{1}+m_{2}}(T\x W)$.  The asymptotic expansions for $^{t}A,
A^{*}$ and $BA$ (both $B,A$ properly supported) correspond in the obvious way
to the asymptotic expansions for the corresponding single pdo cases.

Turning to change of variables for pdo's, let
$W,W_{1}$ be open subsets of $\R^{N}$, and $X,X_{1}$ be the trivial continuous families 
$X=T\x W$, $X_{1}=T\x W_{1}$.  Let $\ka:X\to X_{1}$ be a $C^{\infty,0}$-diffeomorphism from $X$ onto $X_{1}$, and let $\ka_{1}=\ka^{-1}$.  Abusing notation slightly, we write 
$\ka_{1}(t,w)=\ka_{1}^{t}(w)$.  
Let $A\in L_{\rho,\de}^{m}(X)$.  Then $A$
in ``$X_{1}$'' terms is given by the map
$A_{1}:C_{c}^{\infty,0}(X_{1})\to C^{\infty,0}(X_{1})$, where
\begin{equation}    \label{eq:A1kappa}
A_{1}(f)=A(f\circ \kappa)\circ \kappa_{1}.
\end{equation}

Further, $A_{1}$ is given by the family of Fourier integral operators:
\begin{equation}         \label{eq:a11}
A_{1}f(t,w)=(2\pi)^{-N}\iint e^{\imath(\ka_{1}(t,w)-\ka_{1}(t,z)).\xi}
a(t,\ka_{1}(t,w),\ka_{1}(t,z),\xi)\left|\det\,\ka_{1}'(t,z)\right|
f(t,z)\,dz\,d\xi       
\end{equation}
where $\ka_{1}'(t,z)$ is the Jacobian matrix of $\ka_{1}$ with respect to $z$ ($t$ fixed).
The single operator argument of \cite[pp.32-35]{Shubin} adapts readily to give 
$A_{1}\in L_{\rho,\de}^{m}(X_{1})$, and also its asymptotic expansion.  In particular, the leading term of that expansion is 
$\si_{A}(t,\ka_{1}(t,y),(^{t}\ka_{1}'(t,y))^{-1}\eta)\in 
S_{\rho,\de}^{m}(T;W_{1}\x \R^{N})$.
Further, as in the case of a 
single pdo (\cite[p.35]{Shubin}), it follows that 
\begin{equation}
\si_{A_{1}}(t,y,\eta)-\si_{A}(t,\ka_{1}(t,y),(^{t}\ka_{1}'(t,y))^{-1}\eta)
\in S_{\rho,\de}^{m-2(\rho -1/2)}(T;W_{1}\x \R^{N}). \label{eq:prin}
\end{equation}
So modulo symbols of order lower than $m-2(\rho -1/2)$, the symbols of all operators $A_{1}$ give the same well-defined function on the cotangent bundle $T^{*}(T\x W)$.

We will consider general continuous families of pdo's later in this section. For the
present we recall briefly some facts about a single pdo $A$ on a smooth manifold $M$.  Let $A:C_{c}^{\infty}(M)\to C^{\infty}(M)$ be a linear
map.  Suppose that for each chart $(U,\phi)$ for $M$, the map
$A_{1}:C_{c}^{\infty}(\phi(U))\to C^{\infty}(\phi(U))$ belongs to
$L_{\rho,\de}^{m}(\phi(U))$, where $A_{1}$ is given by (\ref{eq:A1kappa}) (with $\phi$ in place of $\ka$.)
Then $A$ is called a (scalar) pdo on $M$.  The set of such $A$'s is denoted by
$L_{\rho,\de}^{m}(M)$.

The notion of an $S_{\rho,\de}^{m}$-function extends to any smooth vector
bundle $E$ of rank $p$ over $M$.  Such a function is a section of $E$ which locally is given by a $p$-tuple of 
$S_{\rho,\de}^{m}$ functions.  The space of such functions is denoted by 
$S_{\rho,\de}^{m}(E)$.  

More generally, of course, one considers two smooth vector bundles $E,F$
over $M$ and a pdo $A:C_{c}^{\infty}(M,E)\to C^{\infty}(M,F)$. This means that locally, 
$A$ is given by a matrix of scalar pdo's.  The set of such $A$'s
is denoted by $L_{\rho,\de}^{m}(M;E,F)$.  Let $A\in L_{\rho,\de}^{m}(M;E,F)$.
Using a partition of unity argument to piece together the leading terms in the asymptotic expansions of local versions of $A$ ((\ref{eq:prin}))
gives an element of $S_{\rho,\de}^{m}(\text{Hom}(\pi^{*}E,\pi^{*}F))$, which is unique
modulo $S_{\rho,\de}^{m-2(\rho-1/2)}(\text{Hom}(\pi^{*}E,\pi^{*}F))$,
where $\pi:T^{*}M\to M$ is the canonical map.  The equivalence class of this element is called the {\em principal symbol} of $A$.  All of this extends naturally to the continuous families case below.

Indeed, let $(X,p)$ be a continuous family of manifolds over some $T$ and
$E,F$ be $C^{\infty,0}$ complex vector bundles of rank $p,q$ over $X$.  For each
$u\in T$, let $D^{u}\in L_{\rho,\de}^{m}(X^{u};E^{u},F^{u})$ and assume that $u\to D^{u}$ is continuous.  Let $D=\{D^{u}\}$.  For 
$U$ open in $X$, let $D_{U}=D_{\mid C_{c}^{\infty,0}(U,E_{U})}$.  
Given a chart $(U,\phi)$, $\phi(U)=p(U)\x W$, for $X$ trivializing both $E, F$, the family $D$ induces a map $(D_{U})_{1}\in L_{\rho,\de}^{m}(Y;Y\x \C^{p}, Y\x \C^{q})$ where
$Y=p(U)\x W$.
Let us precisely specify the family $(D_{U})_{1}$.  Let $h:E_{U}\to U\x \C^{p}$ and 
$h':F_{U}\to U\x \C^{q}$ be $C^{\infty,0}$-trivializations for $E_{U}, F_{U}$.  Let 
$\al:(\phi\x 1)\circ h:E_{U}\to \phi(U)\x \C^{p}$ and 
$\bt=(\phi\x 1)\circ h':F_{U}\to \phi(U)\x \C^{q}$.  Then 
\begin{equation}      \label{eq:du1}
(D_{U})_{1}f=\bt\circ D_{U}(\al^{-1}\circ f\circ\phi)\circ \phi^{-1}.
\end{equation} 

By 
Proposition~\ref{prop:muu} with $G=T$, the groupoid of units acting in the obvious way on $X$, there exists a $C^{\infty,0}$ left Haar system $\{\mu^{u}\}$ for $X$.  We can then define (as, e.g., in
\cite[p.21]{Eg}) the kernel $K^{u}$ of each $D^{u}$ by:
\[   D^{u}f(x_{1})=\int_{X^{u}} K^{u}(x_{1},x_{2})f(x_{2})\,d\mu^{u}(x_{2}).         \]
The kernel $K$ of $D$ is defined to be the set
$\{K^{u}: u\in T\}$.
The support $K$ of $D$ and the {\em properness} of $D$ are then defined as above in the case of $L_{\rho,\de}^{m}(T\x W)$.  In particular, 
$\text{supp }D=\ov{\cup_{u\in T}\text{supp }K^{u}}$ is a closed subset of $X*X$, and for properness, we require that the projections 
$(x_{1},x_{2})\to x_{1}$, $(x_{1},x_{2})\to x_{2}$ from $\text{supp }K$ into $X$ be proper maps.
The family D is said to have {\em compact support} or to be {\em compactly supported} if 
$\text{supp }D$ is compact.

\begin{definition}      \label{def:psdo}
The family $D$ is called a {\em pseudodifferential family} on $X$ if it satisfies the following local condition: given a chart $U\sim p(U)\x W=Y$ of $X$
trivializing $E$ and $F$ then the family
$(D_{U})_{1}$ belongs to $L_{\rho,\de}^{m}(Y;Y\x \C^{p},Y\x \C^{q})$.
\end{definition}

The set of pseudodifferential families $D$ is denoted by $L_{\rho,\de}^{m}(X;E,F)$. 

Note that in Definition~\ref{def:psdo}, we only require the condition on $(D_{U})_{1}$ to be valid for a family of charts $U$ forming a basis for the topology of $X$.  Note also that if $U$ is a chart and the support of the kernel of $D$ is contained in $U*U$, then $D$ and $(D_{U})_{1}$ mutually determine each other.  Familiar properties of $C^{\infty}$-pdo's extend to pseudodifferential families using similar proofs. In particular, $D$ is a pseudodifferential family if and only if $\phi D\psi$ is a pseudodifferential family for all $\phi,\psi\in C_{c}^{\infty,0}(X)$.  Further, if $D$ is a pseudodifferential family and is proper, then  $D:C_{c}^{\infty,0}(X,E)\to C_{c}^{\infty,0}(X,F)$. 
The principal symbol $\si_{D}$ of $D$ is defined as for a single pdo $A$ above; it belongs
to $S_{\rho,\de}^{m}(\text{Hom}(\pi^{*}E,\pi^{*}F))/S_{\rho,\de}^{m-2(\rho-1/2)}(\text{Hom}(\pi^{*}E,\pi^{*}F))$.  

Now let $X$ be a proper $C^{\infty,0}$ $G$-space for some continuous family groupoid $G$, and $E,F$ be $C^{\infty,0}$ $G$-vector bundles over $X$.  We can form the pull-back continuous family 
$(r^{*}X,t_{1})$ over $G$.  In the obvious way, this pull-back continuous family is just 
$(G *_{r} X,t_{1})$.  Also, each $(r^{*}X)^{g}=\{g\}\x X^{r(g)}$ which we will identify with $X^{r(g)}$.
Further, the pull-back bundle $r^{*}E$ is a $C^{\infty,0}$ vector bundle over $r^{*}X$.   

Each $f\in C^{\infty,0}(X,E)$ determines, for each $u\in G^{0}$, a $C^{\infty}$-section $f^{u}$ of $\mathcal{E}^{u}$.  For $g\in G$, we write
\begin{equation}  \label{eq:Lg}
L_{g}f(x)=L_{g}f^{s(g)}(x)=g[f(g^{-1}x)]\hspace{.1in} (x\in X^{r(g)}).
\end{equation}
The section $f$ is called {\em invariant} if 
$L_{g}f^{s(g)}=f^{r(g)}$ for all $g\in G$.  
  
Now let $P\in L_{\rho,\de}^{m}(X;E,F)$ be proper.  For each $g\in G$, define the pdo 
$\widetilde{P}^{g}$ by setting:
\[ \widetilde{P}^{g}=L_{g}P^{s(g)}L_{g^{-1}}\in
L_{\rho,\de}^{m}(X^{r(g)};\mathcal{E}^{r(g)},\mathcal{F}^{r(g)})=
L_{\rho,\de}^{m}((r^{*}X)^{g};(r^{*}E)^{g},(r^{*}F)^{g}). \]  

We now calculate how the local symbols and distributional kernel $K$ of $P$ relate to those of the $\widetilde{P}^{g}$'s.  Let $\{\mu^{u}\}$ be a $C^{\infty,0}$ left Haar system for the (proper $G$-space) $X$ (Proposition~\ref{prop:muu}).
For $h\in C_{c}^{\infty}(X^{r(g)},\mathcal{E}^{r(g)})$, 
\begin{gather*}
\widetilde{P}^{g}(h)(x)=L_{g}P^{s(g)}L_{g^{-1}}h(x)
=g(P^{s(g)}L_{g^{-1}}h)(g^{-1}x)\\
=\int gK^{s(g)}(g^{-1}x,y)L_{g^{-1}}h(y)\,d\mu^{s(g)}(y)=
\int gK^{s(g)}(g^{-1}x,g^{-1}z)g^{-1}h(z)\,d\mu^{r(g)}(z).  
\end{gather*}
So if $K^{g}$ is the distributional kernel of $\widetilde{P}^{g}$ then 
\begin{equation}     \label{eq:kgdist}
K^{g}(x,y)=gK^{s(g)}(g^{-1}x,g^{-1}y)g^{-1}. 
\end{equation}

Next, a trivial modification of the proof of (\ref{eq:prin}) (with $\kappa=\ell_{g}$) gives that
\begin{equation}      \label{eq:*}
\si_{L_{g}P^{s(g)}L_{g^{-1}}}(x,\eta)=g\si_{P^{s(g)}}(g^{-1}x,g^{-1}\eta)g^{-1}
\end{equation}
modulo
$S_{\rho,\de}^{m-2(\rho -1/2)}$, where $g^{-1}\eta=(^{t}\ell_{g^{-1}}')^{-1}\eta$.

Now let $\widetilde{P}=\{\widetilde{P}^{g}\}$.

\begin{proposition}  \label{prop:wdd}
$\widetilde{P}\in L_{\rho,\de}^{m}((r^{*}X);(r^{*}E),(r^{*}F))$ and is proper.
\end{proposition}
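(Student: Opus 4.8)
The plan is to verify the two assertions of the proposition — that $\widetilde{P}$ is a pseudodifferential family on $(r^{*}X,t_{1})$ in the sense of Definition~\ref{def:psdo}, and that it is proper — by working chart-by-chart over the base $G$ and reducing to the local $L^{m}_{\rho,\de}$-theory of \S 4. First I would fix a point $g_{0}\in G$ and choose compatible charts: a chart $V\thicksim r(V)\x W'$ for $(G,r)$ around $g_{0}$, and a chart $U\thicksim p(U)\x W$ for $(X,p)$ with $p(U)\supset r(V)$, trivializing $E$ and $F$, such that $\ell_{g}$ carries a neighborhood in $X^{s(g)}$ into $U\cap X^{r(g)}$ for $g$ near $g_{0}$ (shrinking $V$ if necessary; here I use that $(X,p)$ is a $C^{\infty,0}$ $G$-space, so $g\mapsto \ell_{g}$ varies $C^{\infty}$-continuously by~(\ref{CD:GstarrX})). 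Pulling back along $t_{1}$, the set $V*_{r}U$ (intersected appropriately) is a chart for $(r^{*}X,t_{1})$ of the form $r(V)\x W' \x W$ — wait, more precisely $\{(g,x): g\in V, x\in U\cap X^{r(g)}\}\thicksim r(V)\x W'\x W$ by the pull-back chart construction of \S 2.

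The heart of the argument is the local symbol computation. On such a chart, $(\widetilde{P}_{V*_{r}U})_{1}$ is obtained from $(P_{U})_{1}\in L^{m}_{\rho,\de}(p(U)\x W;\dots)$ by the fiberwise change of variables $\kappa = \ell_{g}$, which depends $C^{\infty,0}$-continuously on the extra parameter $g$ (equivalently on $w'\in W'$). Thus $(\widetilde{P}_{V*_{r}U})_{1}$ is, up to relabeling the parameter space from $p(U)$ to $r(V)\x W'$, exactly a continuous family of the Fourier integral operators~(\ref{eq:a11}) with $\kappa_{1}=\ell_{g^{-1}}$ and amplitude $a(s(g),\ell_{g^{-1}}(\cdot),\ell_{g^{-1}}(\cdot),\xi)\,|\det \ell_{g^{-1}}'|$. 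The change-of-variables result quoted after~(\ref{eq:a11}) — the continuous-families adaptation of \cite[pp.32--35]{Shubin} — then shows directly that this belongs to $L^{m}_{\rho,\de}$ over the parameter space $r(V)\x W'$, with leading symbol term as in~(\ref{eq:*}). Since charts of the form $V*_{r}U$ form a basis for the topology of $r^{*}X$ (by the pull-back chart construction and the remark after Definition~\ref{def:psdo} that it suffices to check a basis), this establishes $\widetilde{P}\in L^{m}_{\rho,\de}((r^{*}X);(r^{*}E),(r^{*}F))$.

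For properness, I would use the kernel formula~(\ref{eq:kgdist}): $\operatorname{supp}K^{g} = (\ell_{g}\x\ell_{g})(\operatorname{supp}K^{s(g)})$, so $\operatorname{supp}\widetilde{P} \subset \{(g,x,y): (g^{-1}x,g^{-1}y)\in \operatorname{supp}P\}$ inside $(r^{*}X)*(r^{*}X) = \{(g,x,y): x,y\in X^{r(g)}\}$. I must check that the two projections $(g,x,y)\mapsto(g,x)$ and $(g,x,y)\mapsto(g,y)$ to $r^{*}X$ are proper. Given a compact $L\subset r^{*}X$, i.e. a compact set of pairs $(g,x)$ with $x\in X^{r(g)}$, I need the set of $(g,x,y)$ in $\operatorname{supp}\widetilde{P}$ with $(g,x)\in L$ to be compact; the fiber over $(g,x)$ consists of $y$ with $(g^{-1}x,g^{-1}y)\in\operatorname{supp}P$, and since $P$ is proper, $g^{-1}y$ ranges over a set whose closure is compact and depends continuously on $g^{-1}x$, hence $y=g(g^{-1}y)$ ranges over a compact set depending continuously on $(g,x)$ (using again that $(g,z)\mapsto gz$ is continuous). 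Compactness of $L$ then forces the total set to lie in a compact set.

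The main obstacle I anticipate is not any single deep estimate but rather the bookkeeping of parameters: one must be careful that the ``extra'' parameter in the $L^{m}_{\rho,\de}$-family for $\widetilde{P}$ is genuinely $(r(V),W')$ — i.e. a point of $G$ near $g_{0}$ presented in $(G,r)$-coordinates — and that the amplitude $a(t,\cdot,\cdot,\xi)$ of $P$, which carries parameter $t\in p(U)\subset G^{0}$, is composed with the $C^{\infty,0}$ map $g\mapsto(s(g),\ell_{g^{-1}})$ to produce a legitimate element of $S^{m}_{\rho,\de}(r(V)\x W';\,W\x\R^{N})$. This is exactly where one needs the $C^{\infty,0}$-morphism property~(\ref{CD:GstarrX}) of the action and the fact that $s:G\to G^{0}$ is a $C^{\infty,0}$ map; granting those, the $S^{m}_{\rho,\de}$-seminorm estimates are stable under the composition, and the rest is the routine continuous-families bookkeeping already set up in \S 4. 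A secondary point worth a sentence is the consistency of the local descriptions on overlapping charts $V*_{r}U$ and $V'*_{r}U'$, which follows because the $\widetilde{P}^{g}$ are genuine operators $L_{g}P^{s(g)}L_{g^{-1}}$ defined independently of any chart, so Definition~\ref{def:psdo} is being checked for an honestly globally-given family.
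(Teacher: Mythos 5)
Your proposal is correct and follows essentially the same route as the paper: localize on a chart $Z*_{r}U$ of $r^{*}X$, recognize $\widetilde{P}^{g}$ there as the parameter-dependent change of variables (\ref{eq:a11}) applied to $P^{s(g)}$ with $\kappa=\ell_{g}$ (yielding an amplitude $b(g,x,z,\xi)$ with $S^{m}_{\rho,\de}$ entries depending continuously on $g$), and read off properness from the kernel formula (\ref{eq:kgdist}). The one piece of bookkeeping the paper makes explicit that you leave implicit is the third chart $V$ around $g_{0}^{-1}x_{0}$ with $Z^{-1}U\subset V$, on which $P^{s(g)}$ is put in local coordinates \emph{before} transporting by $\ell_{g}$; your condition that $\ell_{g}$ carry a neighborhood in $X^{s(g)}$ into $U$ is this same requirement in disguise.
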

\begin{proof}
We can reduce the proof to the local case as follows.  Let
$(g_{0},x_{0})\in G*_{r} X$. We can find charts $Z, U, V$ around  $g_{0}$ in
$G$, $x_{0}$ in $X$ and $g_{0}^{-1}x_{0}$ in $X$ such that $Z^{-1}U\subset V$.  We
can further assume that $r(Z)=p(U)$ so that $Z*_{r} U$ is a chart containing
$(g_{0},x_{0})$ in $r^{*}X$.  We write $U\thicksim p(U)\x W$, $Z\thicksim
r(Z)\x L$ and $V\thicksim p(V)\x W'$. In addition, we can assume that $E,F$
are trivial over both $U,V$ so that both $r^{*}E, r^{*}F$ are trivial over
$Z*_{r} U$.   Also $Z*_{r} U\thicksim Z\x W$. We will use local coordinates. 

Let $P^{u}$ be represented as in (\ref{eq:pdo}).
Then as in (\ref{eq:a11}) and with $J_{g^{-1}}$ the Jacobian of the map $z\to g^{-1}z=y$, we get
for $x\in U, h\in C_{c}^{\infty,0}(U,E_{U})$ and $g\in Z$, 
\beqns
L_{g}P^{s(g)}L_{g^{-1}}h(x) & = &
g[P^{s(g)}(L_{g^{-1}}h)(g^{-1}x)]  \\
& = &
(2\pi)^{-k}g\iint e^{\imath(g^{-1}x-y).\xi}a(s(g),g^{-1}x,y,\xi)L_{g}^{-1}h(y)\,dy\,d\xi    \\
&=& (2\pi)^{-k}g\iint e^{\imath(g^{-1}x-g^{-1}z).\xi}a(s(g),
g^{-1}x,g^{-1}z,\xi)g^{-1}h(z)\left| J_{g^{-1}}(z)\right|\,dz\,d\xi    \\
&=& (2\pi)^{-k}\iint e^{\imath(x-z).\xi}b(g,x,z,\xi)
h(z)\,dz\,d\xi
\eeqns
where (cf. \cite[(4.10)]{Shubin})
\[  b(g,x,z,\xi)
=[ga(s(g),g^{-1}x,g^{-1}z,\psi(g,x,z)\xi)g^{-1}]
\left|\det\psi_{g}(x,z)\right|\left| J_{g^{-1}}(z)\right|  \]
and $g\to \psi(g,x,z)$ is $C^{\infty,0}$.
Then $g\to b(g,x,z,\xi)$ is a matrix-valued function with $S^{m}_{\rho,\de}$ entries, and it follows that  
$\widetilde{P}\in L_{\rho,\de}^{m}(r^{*}X;r^{*}E,r^{*}F)$.  It remains to show that $\widetilde{P}$ is proper.  To this end, by (\ref{eq:kgdist}),
the support of $L_{g}P^{s(g)}L_{g^{-1}}$ is $g\text{ supp }K^{s(g)}$.  So the support of 
$\widetilde{P}$ is the closure in $G\x X\x X$ of 
$\{(g,a,b): g^{-1}(a,b)\in \text{supp }K^{s(g)}\}$.  The properness of $\widetilde{P}$ now follows from that of $P$.
\end{proof}

\begin{definition}    \label{def:invpsdo}
A pseudodifferential family $D$ on $X$ is called {\em invariant} if
\begin{equation}  \label{eq:rgd}
L_{g}D^{s(g)}L_{g^{-1}}=D^{r(g)}
\end{equation}
for all $g\in G$.
\end{definition}

For each $u\in G^{0}$, $C_{c}(X^{u}, \mathcal{E}^{u})$ is a pre-Hilbert space where 
\[  \lan f,h\ran^{u}=\int \lan f(x),h(x)\ran^{u}\,d\mu^{u}.  \]
The Hilbert space completion of $C_{c}^{\infty}(X^{u},\mathcal{E}^{u})$ is denoted by 
$L^{2}(X^{u},\mathcal{E}^{u})$.  For each $g\in G$, the map $L_{g}$ extends to a unitary map
$L_{g}:L^{2}(X^{s(g)},\mathcal{E}^{s(g)})\to L^{2}(X^{r(g)},\mathcal{E}^{r(g)})$.  

The set of invariant, proper, pseudodifferential families is denoted by $\Psi^{m}_{\rho,\de}(X;E,F)$.  
Let $D\in \Psi^{m}_{\rho,\de}(X;E,F)$.   Then $D^{*}=\{(D^{u})^{*}\}\in \Psi^{m}_{\rho,\de}(X;F,E)$: it is invariant, since for
$f\in C^{\infty}_{c}(X^{r(g)}, \mathcal{E}^{r(g)}), h\in C^{\infty}_{c}(X^{s(g)}, \mathcal{F}^{s(g)})$,
\[ \lan f,L_{g}(D^{s(g)})^{*}L_{g^{-1}}h\ran^{r(g)}
=\lan L_{g}D^{s(g)}L_{g^{-1}}f,h\ran^{r(g)}
=\lan D^{r(g)}f,h\ran^{r(g)}=
\lan f,(D^{r(g)})^{*}h\ran^{r(g)}.       \]   

The condition that $supp\: D$ be compact for an invariant pseudodifferential family $D$
is too stringent. Instead, we will require that $\text{supp }D$ be {\em $G$-compact in $X*X$}. Connes (\cite[p.125]{Cointeg}), in the context of the holonomy groupoid, requires that the distribution $k$ have compact support where $K(x,y)=k(x^{-1}y)$.  A similar requirement applies in the homogeneous case of Connes and Moscovici (\cite[p.294]{CoMos}).  In their context (\cite[Definition 8]{NWX}), 
Nistor, Weinstein and Xu call a pseudodifferential family $D$ {\em uniformly supported} if its {\em reduced
support} $\mu_{1}(supp\: D)$ is compact where $\mu_{1}(x,y)=xy^{-1}$. 
In the special cases of these papers, the above conditions on the support of $K$ are each  equivalent to the condition used in this paper, viz. that $\text{supp }D$ be $G$-compact.

We now show that if $P$ is a pseudodifferential family with {\em compact} support
then it can be averaged over $G$ to give an invariant pseudodifferential
family. The converse is also true when $X$ is $G$-compact. These results for a pseudodifferential operator in the  homogeneous case are proved by Connes and Moscovici (\cite{CoMos}).  

\begin{proposition}     \label{prop:av}
Let $P\in L_{\rho,\de}^{m}(X;E,F)$ be compactly supported.  Then $\text{Av}(P)\in 
\Psi^{m}_{\rho,\de}(X;E,F)$ where
\begin{equation}  \label{eq:av}
\text{Av}(P)h(x)=\int L_{g}P^{s(g)}L_{g^{-1}}h(x)\,d\la^{p(x)}(g).
\end{equation}
\end{proposition}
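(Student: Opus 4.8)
The plan is to verify that $\text{Av}(P)$ is a well-defined pseudodifferential family, that it is proper, and that it is invariant, treating these three assertions in turn. First I would address well-definedness and continuity. Since $P$ has compact support, for each fixed $x\in X$ the integrand $g\mapsto L_g P^{s(g)} L_{g^{-1}}h(x)$ in \eqref{eq:av} has compact support in $g$ (because $L_g P^{s(g)} L_{g^{-1}}$ has kernel supported in $g\,\text{supp }K^{s(g)}$, which meets $\{x\}\times X$ only for $g$ in a compact set, by properness of the $G$-action and compactness of $\text{supp }P$), so the integral converges absolutely. To see that $\text{Av}(P)$ is a pseudodifferential family in the sense of Definition~\ref{def:psdo}, I would work in a chart $U\thicksim p(U)\x W$ trivializing $E$ and $F$, with $U$ relatively compact, and use Proposition~\ref{prop:wdd}: the family $\widetilde{P}=\{\widetilde{P}^g\}$ lies in $L^m_{\rho,\de}(r^*X;r^*E,r^*F)$ and is proper, so locally its full symbol is given by a symbol $b(g,x,z,\xi)$ with $S^m_{\rho,\de}$ entries depending $C^{\infty,0}$-continuously on $g$ (this is exactly the local computation carried out inside the proof of Proposition~\ref{prop:wdd}). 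Integrating $b(g,x,z,\xi)$ against $d\la^{p(x)}(g)$ over the compact set of relevant $g$'s produces, by the same kind of ``integrate a continuously varying family of symbols'' argument used in Propositions~\ref{prop:cgx} and \ref{prop:cont}, a symbol in $S^m_{\rho,\de}(p(U)\x W\x\R^N)$, and the associated operator is the local representative $(\text{Av}(P)_U)_1$. Hence $\text{Av}(P)\in L^m_{\rho,\de}(X;E,F)$.

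Next I would check properness. The distributional kernel of $\text{Av}(P)^u$ on $X^u$ is obtained by integrating the kernels $K^g$ of $\widetilde P^g$ over $g\in G^u$; by \eqref{eq:kgdist}, $K^g(x,y)=gK^{s(g)}(g^{-1}x,g^{-1}y)g^{-1}$. Thus $\text{supp }\text{Av}(P)$ is contained in the closure of $\{(x,y)\in X*X:\ g^{-1}(x,y)\in\text{supp }K^{s(g)}\text{ for some }g\text{ with }r(g)=p(x)\}=G\cdot\text{supp }P$. Since $\text{supp }P$ is compact and the $G$-action on $X$ (hence on $X*X$, diagonally as $(g,(x,y))\mapsto(gx,gy)$, which is again a proper action) is proper, the saturation $G\cdot\text{supp }P$ projects properly to each factor $X$: given $C\in\mathcal{C}(X)$, the set of $(x,y)\in G\cdot\text{supp }P$ with $x\in C$ forces $g$ into a compact set and then $y$ into a compact set. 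So both projections of $\text{supp }\text{Av}(P)$ to $X$ are proper, i.e.\ $\text{Av}(P)$ is proper. In particular $\text{Av}(P)$ maps $C^{\infty,0}_c(X,E)$ into $C^{\infty,0}_c(X,F)$.

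Finally, invariance. For $h\in C_c^\infty(X^{r(k)},\mathcal{E}^{r(k)})$ and $k\in G$, I compute, using the definition \eqref{eq:av} and the left-invariance of the Haar system $\{\la^u\}$,
\[
L_k\,\text{Av}(P)^{s(k)}L_{k^{-1}}h
= L_k\!\int_{G^{s(k)}}\! L_g P^{s(g)}L_{g^{-1}}(L_{k^{-1}}h)\,d\la^{s(k)}(g)
= \int_{G^{s(k)}}\! L_{kg}P^{s(kg)}L_{(kg)^{-1}}h\,d\la^{s(k)}(g),
\]
where I used $L_kL_g=L_{kg}$, $s(g)=s(kg)\cdot$no — rather $s(kg)=s(g)$ when $r(g)=s(k)$, and $L_{g^{-1}}L_{k^{-1}}=L_{(kg)^{-1}}$. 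Substituting $g'=kg$ and invoking left-invariance $\int_{G^{s(k)}}\varphi(kg)\,d\la^{s(k)}(g)=\int_{G^{r(k)}}\varphi(g')\,d\la^{r(k)}(g')$ gives exactly $\int_{G^{r(k)}}L_{g'}P^{s(g')}L_{g'^{-1}}h\,d\la^{r(k)}(g')=\text{Av}(P)^{r(k)}h$, which is \eqref{eq:rgd}. Together with the symbol/kernel estimates above this shows $\text{Av}(P)\in\Psi^m_{\rho,\de}(X;E,F)$.

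The main obstacle I anticipate is the first step: establishing rigorously that integrating the $g$-dependent local symbol $b(g,x,z,\xi)$ against the Haar measure yields a genuine $S^m_{\rho,\de}$-symbol with the correct $C^{\infty,0}$-dependence on the base point, and that this operation commutes with the oscillatory-integral definition of the pseudodifferential operator. This requires differentiating under the integral sign and dominating derivatives of $b$ uniformly over the compact $g$-support — precisely the kind of estimate furnished by Proposition~\ref{prop:cont} and the Shubin-style bounds recalled in Section~4 — but it must be assembled with care, including checking that only finitely many charts in a locally finite cover contribute near any compact set, so that the local pieces patch to a global pseudodifferential family. The properness and invariance arguments, by contrast, are essentially formal manipulations with supports and with the left-invariance identity for $\{\la^u\}$.
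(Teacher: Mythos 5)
Your proposal is correct and follows essentially the same route as the paper: localize near a point, use the compactness of the relevant set of $g$'s (properness of the action plus compactness of $\mathrm{supp}\,P$) to reduce to finitely many charts in $G$, and integrate the local symbols $b(g,x,z,\xi)$ furnished by Proposition~\ref{prop:wdd} against the Haar measure to land in $S^{m}_{\rho,\de}$ --- the paper implements the patching you flag as the main obstacle via a finite cover $Z_{g_{1}},\dots,Z_{g_{n}}$ and a partition of unity $\{\phi_{i}\}$, yielding $a=\sum_{i}\int\phi_{i}(g)b_{i}(g,x,y,\xi)\,d\la^{p(x)}(g)$. Your explicit properness argument (via the saturation $G\cdot\mathrm{supp}\,P$ under the diagonal action) and the left-invariance computation are sound fillings-in of the steps the paper dismisses as ``easy to check.''
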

\begin{proof}
Let $B'=\text{supp } P$.  By hypothesis, $B'$ is compact in $X*X$.  
Let $x_{0}\in X$ and $C$ be a fixed compact neighborhood of $x_{0}$ in $X$. We can suppose that $C$ is included in the domain of a chart.  By the properness of the $G$-action on $X$, the set $A=\{(g,y)\in G*_{r}X: (g^{-1}y,y)\in p_{2}(B')\x C\}$ is compact.  For $u\in G^{0}$, let $A^{u}=p_{1}(A)\cap G^{u}$.  For each $g'\in A^{p(x_{0})}$,  there exist charts $Z_{g'}, U_{g'}, V_{g'}$ as in the proof of Proposition~\ref{prop:wdd} with 
$g'\in Z_{g'}, x_{0}\in U_{g'}\subset C^{0}$, the interior of $C$.  In terms of local coordinates on $U_{g'}$, the pdo family
$\widetilde{P}$ is given by continuous function $g\to b^{g'}(g,x,y,\xi)$ with values in 
$S^{m}_{\rho,\de}$.  Cover the compact set $A^{p(x_{0})}$ by a finite number $Z_{g_{1}}, \ldots ,Z_{g_{n}}$ of the 
$Z_{g'}$'s.  Let $U=\cap_{i=1}^{n}U_{g_{i}}$.
Then for some compact neighborhood $T$ of $r(x_{0})$ in $G^{0}$, by the compactness of $A$, we have $A^{u}\subset \cup_{i=1}^{n}Z_{g_{i}}=Z$ for all $u\in T$.  By contracting $U$, we
can suppose that 
$p(U)\subset T$ and $U$ is the domain of a chart of $X$.  We now show that on $U$, $\text{Av}(P)$ is a pdo family.  Let $B=\cup_{u\in T}A^{u}\subset Z$.  Then $B$ is compact, and there exist
$\phi_{i}\in C_{c}^{\infty,0}(Z_{g_{i}})$ such that $\sum_{i=1}^{n}\phi_{i}=1$ on $B$.  Then 
for $x\in U$, $h\in C_{c}^{\infty,0}(U,E_{U})$ and in terms of local coordinates,
\beqns
\text{Av}(P)(h)(x)&=&\iint_{A} gK(g^{-1}x,g^{-1}y)g^{-1}h(y)\,(d\la^{p(x)}\x d\mu^{p(x)})(g,y)\\
&=& \sum_{i=1}^{n}\int \phi_{i}(g)L_{g}P^{s(g)}L_{g^{-1}}h(x)\,d\la^{p(x)}(g)\\
&=& (2\pi)^{-k}\iint e^{\imath(x-y).\xi}a(x,y,\xi)h(y)\,dy\,d\xi
\eeqns
where in an obvious notation (cf. Proposition~\ref{prop:wdd})
\[ a(x,y,\xi)=\sum_{i=1}^{n}\int\phi_{i}(g)b_{i}(g,x,y,\xi)\,d\la^{p(x)}(g)\in S^{m}_{\rho,\de}.  \]
It is easy to check properness and invariance for $\text{Av}(P)$.
\end{proof}

\begin{proposition}          \label{prop:pavf}
Let $D\in \Psi^{m}_{\rho,\de}(X;E,F)$, $X$ be $G$-compact and $c$ be a cut-off function for 
$X$ (Proposition~\ref{prop:cgx}).  Then the family $P=cD$ has compact support and 
$D=\text{Av}(P)$.
\end{proposition}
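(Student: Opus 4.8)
The plan is to verify the two assertions in turn: first that $P=cD$ is a compactly supported pseudodifferential family, so that Proposition~\ref{prop:av} applies to it; and second that $\text{Av}(P)=D$, which reduces to a short computation once the invariance of $D$ and the normalization (\ref{eq:cgm2}) of the cut-off function are brought in. For the first claim, $P=cD$ is a pseudodifferential family because multiplication by $c$ is a properly supported family in $L^{0}_{\rho,\de}$ (its symbol, independent of the covariable, lies in every $S^{0}_{\rho,\de}$), so composing it with $D$ leaves the order unchanged; equivalently one invokes the remark after Definition~\ref{def:psdo} that $\phi D\psi$ is a pseudodifferential family for $\phi,\psi\in C^{\infty,0}_{c}(X)$. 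Since the kernel of $P^{u}$ is $K^{u}_{P}(x_{1},x_{2})=c(x_{1})K^{u}_{D}(x_{1},x_{2})$, we have $\text{supp }P\subseteq \text{supp }D\cap(\text{supp }c\times X)$, and the right-hand side is the preimage of the compact set $\text{supp }c$ under the first-coordinate projection $\text{supp }D\to X$. That projection is proper because $D$ is proper, so its preimage is compact, and $\text{supp }P$, being a closed subset of it, is compact. Proposition~\ref{prop:av} then gives $\text{Av}(P)\in\Psi^{m}_{\rho,\de}(X;E,F)$.

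For the identity $\text{Av}(P)=D$, the key observation is that $L_{g}$ intertwines multiplication by $c^{s(g)}$ with multiplication by the function $x\mapsto c(g^{-1}x)$ on $X^{r(g)}$: for a section $\psi$ of $\mathcal{E}^{s(g)}$ and $x\in X^{r(g)}$,
\[
L_{g}(c^{s(g)}\psi)(x)=g[c(g^{-1}x)\psi(g^{-1}x)]=c(g^{-1}x)\,(L_{g}\psi)(x),
\]
by linearity of $g:E^{g^{-1}x}\to E^{x}$. Taking $\psi=D^{s(g)}L_{g^{-1}}h$ and using the invariance (\ref{eq:rgd}) of $D$ together with $r(g)=p(x)$ for $g\in G^{p(x)}$, one gets
\[
L_{g}P^{s(g)}L_{g^{-1}}h(x)=c(g^{-1}x)\,(L_{g}D^{s(g)}L_{g^{-1}}h)(x)=c(g^{-1}x)\,(D^{p(x)}h)(x).
\]
The factor $(D^{p(x)}h)(x)$ does not depend on $g$, so substituting into (\ref{eq:av}) and using (\ref{eq:cgm2}) yields
\[
\text{Av}(P)h(x)=(D^{p(x)}h)(x)\int_{G^{p(x)}}c(g^{-1}x)\,d\la^{p(x)}(g)=(D^{p(x)}h)(x)=Dh(x).
\]
(The integrand is compactly supported in $g$ because $c$ has compact support and the $G$-action on $X$ is proper; this is what makes (\ref{eq:av}) and the above interchange legitimate, and it is already implicit in Proposition~\ref{prop:av}.)

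The only real obstacle is bookkeeping — keeping track of which fibers $X^{s(g)}$, $X^{r(g)}$ the various operators and sections live on, and the precise way $L_{g}$ conjugates the multiplication operators $c^{u}$. There is no genuine analytic difficulty; the proposition is essentially the groupoid form of the Connes--Moscovici device whereby a compactly supported operator is averaged against a cut-off to produce an invariant one, and conversely.
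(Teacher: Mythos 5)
Your argument is correct and is essentially the paper's own proof: the paper simply defers to the argument of Connes--Moscovici (\cite[Lemma 1.2]{CoMos}), which is exactly the computation you carry out --- compactness of $\text{supp}\,P$ from properness of $D$ plus compactness of $\text{supp}\,c$, and then $L_{g}P^{s(g)}L_{g^{-1}}h(x)=c(g^{-1}x)D^{p(x)}h(x)$ followed by integration against $\la^{p(x)}$ and the normalization (\ref{eq:cgm2}). Your write-up supplies the details the paper leaves to the citation; nothing is missing.
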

\begin{proof}
The argument of \cite[Lemma 1.2]{CoMos} gives the result.
\end{proof}   

For the theory presented here, it is crucial that $X$ be $G$-compact.  Indeed, if not, then there are no invariant, elliptic families of pdo's on $X$ with $G$-compact support.
Let $X$ be $G$-compact and write $S^{m}_{\rho,\de}=S^{m}_{\rho,\de}(T^{*}X;\text{ Hom }(\pi^{*}E,\pi^{*}F))$.  
The principal symbol for an invariant family $D$ can be usefully expressed in terms of invariant symbols as follows.

An element $a\in S_{\rho,\de}^{m}$ is called {\em invariant} if $g.a^{s(g)}=a^{r(g)}$ for all $g\in G$,
i.e. if for all $g\in G$, $x\in X^{r(g)}$ and $(x,\xi)\in T^{*}X$, we have 
$ga(g^{-1}x,g^{-1}\xi)g^{-1}=a(x,\xi)$ where $g^{-1}\xi=(^{t}\ell_{g^{-1}}')^{-1}\xi$.
Let $S^{m,i}_{\rho,\de}$ be the set of invariant elements of $S^{m}_{\rho,\de}$.  Let $D\in \Psi^{m}_{\rho,\de}(X;E,F)$.  Write $D=\text{Av}(P)$ as in Proposition~\ref{prop:pavf}.
A partition of unity argument shows that there exists 
$p(x,\xi)\in S^{m}_{\rho,\de}$, compactly supported in $x$, such that locally, 
$(\si_{P}-p)\in S^{m-2(\rho -1/2)}_{\rho,\de}$.  Now locally, using (\ref{eq:*}), we have 
that $\si_{\widetilde{P}} - \tilde{p}\in S_{\rho,\de}^{m-2(\rho-1/2)}$ where 
$\tilde{p}^{g}=g.p^{s(g)}$.  It is left to the reader to check that 
$a=\text{Av}(p)\in S^{m,i}_{\rho,\de}$, where 
$\text{Av}(p)=\int \tilde{p}^{g}(x,\xi)\,d\la^{p(x)}(g)$, and locally, 
$(\si_{D}-a)\in S^{m-2(\rho -1/2)}_{\rho,\de}$.  Define
\[  \si(D)=[a]=a+S^{m-2(\rho -1/2),i}_{\rho,\de}\in 
S^{m,i}_{\rho,\de}/S^{m-2(\rho -1/2),i}_{\rho,\de}. \]  
Then $\si(D)$ is independent of the choice of $a$.  The class $\si(D)$ is called the {\em principal symbol} of $D$.  (This determines the principal symbol of $D$ in  
$S^{m}_{\rho,\de}/S^{m-2(\rho -1/2)}_{\rho,\de}$ discussed earlier.)

Every element of $S^{m,i}_{\rho,\de}/S^{m-2(\rho -1/2),i}_{\rho,\de}$ is the principal symbol 
of some $D\in \Psi^{m}_{\rho,\de}(X;E,F)$.  Indeed, given $a\in S^{m,i}_{\rho,\de}$ we let
$p(x,\xi)=c(x)a(x,\xi)$, and $P\in L^{m}_{\rho,\de}(X;E,F)$ where $P$ has compactly supported kernel and locally, $(\si_{P}-p)\in S^{m-2(\rho -1/2)}_{\rho,\de}$.  Then $\si(D)=[a]$ where
$D=\text{Av}(P)$.  So we have the following theorem.

\begin{theorem}        \label{th:si}
Let $X$ be $G$-compact.  Then the map $\si:\Psi^{m}_{\rho,\de}(X;E,F)\to S^{m,i}_{\rho,\de}/S^{m-2(\rho -1/2),i}_{\rho,\de}$
is onto.
\end{theorem}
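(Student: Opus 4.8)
The plan is to construct an explicit right inverse to $\si$: quantize a given invariant symbol, cut it down to compact support, and average over $G$; the invariance of the symbol together with the defining property of the cut-off function will force the averaging to recover the class we started with.

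First I would fix a representative $a\in S^{m,i}_{\rho,\de}$ of a given class in $S^{m,i}_{\rho,\de}/S^{m-2(\rho-1/2),i}_{\rho,\de}$ and, using the cut-off function $c\in C^{\infty,0}_{c}(X)$ of Proposition~\ref{prop:cgx}, set $p(x,\xi)=c(x)a(x,\xi)$. Since $c$ has compact support, $p$ is compactly supported in $x$ and $p\in S^{m}_{\rho,\de}$. The next step is a standard quantization: over a finite collection of charts $U_{j}\thicksim p(U_{j})\x W_{j}$ trivializing $E$ and $F$ and covering the support of $p$ in $x$, form the family of Fourier integral operators with amplitude a localized version of $p$ (as in (\ref{eq:pdo})), cut these down by functions in $C^{\infty,0}_{c}(U_{j})$ summing to $1$ near that support, and assemble the pieces. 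This produces $P\in L^{m}_{\rho,\de}(X;E,F)$ with compactly supported distributional kernel such that locally $\si_{P}-p\in S^{m-2(\rho-1/2)}_{\rho,\de}$, which is exactly the situation analyzed in the discussion of the principal symbol preceding the theorem.

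Then I would apply Proposition~\ref{prop:av}: since $P$ is compactly supported, $D:=\text{Av}(P)\in\Psi^{m}_{\rho,\de}(X;E,F)$ is invariant and proper. It remains to identify $\si(D)$. By that discussion, $\si(D)=[\text{Av}(p)]$, where $\text{Av}(p)(x,\xi)=\int\tilde{p}^{g}(x,\xi)\,d\la^{p(x)}(g)$ and $\tilde{p}^{g}(x,\xi)=g\,p^{s(g)}(g^{-1}x,g^{-1}\xi)\,g^{-1}$. Substituting $p=c\cdot a$ and invoking the invariance of $a$, i.e. $g\,a(g^{-1}x,g^{-1}\xi)\,g^{-1}=a(x,\xi)$, one gets $\tilde{p}^{g}(x,\xi)=c(g^{-1}x)\,a(x,\xi)$ (the scalar $c(g^{-1}x)$ passes through the conjugation), whence
\[
\text{Av}(p)(x,\xi)=\left(\int_{G^{p(x)}}c(g^{-1}x)\,d\la^{p(x)}(g)\right)a(x,\xi)=a(x,\xi)
\]
by (\ref{eq:cgm2}). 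Hence $\si(D)=[a]$, and $\si$ is onto.

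I expect the quantization step to be the main obstacle: producing an honest $P\in L^{m}_{\rho,\de}(X;E,F)$ in the sense of Definition~\ref{def:psdo}, with genuinely compactly supported kernel and local symbols matching $p$ modulo order $m-2(\rho-1/2)$. This is routine on a single manifold, but here one must check that patching the local Fourier integral operators by a $C^{\infty,0}$ partition of unity respects the continuous-family structure (so that the amplitudes lie in $S^{m}_{\rho,\de}(p(U_{j})\x W_{j}\x\R^{N})$ with continuous dependence on the base point) and that the vector-bundle trivializations $\al,\bt$ of (\ref{eq:du1}) are handled coherently. Once $P$ is in hand, the averaging and the symbol computation are immediate from Proposition~\ref{prop:av} and the invariance of $a$.
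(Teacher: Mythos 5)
Your proposal is correct and follows essentially the same route as the paper: set $p(x,\xi)=c(x)a(x,\xi)$ with $c$ the cut-off function of Proposition~\ref{prop:cgx}, quantize to a compactly supported $P$ with local symbol $p$ modulo $S^{m-2(\rho-1/2)}_{\rho,\de}$, and take $D=\text{Av}(P)$, so that $\si(D)=[\text{Av}(p)]=[a]$ by the invariance of $a$ and (\ref{eq:cgm2}). Your explicit verification that $\tilde{p}^{g}(x,\xi)=c(g^{-1}x)a(x,\xi)$ is a worthwhile detail the paper leaves to the reader, and your flagged concern about the quantization step is legitimate but routine, exactly as you suspect.
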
 


\section{The equivariant analytic index} 

\newcommand{\bga}{\textbf{B}(\gah)}
\newcommand{\kga}{\textbf{K}(\gah)}
\newcommand{\bmfH}{\textbf{B}(\mfH)}
\newcommand{\kmfH}{\textbf{K}(\mfH)}
\newcommand{\bgga}{\textbf{B}_{G}(\mfH)}
\newcommand{\kgga}{\textbf{K}_{G}(\mfH)}
\newcommand{\gach}{\Ga_{c}(\mfH)}
\newcommand{\gah}{\Ga(\mfH)}

We now discuss ellipticity for a pseudodifferential family $D$.  The notion is an adaptation of that given by Shubin (\cite[\S 5]{Shubin}).  Let $Y=T\x W$ and $D\in 
L^{m}_{\rho,\de}(Y;Y\x \C^{p},Y\x \C^{q})$.  We say that $D$ is {\em elliptic} if there exists a properly supported $D'\in L^{m}_{\rho,\de}(Y;Y\x \C^{p},Y\x \C^{q})$
and $R'\in L^{-\infty}(Y;Y\x \C^{p},Y\x \C^{q})$ such that $D=D'+R'$ and for arbitrary $C\in \mathcal{C}(Y)$, there exist
positive $R, C_{1}, C_{2}$ such that for
$\left|\xi\right|\geq R$ and $x\in C$, we have 
\begin{equation}         \label{eq:ellpdo}
C_{1}\left|\xi\right|^{m}\leq \left|\si_{D'}(x,\xi)\right|\leq C_{2}\left|\xi\right|^{m}. 
\end{equation}
More generally,  
$D\in L^{m}_{\rho,\de}(X;E,F)$ is called {\em elliptic} if locally, every $(D_{U})_{1}$ is elliptic.
By the continuous version of \cite[Proposition 5.5]{Shubin},
elliptic operators are invariant with respect to change of variables. 

For the rest of the paper, $G$ is a continuous family groupoid acting properly on a $G$-compact, $C^{\infty,0}$ $G$-space $(X,p)$. Further, $\{\la^{u}\}$ is a $C^{\infty,0}$ left Haar system on $G$ and $\{\mu^{u}\}$ is a $C^{\infty,0}$ left Haar system on $X$.  Also, $E, F$ are $C^{\infty,0}$ $G$-vector bundles over $X$, and 
$u\to \left< , \right>^{u}$ stands for a $G$-isometric metric on each of $E, F$.

\begin{theorem} \label{th:para}
Let $D$ be an elliptic element of $\Psi^{m}_{\rho,\de}(X;E,F)$. 
Then there exists an elliptic family $Q\in \Psi_{\rho,\de}^{-m}(X;F,E)$ such that 
\[  QD=I+R_{1} \hspace{.2in} DQ=I+R_{2} \]
where $R_{1}, R_{2}$ are $\Psi^{-\infty}$-pseudodifferential families.
\end{theorem}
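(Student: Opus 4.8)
The plan is to construct $Q$ by the standard parametrix method, but carried out equivariantly using the averaging technique of Proposition~\ref{prop:av} and Proposition~\ref{prop:pavf}. First I would work non-equivariantly on the "model" family. Since $D$ is an invariant, proper, elliptic pseudodifferential family, write $D = \text{Av}(P)$ as in Proposition~\ref{prop:pavf}, where $P = cD$ has compact support and $c$ is a cut-off function for the $G$-compact space $X$. The principal symbol $\si(D) \in S^{m,i}_{\rho,\de}/S^{m-2(\rho-1/2),i}_{\rho,\de}$ is invertible in the sense of \eqref{eq:ellpdo}: locally there is a symbol $b \sim \si(D)^{-1}$ lying in $S^{-m}_{\rho,\de}$. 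Using Theorem~\ref{th:si}, there exists $Q_0 \in \Psi^{-m}_{\rho,\de}(X;F,E)$ with $\si(Q_0) = [\si(D)]^{-1}$ (the class inverse in the symbol algebra). The composability statements for pseudodifferential families (properness of one factor suffices, as recalled in \S4 following \cite[Prop.~3.3]{Shubin}) give $Q_0 D, D Q_0 \in \Psi^0_{\rho,\de}$, and the multiplicativity of the principal symbol together with invariance of everything in sight yields $\si(Q_0 D) = \si(D Q_0) = I$ modulo $S^{-2(\rho-1/2),i}_{\rho,\de}$. Hence $Q_0 D = I + S_1$ and $D Q_0 = I + S_2$ with $S_1 \in \Psi^{-2(\rho-1/2)}_{\rho,\de}(X;E,E)$ and $S_2 \in \Psi^{-2(\rho-1/2)}_{\rho,\de}(X;F,F)$.

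Next I would run the Neumann-series improvement inside the invariant calculus. Because $2(\rho-1/2) > 0$ by \eqref{eq:rhode}, the operator $S_1$ has strictly negative order, and the usual asymptotic-summation device — here the continuous-families version of \cite[Prop.~3.5]{Shubin} quoted in \S4, applied fiberwise and then averaged — produces an invariant family $T \in \Psi^0_{\rho,\de}(X;E,E)$ with $T \sim \sum_{k\ge 0} (-S_1)^k$, so that $(I + S_1)(I + T) = I + R$ with $R \in \Psi^{-\infty}$. One has to check that the asymptotic sum can be taken invariant: this follows because each $(-S_1)^k$ is invariant (composition of invariant families is invariant, using the intertwining relation \eqref{eq:rgd} for each factor and that $L_g$ is multiplicative), and the asymptotic-summation construction, performed on $P' = cT$ of compact support and then averaged by $\text{Av}$, respects invariance just as in Proposition~\ref{prop:av}. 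Setting $Q_1 = (I+T)Q_0 \in \Psi^{-m}_{\rho,\de}(X;F,E)$ gives $Q_1 D = I + R'_1$ with $R'_1 \in \Psi^{-\infty}$, i.e. a genuine left parametrix. A symmetric argument (or the standard trick $Q := Q_1 + R'_1 Q_1$, using that a left and a right parametrix differ by a smoothing family) upgrades $Q_1$ to a two-sided parametrix $Q$ satisfying $QD = I + R_1$, $DQ = I + R_2$ with $R_1, R_2$ in $\Psi^{-\infty}$.

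Finally I would verify that $Q$ is itself elliptic: its principal symbol is $[\si(D)]^{-1}$, which satisfies the reciprocal two-sided estimate $C_2^{-1}|\xi|^{-m} \le |\si_{Q'}(x,\xi)| \le C_1^{-1}|\xi|^{-m}$ from \eqref{eq:ellpdo} applied to $D$, so $Q$ is elliptic in the sense defined above; and $Q$ is proper and invariant by construction, hence $Q \in \Psi^{-m}_{\rho,\de}(X;F,E)$.

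The main obstacle is bookkeeping the interaction between the averaging operation $\text{Av}$ and the asymptotic-summation / Neumann-series steps: one must confirm that each classical construction (parametrix symbol inversion, asymptotic summation of a series of decreasing orders, composition) can be performed on a compactly supported representative and then averaged, and that the averaged result lies in $\Psi^{\bullet}_{\rho,\de}$ with the claimed order and the invariance relation \eqref{eq:rgd}. All the needed ingredients are in place — Proposition~\ref{prop:av} (averaging lands in $\Psi^m_{\rho,\de}$ and preserves properness and invariance), Proposition~\ref{prop:pavf} ($D = \text{Av}(cD)$), Theorem~\ref{th:si} (surjectivity of the symbol map onto invariant symbols), and the continuous-families versions of the composition and asymptotic-summation lemmas of \S4 — so the argument is a matter of assembling these carefully rather than proving anything genuinely new; the only subtlety is making sure the orders track correctly through the averaging, which they do because $\text{Av}$ does not change the order of a local symbol.
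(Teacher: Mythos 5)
Your argument is correct in outline, but it takes a genuinely different route from the paper. You build the parametrix entirely inside the invariant calculus: invert the invariant symbol class and realize the inverse by Theorem~\ref{th:si}, then run the Neumann/asymptotic-summation improvement while re-checking invariance at every stage (invariance of each power $(-S_1)^k$, an invariant asymptotic summation obtained by cutting off and averaging). The paper does the opposite: it first produces a full non-equivariant parametrix $Q'$ with $Q'D=I+R_1'$, $DQ'=I+R_2'$ by simply running the continuous-families version of \cite[Theorem 5.1]{Shubin}, and only then makes it invariant in a single stroke by setting $Q=\text{Av}(cQ')$ as in \cite[Proposition 1.3]{CoMos}. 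The point you never exploit, and which makes the paper's version much shorter, is the identity $\text{Av}(A)D=\text{Av}(AD)$ for invariant $D$ (push $D^{r(g)}$ through $L_{g^{-1}}$ using \eqref{eq:rgd}): it gives $QD=\text{Av}(cQ'D)=\text{Av}(c)+\text{Av}(cR_1')=I+\text{Av}(cR_1')$ at once, so the entire classical construction transfers with one averaging at the end. Your route buys a parametrix whose symbol-level description is manifestly $[\si(D)]^{-1}$ and showcases Theorem~\ref{th:si}, but at the cost of several verifications you leave loose: the uniformity (over the $G$-compact $X$) of the constants $R, C_1$ needed to define the inverse invariant symbol globally; the order of $S_1$, which is really $-(\rho-\de)$ rather than $-2(\rho-1/2)$ (harmless, since both are negative); and the invariant asymptotic summation, which as written is circular ($P'=cT$ presupposes $T$) and should instead be phrased as asymptotically summing the compactly supported pieces $c(-S_1)^k$ and then applying $\text{Av}$, using that Proposition~\ref{prop:av} preserves order and is compatible with remainders. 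None of these is a fatal gap, but each is exactly the kind of bookkeeping the paper's one-shot averaging avoids.
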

\begin{proof}
One follows the proof of \cite[Theorem 5.1]{Shubin}) to obtain a ``continuous family'' parametrix $Q'$ and $L^{-\infty}$-families $R_{1}', R_{2}'$ such that
$Q'D=I+R_{1}', DQ'=I+R_{2}'$.  Then following the argument of the proof of \cite[Proposition 1.3]{CoMos}, we can take $Q=\text{Av}(cQ')$ where $c$ is a cut-off function.  
\end{proof}

We now discuss the boundedness of invariant families.  To this end, we want to reduce our considerations to the case where $m=0$. We consider the invariant symbol
$d$ given by:
\[   d(x,\xi)=(1+\norm{\xi}^{2})^{-m/2}                  \]
where $\norm{.}$ is the norm function on $T^{*}X$ determined by an invariant, $C^{\infty,0}$,
hermitian, metric.  (This gives a non-trivial example of a $G$-invariant elliptic pseudodifferential family on $X$.  Other examples will be given in \S 6.)
Let $D\in \Psi^{m}_{\rho,\de}(X;E,F)$ and
$a$ be an invariant symbol for $D$.  Then $ad\in S^{0,i}_{\rho,\de}$.  By Theorem~\ref{th:si}, the principal symbol map $\si$ is onto.  So there exists an element
$L\in \Psi^{0}_{\rho,\de}(X;E,F)$ whose principal symbol is $[ad]$.
We then define (cf. \cite{Kasp1}) the analytic index of $D$ to be that of $L$, and so can suppose from the outset that $D\in \Psi_{\rho,\de}^{0}(X;E,F)$.  (Note that the choice of $L$ does not matter since, as in the families case of Atiyah and Singer (\cite[p.127]{AS4}), the analytic index depends only on the symbol class (Proposition~\ref{prop:indsymb}).)

\begin{proposition}        \label{prop:cont1}
Let $W$ be an open subset of $\R^{k}$ and $A\in 
L^{0}_{\rho,\de}(W;W\x \C^{p},W\x \C^{q})$ where $A$ is compactly supported.  Then $A$ extends to a bounded linear operator $A'$, from $L^{2}(W,\C^{p})$ to $L^{2}(W,\C^{q})$, and the map $A\to A'$ is continuous.  Further, if 
in addition $A\in L^{m}_{\rho,\de}(W;W\x \C^{p},W\x \C^{q})$ where $m<0$, then $A'$ is compact.
\end{proposition}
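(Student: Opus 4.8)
The plan is to deduce both assertions from the classical $L^{2}$-boundedness theorem of Calder\'{o}n--Vaillancourt for $S^{0}_{\rho,\de}$-operators, which applies since the standing hypothesis (\ref{eq:rhode}) forces $0\leq\de<\rho\leq 1$; the key feature is that the bound this theorem supplies is a \emph{finite sum of symbol seminorms}, so the asserted continuity comes out automatically. The first step is to transport $A$ to $\R^{k}$. Because $A$ is compactly supported, its Schwartz kernel is supported in a fixed compact subset of $W\x W$, so $A$ is properly supported and hence, by (\ref{eq:asi}) and the text following it (cf.\ \cite[pp.~21--25]{Shubin}), has an honest complete symbol: a $q\x p$ matrix $\si_{A}$ of elements of $S^{0}_{\rho,\de}(W\x\R^{k})$ with $Af=\mathrm{Op}(\si_{A})f$ for all $f\in C_{c}^{\infty}(W,\C^{p})$, where $\mathrm{Op}$ denotes left quantization. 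Since $\si_{A}(x,\cdot)$ is determined by $K_{A}(x,\cdot)$, it vanishes whenever $x$ lies outside the first projection $B_{1}$ of $\text{supp}\,A$, a compact subset of $W$; thus $\si_{A}$ extends by zero to an element of $S^{0}_{\rho,\de}(\R^{k}\x\R^{k})$, and the operator $\mathrm{Op}(\si_{A})$ on $\R^{k}$ agrees with $A$ on $C_{c}^{\infty}(W,\C^{p})$ and maps $L^{2}(\R^{k},\C^{p})$ into functions supported in $B_{1}$.

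Next, Calder\'{o}n--Vaillancourt, applied entrywise, furnishes an integer $N=N(k)$ and a constant $c_{k}$ with $\norm{\mathrm{Op}(b)}\leq c_{k}\sum_{|\al|,|\bt|\leq N}\norm{b}_{\al,\bt,\R^{k}}$ for every $b\in S^{0}_{\rho,\de}(\R^{k}\x\R^{k})$, the left-hand side being the $L^{2}(\R^{k},\C^{p})\to L^{2}(\R^{k},\C^{q})$ operator norm and the seminorms on the right being those of (\ref{eq:dalbt}) with $K=\R^{k}$. Applying this with $b=\si_{A}$ and restricting $\mathrm{Op}(\si_{A})$ to $L^{2}(W,\C^{p})$, embedded by extension by zero, produces a bounded operator $A'$ which extends $A$ on the dense subspace $C_{c}^{\infty}(W,\C^{p})$ and is the unique such extension. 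Since the right-hand side above is a finite sum of continuous seminorms and $A\mapsto\si_{A}$ is continuous on the space of compactly supported order-zero operators with kernels in a fixed compact set, the assignment $A\mapsto A'$ is continuous, the target carrying the operator norm.

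Finally, suppose in addition that $A\in L^{m}_{\rho,\de}$ with $m<0$, so that $\si_{A}\in S^{m}_{\rho,\de}$. Fix $\chi\in C_{c}^{\infty}(\R^{k})$ with $\chi\equiv 1$ near the origin and put $\si_{n}(x,\xi)=\chi(\xi/n)\si_{A}(x,\xi)$. A routine Leibniz estimate, using $\rho\leq 1$ and $m<0$, gives $\si_{n}\to\si_{A}$ in $S^{0}_{\rho,\de}(\R^{k}\x\R^{k})$. Each $\si_{n}$ is compactly supported in both $x$ and $\xi$, so the kernel of $\mathrm{Op}(\si_{n})$ is Schwartz in its second variable and compactly supported in its first, hence lies in $L^{2}(\R^{k}\x\R^{k})$; thus $\mathrm{Op}(\si_{n})$ is Hilbert--Schmidt, in particular compact. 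By the estimate of the previous paragraph, $\norm{\mathrm{Op}(\si_{n})-\mathrm{Op}(\si_{A})}\to 0$, so $A'$ is a norm limit of compact operators, hence compact.

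The only step needing genuine care is the first reduction: one must verify that passing from an amplitude $a(x,y,\xi)$ to the complete symbol $\si_{A}$ --- legitimate precisely because $A$ is properly supported --- yields a symbol that is still compactly supported in $x$, so that the single-operator Calder\'{o}n--Vaillancourt theorem applies on $\R^{k}$ verbatim. Everything after that --- the boundedness, which is quoted, and the continuity and compactness, which are formal consequences of its seminorm form together with the truncation argument --- is routine.
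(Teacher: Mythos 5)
Your proof is correct. For the boundedness statement it is essentially the argument the paper intends: the paper's proof simply cites H\"{o}rmander's ``very simple proof of $L^{2}$ continuity'' (\cite[p.75]{Hor}), which in the regime $\de<\rho$ forced by (\ref{eq:rhode}) is exactly the Calder\'{o}n--Vaillancourt-type estimate you invoke, with the operator norm dominated by finitely many symbol seminorms. Your preliminary reduction --- compact kernel support implies proper support, hence a complete symbol $\si_{A}$ vanishing for $x$ outside the compact first projection of $\text{supp}\,A$, which therefore extends by zero to an element of $S^{0}_{\rho,\de}(\R^{k}\x\R^{k})$ --- is precisely the bookkeeping needed to apply that single-operator theorem verbatim, and the continuity of $A\mapsto A'$ then falls out of the seminorm form of the bound, provided (as you note, and as the intended application in Corollary~\ref{cor:Abdd} requires) one fixes a compact set containing all the kernel supports so that the local seminorms become global ones. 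Where you genuinely diverge from the paper is the compactness statement: the paper disposes of it by citing \cite[Corollary 6.1]{Shubin}, whereas you give a self-contained argument by truncating the symbol in $\xi$ with $\chi(\xi/n)$, checking via the Leibniz rule and $m<0$ that the truncations converge to $\si_{A}$ in the $S^{0}_{\rho,\de}$ seminorms (so the corresponding operators converge in norm by the first part), and observing that each truncated operator is Hilbert--Schmidt since its kernel is compactly supported in $x$ and Schwartz in $x-y$. Your route costs a page of routine estimates but makes the compactness claim independent of Shubin's \S 6; both arguments are standard, and yours has the minor additional virtue of exhibiting $A'$ explicitly as a norm limit of Hilbert--Schmidt operators, which is the form in which compactness is actually exploited later (in the proof of Proposition~\ref{prop:dinl}, where $\text{Av}(P)$ is approximated by finite-rank operators $\theta_{\xi,\eta}$).
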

\begin{proof}
The first part  follows from the ``very simple proof of $L^{2}$ continuity'' given by H\"{o}rmander in \cite[p.75]{Hor}.  The second part of the proposition follows from 
\cite[Corollary 6.1]{Shubin}.
\end{proof}
\begin{corollary}    \label{cor:Abdd}
Let $W$ be an open subset of $\R^{k}$, $Y=T\x W$, and $B\in L_{\rho,\de}^{0}(Y;Y\x \C^{p},Y\x \C^{q})$ have compact support $C\subset T\x W\x W$. Then the map $u\to (B^{u})'$ is continuous, and  $\sup_{u\in T}\norm{(B^{u})'}<\infty$.
\end{corollary}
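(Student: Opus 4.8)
The plan is to reduce everything to Proposition~\ref{prop:cont1} fibrewise, using the compactness of $\mathrm{supp}\,B$ to make the fibrewise estimates uniform in $u\in T$. First I would choose a symbol $a\in S^{0}_{\rho,\de}(T;W\x W\x\R^{k})$ representing $B$ as in (\ref{eq:pdo}) and replace it by $\chi a$, where $\chi\in C_{c}^{\infty,0}(T\x W\x W)$ is identically $1$ on a neighbourhood of $C=\mathrm{supp}\,B$; since, as an oscillatory integral, $\int e^{\imath(x-y)\cdot\xi}\chi(u,x,y)a(u,x,y,\xi)\,d\xi=\chi(u,x,y)K^{u}(x,y)=K^{u}(x,y)$ for every $(u,x,y)$ (the last equality because $K^{u}(x,y)=0$ whenever $(u,x,y)\notin C$), this does not alter $B$, and now $\mathrm{supp}_{t,x,y}\,a$ is compact. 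Consequently there is one compact set $K_{0}\subset W\x W$ with $\mathrm{supp}\,a^{u}\subset K_{0}\x\R^{k}$ for every $u$, a compact set $C_{T}\subset T$ with $a^{u}=0$ for $u\notin C_{T}$, and $u\mapsto a^{u}$ is continuous from $T$ into the Fr\'{e}chet space $S^{0}_{\rho,\de}(W\x W\x\R^{k})$. In particular each $B^{u}$ is a compactly supported element of $L^{0}_{\rho,\de}(W;W\x\C^{p},W\x\C^{q})$, so Proposition~\ref{prop:cont1} already produces the bounded operator $(B^{u})'$.

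The key observation is that the H\"{o}rmander argument underlying Proposition~\ref{prop:cont1} bounds $\norm{(B^{u})'}$ by $C_{K_{0}}\,\Phi(a^{u})$, where $\Phi(b)=\sum_{j=1}^{r}\norm{b}_{\al_{j},\bt_{j},K_{0}}$ is a fixed finite combination of symbol seminorms and $C_{K_{0}}$ depends only on $K_{0}$, not on $u$; moreover this estimate is linear in the symbol on the subspace of symbols supported in $K_{0}\x\R^{k}$. Applying it to $a^{u}-a^{u'}$ gives $\norm{(B^{u})'-(B^{u'})'}\leq C_{K_{0}}\,\Phi(a^{u}-a^{u'})$, so continuity of $u\mapsto a^{u}$ into $S^{0}_{\rho,\de}(W\x W\x\R^{k})$ yields norm-continuity of $u\mapsto(B^{u})'$ on all of $T$. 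For the uniform bound, $u\mapsto\Phi(a^{u})$ is continuous on $T$ and vanishes off the compact set $C_{T}$, hence is bounded; thus $\sup_{u\in T}\norm{(B^{u})'}\leq C_{K_{0}}\sup_{u\in C_{T}}\Phi(a^{u})<\infty$.

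The one step needing care is the passage to a symbol $a$ with $\mathrm{supp}_{t,x,y}\,a$ compact: this is precisely what forces the compact set $K_{0}$ and the seminorm bounds to be uniform in $u$ (a symbol merely lying in $S^{0}_{\rho,\de}(T;W\x W\x\R^{k})$ has no reason to have its seminorms bounded over a non-compact $T$). Once that reduction is made, the corollary is a routine fibration of Proposition~\ref{prop:cont1}; in particular one only needs to know that the $L^{2}$-bound there is linear in the symbol on symbols with support in a fixed compact set, which is exactly what the cited proof provides.
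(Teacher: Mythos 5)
Your argument is correct and is essentially the argument the paper intends: the corollary is stated without proof as an immediate consequence of Proposition~\ref{prop:cont1}, and your cutoff reduction to a symbol with compact $(t,x,y)$-support, followed by the fibrewise symbol-seminorm bound on the operator norm (applied to $a^{u}-a^{u'}$ via linearity of the quantization map) and the compactness of the $T$-projection of the support, is exactly the fleshing-out required.
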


Let $A\in L^{0}_{\rho,\de}(X;E,F)$.  For each $u\in G^{0}$, the map 
$A^{u}:C_{c}^{\infty}(X^{u},\mathcal{E}^{u})\to  C_{c}^{\infty}(X^{u},\mathcal{F}^{u})$.
We say that $A$ is {\em bounded} if each $A^{u}$ extends to a bounded linear operator, also denoted by $A^{u}: L^{2}(X^{u}, \mathcal{E}^{u})\to  L^{2}(X^{u}, \mathcal{F}^{u})$, and $\norm{A}=\sup_{u\in G^{0}}\norm{A^{u}}<\infty$.

\begin{proposition}     \label{prop:pbdd}
Let $P\in L_{\rho,\de}^{0}(X;E,F)$ be compactly supported.  Then $P$ is bounded.
\end{proposition}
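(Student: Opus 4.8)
The plan is to reduce the global boundedness statement to the local estimate already recorded in Corollary~\ref{cor:Abdd}, exploiting the fact that $P$ has \emph{compact} support in $X*X$. First I would cover the compact set $\text{supp }P\subset X*X$ by finitely many sets of the form $U_i*U_i$ where each $(U_i,\phi_i)$ is a chart of $X$ trivializing both $E$ and $F$, with $\phi_i(U_i)=p(U_i)\x W_i$ and $W_i$ open in $\R^k$. Using a $C^{\infty,0}$ partition of unity on $X$ subordinate to $\{U_i\}$, write $1=\sum_i\chi_i^2$ near the relevant compact region (shrinking to arrange that the kernel of $P$ is supported where the $\chi_i$ are defined), so that $P=\sum_{i,j}\chi_i P\chi_j$, and observe that only finitely many terms $\chi_i P\chi_j$ are nonzero and each has kernel supported in $U_i*U_j$. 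Because a pseudodifferential family remains one after multiplication by $C_c^{\infty,0}$ functions on each side (as noted just before Definition~\ref{def:psdo}), each $\chi_i P\chi_j$ is again a compactly supported element of $L^0_{\rho,\de}$; and since its kernel is supported in $U_i*U_j$ where everything is trivialized, in local coordinates it becomes an operator in $L^0_{\rho,\de}(Y;Y\x\C^p,Y\x\C^q)$ with $Y=p(U_i)\x W$ (for a common chart containing the supports) having compact support there.

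Next I would apply Corollary~\ref{cor:Abdd} to each of these finitely many local pieces $B=(\chi_i P\chi_j)_1$: it gives $\sup_{u}\norm{(B^u)'}=:M_{ij}<\infty$. Transporting back via the diffeomorphisms $\phi_i$, which at each fiber $u$ implement a bounded isomorphism between $L^2(X^u\cap U_i,\mathcal{E}^u)$ and $L^2(W,\C^p)$ with Jacobian factors that are bounded above and below uniformly in $u$ on the compact supports (using that the $C^{\infty,0}$ left Haar system $\mu^u$ has local Radon--Nikodym derivatives in $C^{\infty,0}$, hence bounded on compacta), one gets that each $(\chi_i P^u\chi_j)$ extends to a bounded operator on the fiber $L^2$-spaces with norm bounded by a constant independent of $u\in G^0$. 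Summing the finitely many contributions yields that $P^u$ extends to a bounded operator $L^2(X^u,\mathcal{E}^u)\to L^2(X^u,\mathcal{F}^u)$ with $\sup_{u\in G^0}\norm{P^u}\le\sum_{i,j}C_{ij}<\infty$, which is exactly the assertion that $P$ is bounded.

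The step I expect to be the main obstacle is the uniform-in-$u$ control of the change-of-coordinates and trivialization factors: one must check that the passage from the intrinsic fiber inner products $\lan\,,\,\ran^u$ on $\mathcal{E}^u,\mathcal{F}^u$ (coming from the $G$-isometric metrics) and the measures $\mu^u$ to the flat $L^2(W,\C^p),L^2(W,\C^q)$ introduces only factors bounded above and below by constants independent of $u$, at least over the relevant compact ranges. This is where compactness of $\text{supp }P$ and $G$-compactness of $X$ are used: finitely many charts suffice, the metric-to-flat comparison matrices $A(u,w)$ of~(\ref{eq:Auz}) are $C^{\infty,0}$ hence continuous on a compact set, and the Haar density $(d(\mu^u\circ\phi^u)/d\la^W)(u,w)$ is likewise $C^{\infty,0}$ and bounded away from $0$ and $\infty$ on compacta. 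Once this uniform equivalence of norms is in hand, the rest is the finite sum over the local pieces from Corollary~\ref{cor:Abdd}.
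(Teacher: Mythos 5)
Your proposal is correct and follows essentially the same route as the paper: cover the compact support of the kernel by finitely many chart products $U_i*U_i$ trivializing $E$ and $F$, localize, compare the fiberwise $L^2$-norms with the flat ones via trivialization and Haar-density factors that are bounded above and below on compacta uniformly in $u$, and invoke Corollary~\ref{cor:Abdd}. The only cosmetic difference is that the paper takes the partition of unity directly on $X*X$ subordinate to $\{U_i*U_i\}$ and splits the kernel as $\sum_i\phi_i K$, which avoids your cross terms $\chi_i P\chi_j$ ($i\ne j$), whose kernels need not lie in a single chart square.
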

\begin{proof} 
The kernel $K$ of $P$ has compact support $C'$ in $X*X$.  Cover $C'$ by a finite number $n$ of sets $U_{i}*U_{i}$ where $U_{i}$ is a chart in $X$ that trivializes both $E,F$.  Let $\phi_{i}\in 
C_{c}^{\infty,0}(U_{i}*U_{i})$ be such that $0\leq \phi_{i}\leq 1$ and $\sum_{i=1}^{n}\phi_{i}=1$ on $C'$.  Then $P$ is the sum of $P_{i}$'s, where the kernel of 
$P_{i}$ is $\phi_{i}K$, and we can suppose that $P$ is a $P_{i}$ and $U=U_{i}$.
To relate the norm of $P$ to that of its local coordinate version $(P_{U})_{1}$, we need to switch from $U, E_{U}, F_{U}$ to $p(U)\x W, p(U)\x W\x \C^{p}, p(U)\x W\x \C^{q}$.  To this end, we use 
(\ref{eq:du1}) and the Radon-Nikodym derivative $d(\mu^{u}\circ \phi^{u})/d\la^{W}(u,w)$ of 
Definition 4, (ii).  The main point here is that the maps involved (such as $\al$) only need to be considered on a compact set, where they are both bounded and bounded away from $0$.  So $P$ is bounded if and only if $(P_{U})_{1}$ is bounded.  But $(P_{U})_{1}$ is bounded by 
Corollary~\ref{cor:Abdd}.
\end{proof}

For the rest of the paper, $D$ will be an elliptic element of $\Psi^{0}_{\rho,\de}(X;E,F)$.

\begin{lemma}   \label{lemma:gbdd}
Let $f\in C_{c}(X)$.  Then 
$\sup_{x\in X}\left|\int f(g^{-1}x)\,d\la^{p(x)}(g)\right|=M_{f}<\infty$.
\end{lemma}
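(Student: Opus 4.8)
The plan is to reduce, using the left-invariance of $\{\la^{u}\}$ and the $G$-compactness of $X$, to bounding the integral over a compact set of base points, and then to use properness of the action together with the continuity built into $\{\la^{u}\}$. Write $S=\text{supp}\,f\in\mathcal{C}(X)$ and fix $B\in\mathcal{C}(X)$ with $X=GB$. First I would record that the function $x\mapsto\int_{G^{p(x)}}f(g^{-1}x)\,d\la^{p(x)}(g)$ is constant on $G$-orbits: for $x=hb$ with $b\in B$ (so $r(h)=p(x)$ and $s(h)=p(b)$), the substitution $g\mapsto hg$ and the left-invariance of $\{\la^{u}\}$, applied fibrewise exactly as in the proof of Proposition~\ref{prop:cgx}, give
\[
\int_{G^{p(x)}}f(g^{-1}x)\,d\la^{p(x)}(g)=\int_{G^{p(b)}}f(g^{-1}b)\,d\la^{p(b)}(g).
\]
Since every $G$-orbit meets $B$, the supremum over $x\in X$ equals the supremum over $x\in B$, so it suffices to bound the left-hand side for $x\in B$.

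Next I would invoke properness. The map $\theta:G*_{r}X\to X\x X$, $\theta(g,x)=(g^{-1}x,x)$, is proper, being the composite of the proper map $(g,x)\mapsto(gx,x)$ on $G*X$ with the homeomorphism $G*_{r}X\to G*X$, $(g,x)\mapsto(g^{-1},x)$. Hence $K:=\theta^{-1}(S\x B)$ is compact in $G*_{r}X$, and so is its image $D$ under the projection $(g,x)\mapsto g$. Choose $\psi\in C_{c}(G)$ with $0\le\psi\le 1$ and $\psi\equiv 1$ on $D$. For $x\in B$ and $g\in G^{p(x)}$: if $f(g^{-1}x)\ne 0$ then $g^{-1}x\in S$, so $(g,x)\in K$ and therefore $g\in D$, giving $\psi(g)=1$; hence $|f(g^{-1}x)|\le\norm{f}_{\infty}\psi(g)$ for all $g\in G^{p(x)}$. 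Integrating,
\[
\left|\int_{G^{p(x)}}f(g^{-1}x)\,d\la^{p(x)}(g)\right|\le\norm{f}_{\infty}\int_{G^{p(x)}}\psi(g)\,d\la^{p(x)}(g)=\norm{f}_{\infty}\,\psi^{0}(p(x)),
\]
and since $\psi^{0}\in C_{c}(G^{0})$ by the definition of a left Haar system, we conclude $M_{f}\le\norm{f}_{\infty}\norm{\psi^{0}}_{\infty}<\infty$.

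The only point needing a little care is the orbit-invariance in the first step: the $C_{c}(G)$ form of left-invariance does not literally apply, since $g\mapsto f(g^{-1}x)$ is defined only on the fibre $G^{p(x)}$, so one must use the fibrewise identity $\int_{G^{r(h)}}\Phi(g)\,d\la^{r(h)}(g)=\int_{G^{s(h)}}\Phi(hg)\,d\la^{s(h)}(g)$ — precisely the usage already made in Proposition~\ref{prop:cgx} — after noting that $g\mapsto f(g^{-1}x)$ has compact support in $G^{p(x)}$ for each fixed $x$ (again by properness of $\theta$), so that the identity applies. The remaining verifications (that $(g,x)\mapsto(g^{-1},x)$ is a homeomorphism $G*_{r}X\to G*X$, using Definition~\ref{def:sfg}(ii)) are routine.
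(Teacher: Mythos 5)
Your argument is correct and follows essentially the same route as the paper's proof: reduce by orbit-invariance (left-invariance of $\{\la^{u}\}$) to a compact set $B$ with $X=GB$, then use properness of the action to confine the integration to a compact set of group elements and bound the result. Your use of the cut-off $\psi$ and the continuity of $\psi^{0}$ merely makes explicit the finiteness of the supremum over $B$, which the paper asserts directly from the compactness of the relevant set $A$.
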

\begin{proof}
Since $X$ is $G$-compact, there exists a compact subset $C$ of $X$ such that $X=GC$.  Let $Z=\text{ supp }f$.  Then $Z$ is compact.  By the properness of the action, the set $A$ of elements $g\in G$ such that 
$g^{-1}y\in Z$ for some $y\in C$ is compact.  So 
$\sup_{y\in C}\left|\int f(g^{-1}y)\,d\la^{p(y)}(g)\right|=M_{f} <\infty$.  Let $x\in X$.  Write $x=hy$ for some
$y\in C, h\in G$.  Then $\int f(g^{-1}x)\,d\la^{p(x)}(g)=\int f(g^{-1}y)\,d\la^{p(y)}(g)$, and the result follows.
\end{proof}

\begin{theorem}       \label{th:gbdd}
$D$ is bounded.
\end{theorem}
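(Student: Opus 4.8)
The plan is to write $D = \text{Av}(P)$ with $P = cD$ of compact support (Proposition~\ref{prop:pavf}), and then bound the norm of each operator $D^{u}$ on $L^{2}(X^{u},\mathcal{E}^{u})$ by exploiting the fact that $D^{u}$ is realized as an integral over $G^{u}$ of the translates $L_{g}P^{s(g)}L_{g^{-1}}$. First I would recall from Proposition~\ref{prop:pbdd} that $P$, being compactly supported, is bounded, so $N := \sup_{v\in G^{0}}\|(P^{v})'\| < \infty$; by the $G$-invariance of the Haar systems (the $L_{g}$ are unitary), $\|(L_{g}P^{s(g)}L_{g^{-1}})'\| = \|(P^{s(g)})'\| \leq N$ for every $g$. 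The issue is that $D^{u} = \int_{G^{u}} L_{g}P^{s(g)}L_{g^{-1}}\,d\la^{u}(g)$ is an integral of uniformly bounded operators over a noncompact set, so I cannot simply integrate operator norms — I need a genuine uniform bound that does not blow up with the ``size'' of $G^{u}$.

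The key point is that the integrand, though defined over all of $G^{u}$, is effectively supported on a $G$-compact-controlled set because $P$ has compact support and the action is proper. Concretely, for a fixed $\xi\in C_{c}^{\infty}(X^{u},\mathcal{E}^{u})$ with support in a compact set $S\subset X^{u}$, the operator $L_{g}P^{s(g)}L_{g^{-1}}$ contributes to $D^{u}\xi$ only when $g^{-1}(\text{supp }P\text{-related set}) $ meets $S$; I would use Lemma~\ref{lemma:gbdd} here with $f$ a suitable nonnegative function dominating the ``source footprint'' of the kernel of $P$, to get that the number of translates that overlap at any given point of $X^{u}$ is controlled by the finite constant $M_{f}$. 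More precisely, I would decompose $C' = \text{supp }P$ using a finite partition of unity $\{\phi_{i}\}$ subordinate to chart neighborhoods $U_{i}*U_{i}$ trivializing $E,F$ (as in the proof of Proposition~\ref{prop:pbdd}), reducing to the case where $\text{supp }P\subset U*U$ for a single chart $U$. Then the translated kernels $gK^{s(g)}(g^{-1}\cdot,g^{-1}\cdot)g^{-1}$ live in $gU*gU$, and by properness only finitely many of the sets $gU$ ($g\in G^{u}$) meet any given compact set, with a uniform bound coming from Lemma~\ref{lemma:gbdd}.

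To turn this overlap bound into a norm bound I would use a Cotlar–Stein / almost-orthogonality style estimate, or more simply the observation that $D^{u}$ can be written as a finite sum (locally) of operators each of the form $\int_{Z} \phi_{i}(g) L_{g}P^{s(g)}L_{g^{-1}}\,d\la^{u}(g)$ over a $G$-compact slice $Z$, exactly as in the proof of Proposition~\ref{prop:av}; there $\text{Av}(P)$ was shown to be locally a genuine $L^{0}_{\rho,\de}$ pseudodifferential family with symbol $a(x,y,\xi)=\sum_{i}\int \phi_{i}(g)b_{i}(g,x,y,\xi)\,d\la^{p(x)}(g)\in S^{0}_{\rho,\de}$. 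Since the compactness controlling that construction (the set $A$) is uniform over a neighborhood $T$ of each base point, the resulting symbols lie in a bounded subset of $S^{0}_{\rho,\de}(T;W\x W\x\R^{N})$, so Corollary~\ref{cor:Abdd} applies to give $\sup_{u\in T}\|(D^{u})'\| < \infty$; covering $G^{0}$ (or rather $X/G$, which is compact) by finitely many such $T$'s yields the global bound $\|D\| = \sup_{u}\|D^{u}\| < \infty$. The main obstacle is precisely making the ``uniformly over $u$'' part rigorous: one must check that the finite cover $Z_{g_{1}},\dots,Z_{g_{n}}$ and the partition $\{\phi_{i}\}$ from Proposition~\ref{prop:av} can be chosen to work uniformly as the base point ranges over a compact piece of $X/G$, so that the symbol family stays in a fixed bounded set — this is where $G$-compactness of $X$ is essential and where Lemma~\ref{lemma:gbdd} does the real work.
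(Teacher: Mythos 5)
You correctly isolate the difficulty---$D^{u}=\int_{G^{u}}L_{g}P^{s(g)}L_{g^{-1}}\,d\la^{u}(g)$ is an integral of uniformly bounded operators over a noncompact set---and your first suggestion, an almost-orthogonality estimate for the translates, is exactly the mechanism the paper uses, following Connes--Moscovici: one shows that $\lan F_{g},F_{h}\ran^{v}=0$ unless $g^{-1}h$ lies in a fixed compact set $C_{1}\subset G$ (by properness and compactness of $\text{supp}\,P$), proves the $L^{2}$-estimate $\int\norm{F_{g}}^{2}\,d\la^{v}(g)\leq M_{f^{2}}\norm{P}^{2}\norm{k^{v}}_{2}^{2}$ via Lemma~\ref{lemma:gbdd}, and then bounds $\norm{\int F_{g}\,d\la^{v}(g)}_{2}^{2}$ by $(\int\norm{F_{g}}^{2}\,d\la^{v}(g))^{1/2}\norm{[R_{\widehat{\phi}}(\psi)]^{v}}_{2}$ with $\psi(g)=\norm{F_{g}}$ and $R_{\widehat{\phi}}$ right convolution on the Hilbert module $E^{2}$, whose operator norm is finite. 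You gesture at this but do not carry it out; the branch you actually elaborate---the ``more simply'' route via Proposition~\ref{prop:av} and Corollary~\ref{cor:Abdd}---has a genuine gap.

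The gap is this: Corollary~\ref{cor:Abdd} applies only to families with \emph{compact} support in $T\x W\x W$, whereas $\text{supp}\,D$ is merely $G$-compact. For a fixed $u$, the operator $D^{u}$ acts on the possibly noncompact manifold $X^{u}$ and its kernel is not compactly supported; it is a locally finite sum of infinitely many overlapping local pieces. Covering $X/G$ (or $G^{0}$) by finitely many sets $T$ makes the local pieces uniformly bounded, but it does not reduce $\norm{D^{u}}$ to a finite maximum of local norms: an infinite sum of uniformly bounded operators with bounded overlap need not be bounded without an almost-orthogonality input, so the ``simpler'' route lands you back at exactly the problem you set out to avoid. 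You must therefore commit to the first branch and make the Cotlar--Stein-type estimate quantitative; this is what the paper's appeal to \cite[Lemma 1.5]{CoMos}, together with Lemma~\ref{lemma:gbdd} (which enters in the $L^{2}$-estimate for $\int\norm{F_{g}}^{2}\,d\la^{v}(g)$, not merely as an overlap count), accomplishes.
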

\begin{proof}
We adapt the argument of Connes and Moscovici (\cite[pp.296-297]{CoMos}) to show that $D$ is bounded.  To this end, write
$D=\text{Av}(P)$ where $P$ is as in Proposition~\ref{prop:pavf}.  Since $P$ has compact support, it is bounded (by Proposition~\ref{prop:pbdd}).   Let  
$k\in C_{c}^{\infty,0}(X,E)$.  Define $F\in 
C_{c}^{\infty,0}(r^{*}X,r^{*}F)$ by: $F(g,x)=L_{g}P^{s(g)}L_{g^{-1}}k(x)$. 

Let $v\in G^{0}$ and $g,h\in G^{v}$.  Let $f\in C_{c}^{\infty,0}(X)$ be such that $f=1$ on $p_{2}(C')$ where $C'$ is the support of the kernel $K$ of $P$.  So $Pf=P$, and 
using Lemma~\ref{lemma:gbdd} and the argument of \cite[p.297]{CoMos}, there exists 
$M=M_{f^{2}}$ such that 
\[  \int\norm{F_{g}}^{2}\,d\la^{v}(g)\leq M\norm{P}^{2} \norm{k^{v}}_{2}^{2}.   \]

Now suppose that 
$\lan F_{g},F_{h}\ran^{v}=\int \lan F_{g}(x),F_{h}(x)\ran\,d\mu^{v}(x)$ is non-zero.  
The expression for the kernel of $L_{g}P^{s(g)}L_{g^{-1}}$ shows that for some $x$, both 
$g^{-1}x, h^{-1}x\in p_{1}(C')$. 
The properness of the $G$-action gives that there exists a compact subset $C_{1}$ of $G$ such that $\lan F_{g}, F_{h}\ran=0$ whenever $g^{-1}h\notin C_{1}$.  Let $\phi\in C_{c}(G)$ be such that $\phi=1$ on $C_{1}$.  The argument of \cite[Lemma 1.5]{CoMos} then gives,
with $\psi(g)=\norm{F_{g}}$, that
\begin{gather*}
\norm{D^{v}k^{v}}_{2}^{2}=\norm{\text{Av}(P)^{v}k^{v}}_{2}^{2}=
\norm{\int F_{g}\,d\la^{v}(g)}_{2}^{2}\\
\leq
(\int\norm{F_{g}}^{2}\,d\la^{v}(g))^{1/2}\norm{[R_{\widehat{\phi}}(\psi)]^{v}}_{2}
\leq M\norm{P}^{2}\norm{R_{\widehat{\phi}}}\norm{k^{v}}_{2}^{2}.   
\end{gather*}
(Here, $R_{\phi}F=F*\phi$ as in \S 3, and $\widehat{\phi}(x)=\phi(x^{-1})$.)
 So $D$ is bounded.
\end{proof}
\begin{corollary}   \label{cor:PD}
Let $U$ be a relatively compact, open subset of $X$.  Let $\mathcal{P}_{U}$ be the set of $P$'s 
with compact support contained in $U*U$.  Then the map $P\to \text{Av}(P)$ is norm 
continuous on $\mathcal{P}_{U}$.
\end{corollary}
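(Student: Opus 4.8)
The plan is to reduce the statement to a single uniform operator-norm estimate and then re-use the computation already carried out for Theorem~\ref{th:gbdd}. Since $P\mapsto\text{Av}(P)$ is linear and $\mathcal{P}_{U}$ is a linear subspace of $L^{0}_{\rho,\de}(X;E,F)$ (the kernel of $P-P'$ is again compactly supported in $U*U$, and $\text{Av}(P-P')=\text{Av}(P)-\text{Av}(P')$), it is enough to produce a constant $C_{U}$ depending only on $U$ with $\norm{\text{Av}(P)}\leq C_{U}\norm{P}$ for all $P\in\mathcal{P}_{U}$; here $\norm{P}$ is the operator norm of the bounded family $P$ (Proposition~\ref{prop:pbdd}) and $\norm{\text{Av}(P)}$ that of $\text{Av}(P)\in\Psi^{0}_{\rho,\de}(X;E,F)$ (Proposition~\ref{prop:av}, Theorem~\ref{th:gbdd}). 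Applying such an estimate to $P-P'\in\mathcal{P}_{U}$ then shows $\text{Av}$ is Lipschitz, hence norm continuous, on $\mathcal{P}_{U}$.

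To obtain the uniform constant I would revisit the proof of Theorem~\ref{th:gbdd} and observe that its two ingredients can be fixed once and for all on $\mathcal{P}_{U}$. First, choose $f\in C_{c}^{\infty,0}(X)$ with $0\leq f\leq 1$ and $f=1$ on the compact set $\ov{U}$; for every $P\in\mathcal{P}_{U}$ the support $C'$ of its kernel lies in $U*U$, so $f=1$ on $p_{2}(C')$, whence $Pf=P$, while $M:=M_{f^{2}}<\infty$ by Lemma~\ref{lemma:gbdd} and depends only on $f$, i.e.\ only on $U$. Second, let $\Theta:G*X\to X\x X$ be the proper map $\Theta(g,x)=(gx,x)$; then $\Theta^{-1}(\ov{U}\x\ov{U})$ is compact, and I take $C_{1}$ to be its (compact) image in $G$ and fix $\phi\in C_{c}(G)$ with $0\leq\phi\leq 1$ and $\phi=1$ on $C_{1}$. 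For $P\in\mathcal{P}_{U}$ and $k\in C_{c}^{\infty,0}(X,E)$, with $F(g,x)=L_{g}P^{s(g)}L_{g^{-1}}k(x)$ as in that proof, the kernel formula (\ref{eq:kgdist}) shows that $\lan F_{g},F_{h}\ran^{v}\neq 0$ (for $g,h\in G^{v}$) forces some $x$ with $g^{-1}x,h^{-1}x\in p_{1}(C')\subset\ov{U}$; putting $a=h^{-1}x$ gives $(g^{-1}h,a)\in\Theta^{-1}(\ov{U}\x\ov{U})$, hence $g^{-1}h\in C_{1}$ and $\phi(g^{-1}h)=1$. So the overlap estimate of \cite[Lemma 1.5]{CoMos}, together with $\int\norm{F_{g}}^{2}\,d\la^{v}(g)\leq M\norm{P}^{2}\norm{k^{v}}_{2}^{2}$ (obtained from $Pf=P$, the $G$-isometry of the metric, the $G$-invariance of $\{\mu^{u}\}$, and Lemma~\ref{lemma:gbdd}, exactly as in Theorem~\ref{th:gbdd}), runs with this single $\phi$ and gives $\norm{\text{Av}(P)^{v}k^{v}}_{2}^{2}\leq M\norm{P}^{2}\norm{R_{\widehat{\phi}}}\norm{k^{v}}_{2}^{2}$ for all $v\in G^{0}$. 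Taking suprema over $v$ and over unit vectors $k^{v}$ yields $\norm{\text{Av}(P)}\leq C_{U}\norm{P}$ with $C_{U}=(M\norm{R_{\widehat{\phi}}})^{1/2}$, depending only on $U$.

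I do not expect a genuine obstacle; the real content is the bookkeeping in the second paragraph, namely checking that $M=M_{f^{2}}$ and the cut-off $\phi$ (hence $\norm{R_{\widehat{\phi}}}$) depend only on the common constraint $\text{supp }K\subset U*U$ and not on the individual $P$. The one place this could break is the claim that the relevant set $C_{1}$ of group elements is compact, and that is precisely where compactness of $\ov{U}$ together with properness of the $G$-action is used (so that $\Theta^{-1}(\ov{U}\x\ov{U})$, and hence $C_{1}$, is compact). With that secured, the first paragraph completes the proof.
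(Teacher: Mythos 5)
Your proof is correct and follows essentially the same route as the paper, whose proof simply observes that the functions $f$ and $\phi$ from the proof of Theorem~\ref{th:gbdd} can be chosen to depend only on $U$, yielding $\norm{\text{Av}(P)}\leq M'\norm{P}$ on $\mathcal{P}_{U}$. Your write-up supplies the bookkeeping the paper leaves implicit (the linearity reduction and the verification via properness that $C_{1}$ depends only on $\ov{U}$), but the underlying argument is identical.
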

\begin{proof}
The functions $f, \phi$ of the preceding proof can be taken to depend only on $U$, so that there 
exists a constant $M'$ such that $\norm{\text{Av}(P)}\leq M'\norm{P}$.
\end{proof}


We now discuss the Kasparov modules that will give us the analytic index of $D$.  The approach is similar to that of \cite{jsrc} except that the groupoid version of the equivariant K-theory of N. C. Phillips is not used.

\begin{proposition}       \label{prop:preH}
The space $C_{c}(X,E)$ is a pre-Hilbert module over
the pre-$C^{*}$-algebra $C_{c}(G)\subset C_{red}^{*}(G)$ with $C_{c}(G)$-inner product and module
action given by:
\beqn
\lan k_{1},k_{2}\ran(g)&=&\int_{X^{r(g)}}\ov{\lan k_{1}(x),L_{g}k_{2}(x)}\ran\,
d\mu^{r(g)}(x)    \label{eq:lanran2}      \\
kf(x)&=&\int_{G^{p(x)}}L_{g}k(x)f(g^{-1})\,d\la^{p(x)}(g).   \label{eq:efx4}
\eeqn
\end{proposition}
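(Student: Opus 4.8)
The plan is to verify the Hilbert-module axioms in turn; apart from the positivity of $\lan k,k\ran$ in $C_{red}^{*}(G)$, each is a formal consequence of the defining formulas together with the invariance of $\{\la^{u}\}$ and $\{\mu^{u}\}$ and the $G$-isometry of the metric.

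First I would check that the formulas land where they should. Since $L_{g}$ carries $E$-sections over $X^{s(g)}$ to $E$-sections over $X^{r(g)}$, the integrand of (\ref{eq:efx4}) is a section of $E$ and that of (\ref{eq:lanran2}) is scalar-valued; continuity in $x$, resp.\ $g$, follows by dominated convergence once one notes — using properness of the $G$-action — that over a compact set of $x$'s, resp.\ $g$'s, the $g$-support, resp.\ $x$-support, of the integrand stays in a fixed compact subset of $G$, resp.\ $X$. The same properness observation applied to $\text{supp}\,f$ and to $\text{supp}\,k_{i}$ gives $kf\in C_{c}(X,E)$ and $\lan k_{1},k_{2}\ran\in C_{c}(G)$. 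The module identity $k(f_{1}*f_{2})=(kf_{1})f_{2}$ is obtained by pushing $L_{g}$ through the integral defining $kf_{1}$, substituting $t=gh$, and using left-invariance of $\{\la^{u}\}$ — the same kind of manipulation that yields associativity of the convolution product on $C_{c}(G)$. Sesquilinearity of $\lan\cdot,\cdot\ran$ is read off the formula. For $\lan k_{1},k_{2}\ran^{*}=\lan k_{2},k_{1}\ran$ one starts from $\lan k_{1},k_{2}\ran^{*}(g)=\ov{\lan k_{1},k_{2}\ran(g^{-1})}$, changes variables $x\mapsto g^{-1}x$ using the $G$-invariance of $\{\mu^{u}\}$, moves the $g$'s out by the $G$-isometry of the metric, and finishes with the hermitian symmetry of the metric. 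Likewise $\lan k_{1},k_{2}f\ran=\lan k_{1},k_{2}\ran*f$ follows by pushing $L_{g}$ through the integral defining $k_{2}f$, substituting $t=gh$, applying left-invariance of $\{\la^{u}\}$, and using Fubini to recognise $(\lan k_{1},k_{2}\ran*f)(g)$. In particular (taking $k_{1}=k_{2}$) $\lan k,k\ran$ is selfadjoint.

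The crux is that $\lan k,k\ran\geq 0$ in $C_{red}^{*}(G)$. Using the first description of $C_{red}^{*}(G)$ in \S 3 it suffices to show $\pi_{u}(\lan k,k\ran)\geq 0$ for every $u\in G^{0}$. Identify $L^{2}(G,\la_{u})$ with $L^{2}(G^{u},\la^{u})$ via $\xi\mapsto\xi\circ i$; a short computation shows that under this identification $\pi_{u}(f)$ becomes $\zeta\mapsto\bigl(g\mapsto\int_{G^{u}}\zeta(h)f(g^{-1}h)\,d\la^{u}(h)\bigr)$. Now for $k\in C_{c}(X,E)$ define
\[
\Lambda_{k}:C_{c}(G^{u})\to C_{c}(X^{u},\mathcal{E}^{u})\subset L^{2}(X^{u},\mathcal{E}^{u}),\qquad
(\Lambda_{k}\zeta)(x)=\int_{G^{u}}\zeta(g)\,g[k(g^{-1}x)]\,d\la^{u}(g),
\]
which, by properness, is well defined with values in $C_{c}(X^{u},\mathcal{E}^{u})$. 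Expanding $\lan\Lambda_{k}\zeta,\Lambda_{k}\zeta'\ran^{u}$, using the $G$-isometry of the metric to rewrite $\lan g[k(g^{-1}x)],h[k(h^{-1}x)]\ran^{x}$ as $\lan k(g^{-1}x),g^{-1}h[k(h^{-1}x)]\ran^{g^{-1}x}$, then the $G$-invariance of $\{\mu^{u}\}$ to replace $\int_{X^{u}}(\,\cdot\,)(g^{-1}x)\,d\mu^{u}(x)$ by $\int_{X^{s(g)}}(\,\cdot\,)\,d\mu^{s(g)}$, and recognising the resulting inner integral as $\ov{\lan k,k\ran(g^{-1}h)}$ together with $\lan k,k\ran^{*}=\lan k,k\ran$, one arrives at $\lan\Lambda_{k}\zeta,\Lambda_{k}\zeta'\ran^{u}=\lan\pi_{u}(\lan k,k\ran)\zeta,\zeta'\ran$. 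Hence the bounded selfadjoint operator $\pi_{u}(\lan k,k\ran)$ satisfies $\lan\pi_{u}(\lan k,k\ran)\zeta,\zeta\ran=\norm{\Lambda_{k}\zeta}^{2}\geq 0$ on the dense subspace $C_{c}(G^{u})$, so $\pi_{u}(\lan k,k\ran)\geq 0$; since $\norm{\cdot}_{red}=\sup_{u}\norm{\pi_{u}(\cdot)}$, this gives $\lan k,k\ran\geq 0$ in $C_{red}^{*}(G)$.

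Finally, if $\lan k,k\ran=0$ in $C_{red}^{*}(G)$ then $\lan k,k\ran$ is the zero function (the map $C_{c}(G)\to C_{red}^{*}(G)$ being injective); evaluating at a unit $u\in G^{0}$, where $L_{u}$ is the identity, gives $0=\lan k,k\ran(u)=\int_{X^{u}}\norm{k(x)}^{2}\,d\mu^{u}(x)$ for every $u$, whence $k=0$ since each $\mu^{u}$ has support $X^{u}$ and $k$ is continuous. The one real obstacle is the positivity step: the bookkeeping identifying $\Lambda_{k}^{*}\Lambda_{k}$ with $\pi_{u}(\lan k,k\ran)$ — simultaneously tracking the three invariance properties and the transported form of the regular representation — must be organised carefully, though each ingredient is elementary. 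The remaining axioms are purely formal.
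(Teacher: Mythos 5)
Your proposal is correct and follows essentially the route the paper intends: the paper's own proof merely defers to \cite[Proposition 5.6]{jsrc} while indicating that positivity is checked via ``the first approach to $C_{red}^{*}(G)$'', i.e.\ fibrewise through the representations $\pi_{u}$, which is exactly your $\Lambda_{k}^{*}\Lambda_{k}=\pi_{u}(\lan k,k\ran)$ argument combined with the invariance of $\{\la^{u}\}$, $\{\mu^{u}\}$ and the $G$-isometric metric. Your write-up supplies the details the paper leaves to the reference, and the computations (properness for compact supports, the change of variables $y=g^{-1}x$, and the identification of the transported regular representation) all check out.
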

\begin{proof}
It is easy to prove that $\lan k_{1},k_{2}\ran\in C_{c}(G)$.  The rest of the proof follows that of \cite[Proposition 5.6]{jsrc}, using the first approach to $C_{red}^{*}(G)$ in \S 3.
\end{proof}

It follows that the completion $\Ga(E)$ of $C_{c}(X,E)$ under the norm 
$k\to \norm{\lan k,k\ran}^{1/2}$ is a Hilbert $C_{red}^{*}(G)$-module.

\begin{proposition}   \label{prop:dinl}
The pdo family $D$ extends by continuity to an element (also denoted $D$) of $\mfL(\Ga(E),\Ga(F))$.  Further, if 
$R\in \Psi^{m}_{\rho,\de}(X;E,F)$ where $m<0$, then $R$ extends to an element of  $\mfK(\Ga(E),\Ga(F))$.
\end{proposition}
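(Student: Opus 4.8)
The plan is to reduce both assertions to the local, compactly supported estimates already established (Corollary~\ref{cor:Abdd}, Proposition~\ref{prop:pbdd}, Proposition~\ref{prop:cont1}, Theorem~\ref{th:gbdd}), together with the averaging machinery of \S 5. For the first statement, write $D=\text{Av}(P)$ with $P=cD$ compactly supported (Proposition~\ref{prop:pavf}). To see that $D$ defines an operator on the Hilbert module, I would check directly on $k\in C_c(X,E)$ that $Dk\in C_c(X,F)$ (which follows since $D$ is proper and $C^{\infty,0}$) and that $D$ is adjointable: the formal adjoint is $D^{*}\in\Psi^{0}_{\rho,\de}(X;F,E)$, which is again invariant, and one verifies $\lan Dk_{1},k_{2}\ran=\lan k_{1},D^{*}k_{2}\ran$ by unwinding definitions \eqref{eq:lanran2}, using the invariance $L_{g}D^{s(g)}L_{g^{-1}}=D^{r(g)}$ and the $G$-invariance of $\{\mu^{u}\}$ (Fubini plus a change of variable, exactly as in the adjoint computation displayed just before Proposition~\ref{prop:av}). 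Boundedness of $D$ as a module map then follows from the fact that $\norm{D}_{\mfL(\Ga(E),\Ga(F))}$ is controlled by $\sup_{u}\norm{D^{u}}_{L^{2}(X^{u})}$, which is finite by Theorem~\ref{th:gbdd}; indeed for the reduced $C^{*}$-norm one has $\norm{\lan Dk,Dk\ran}_{red}=\sup_{u}\norm{\pi_{u}(\lan Dk,Dk\ran)}$, and each $\pi_{u}$ is implemented on $L^{2}(G,\la_{u})$ in a way that factors through the fiberwise operators $D^{w}$, $w\in G^{u}$, so the bound $\sup_{w}\norm{D^{w}}^{2}\norm{\lan k,k\ran}$ drops out.

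For the second assertion, let $R\in\Psi^{m}_{\rho,\de}(X;E,F)$ with $m<0$. Again write $R=\text{Av}(P_{0})$ with $P_{0}=cR$ compactly supported; note $P_{0}\in L^{m}_{\rho,\de}(X;E,F)$ still has order $m<0$. The strategy is to show $R$ is a norm limit of ``finite-rank-type'' module operators, i.e. of elements of the linear span of the rank-one operators $\theta_{k_{1},k_{2}}:k\mapsto k_{1}\lan k_{2},k\ran$. First cut the compact support of the kernel of $P_{0}$ into finitely many pieces supported in chart neighborhoods $U_{i}*U_{i}$ (as in the proof of Proposition~\ref{prop:pbdd}), reducing to a single chart $U\thicksim p(U)\x W$ with $E,F$ trivial there. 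In local coordinates $(P_{0})^{u}$ is a compactly supported $L^{m}_{\rho,\de}(W)$-operator with $m<0$, hence fiberwise compact on $L^{2}(W,\C^{p})$ with norms uniform in $u$ (Proposition~\ref{prop:cont1}, Corollary~\ref{cor:Abdd}). The key point is to approximate $(P_{0})^{u}$, uniformly in $u$, by operators with smooth compactly supported kernel: by the symbol-truncation argument (\cite[Prop.~3.5, Cor.~6.1]{Shubin}), applied in the continuous-family form already invoked in \S 4, one can write $P_{0}=P_{0}'+P_{0}''$ with $\norm{P_{0}''}<\eps$ (uniformly in $u$) and $P_{0}'$ having kernel in $C^{\infty,0}_{c}(X*X,\text{Hom}(E,F))$. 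An operator with such a smooth compactly supported kernel, once averaged, lies in $\mfK(\Ga(E),\Ga(F))$: this is the content of \cite[Proposition 5.6 ff.]{jsrc} and is seen by approximating the kernel uniformly by finite sums $\sum_{j}a_{j}(x)\ov{b_{j}(y)}$ with $a_{j},b_{j}\in C_{c}(X,\cdot)$, so that the corresponding non-averaged operator is a finite sum of rank-one module operators $\theta_{a_{j},b_{j}}$, and averaging preserves membership in $\mfK$ since $\text{Av}$ is norm-continuous on $\mathcal{P}_{U}$ (Corollary~\ref{cor:PD}) and $\mfK$ is closed. Passing $\eps\to 0$ and using Corollary~\ref{cor:PD} once more to bound $\norm{\text{Av}(P_{0}'')}\leq M'\eps$ gives $R=\text{Av}(P_{0})\in\mfK(\Ga(E),\Ga(F))$.

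The main obstacle I anticipate is the bookkeeping in the compactness argument: one must ensure that the decomposition $P_{0}=P_{0}'+P_{0}''$, the chart-by-chart kernel approximations, and the conversion from fiberwise Hilbert-space compactness to Hilbert-module compactness are all carried out with estimates that are \emph{uniform in $u\in G^{0}$} and compatible with the cut-off/averaging operation. The uniformity is what makes the fiberwise compact operators assemble into an element of $\mfK(\Ga(E),\Ga(F))$ rather than merely a fiberwise-compact field; the tools for this uniformity are exactly Corollary~\ref{cor:Abdd} (uniform operator bounds over a chart), the $G$-compactness of $X$ (so finitely many charts suffice after using the cut-off $c$), and Corollary~\ref{cor:PD} (norm-continuity of $\text{Av}$). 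Everything else—adjointability, the reduction $D=\text{Av}(cD)$, the identification of $D^{*}$—is routine given the results already proved.
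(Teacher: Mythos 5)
Your first paragraph follows the paper's own route: uniform boundedness of the fibers $D^{u}$ (Theorem~\ref{th:gbdd}) together with adjointability via $D^{*}\in\Psi^{0}_{\rho,\de}(X;F,E)$ is exactly what the paper invokes (through \cite[Theorem 5.7]{jsrc}) to conclude $D\in\mfL(\Ga(E),\Ga(F))$ with $\norm{D}\leq\sup_{u}\norm{D^{u}}$. The architecture of your second paragraph --- reduce to a single relatively compact chart $U$, use $m<0$ to get fiberwise compactness continuously in $u$ (Proposition~\ref{prop:cont1}), approximate uniformly by finite sums of elementary kernels, and push the approximation through $\text{Av}$ via Corollary~\ref{cor:PD} --- also matches the paper.

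There is, however, a genuine gap at the crux of the compactness argument. You assert that the non-averaged operator with kernel $\sum_{j}a_{j}(x)\ov{b_{j}(y)}$ ``is a finite sum of rank-one module operators $\theta_{a_{j},b_{j}}$'' and then conclude because ``averaging preserves membership in $\mfK$''. Neither half of this is correct as stated. The operator $f\mapsto\sum_{j}a_{j}\,\ov{\lan b_{j},f\ran'}$, built from the fiberwise $L^{2}$-inner product, is the family $\sum_{j}a_{j}\otimes b_{j}$ of the paper's proof; it is not $G$-invariant, hence is not a map of the Hilbert module $\Ga(E)$ at all, so ``membership in $\mfK$'' before averaging is meaningless and cannot be ``preserved''. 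The module rank-one operator $\theta_{a_{j},b_{j}}:f\mapsto a_{j}\lan b_{j},f\ran$ involves the $C_{red}^{*}(G)$-valued inner product (\ref{eq:lanran2}) and the module action (\ref{eq:efx4}), both of which integrate over $G$; it is a different object. The step your argument actually needs --- and which the paper supplies by an explicit computation using the $G$-invariance of $\{\mu^{u}\}$ and Fubini --- is the identity
\[
\text{Av}(\xi\otimes\eta)=\theta_{\xi,\eta},
\]
i.e.\ that averaging the fiberwise rank-one operator over $G$ produces exactly the Hilbert-module rank-one operator. Once this is established, Corollary~\ref{cor:PD} converts the uniform approximation $\norm{P-\sum_{i}\xi_{i}\otimes\eta_{i}}<\eps$ into $\norm{R-\sum_{i}\theta_{\xi_{i},\eta_{i}}}<M'\eps$, and the conclusion follows since $\mfK$ is norm closed. (Your intermediate reduction to a smooth compactly supported kernel is harmless but unnecessary: since $u\mapsto(P_{U})_{1}^{u}$ is continuous into the compact operators, the density of finite-rank operators already yields the required uniform approximation by $\sum_{i}\xi_{i}\otimes\eta_{i}$ directly, via a partition of unity.)
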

\begin{proof}
Let $M=\sup_{u}\norm{D^{u}}$.  By Theorem~\ref{th:gbdd}, $M<\infty$.  The argument of 
\cite[Theorem 5.7]{jsrc}, which uses the fact that  
$D^{*}\in \Psi^{0}_{\rho,\de}(X;F,E)$, then gives that $D\in \mfL(\Ga(E),\Ga(F))$ and $\norm{D}\leq M$. 
 
Now let $R\in \Psi^{m}_{\rho,\de}(X;E,F)$ where $m<0$.  Then let $R=\text{Av}(P)$ where 
$P\in L^{m}_{\rho,\de}(X;E,F)$ with compactly supported kernel $K$.  As in the proof of 
Proposition~\ref{prop:pbdd}, we can suppose that the support $K$ of $P$
is contained in $U*U$, where 
$U\thicksim p(U)\x W$ is a relatively compact chart of $X$
that trivializes $E$ and $F$.  By Proposition~\ref{prop:cont1}, the map 
$u\to (D_{U})_{1}^{u}$ 
((\ref{eq:du1})) is continuous into $K(L^{2}(W,\C^{p}), L^{2}(W,\C^{q}))$. For 
$\xi\in C_{c}^{\infty,0}(U,F_{U}), \eta\in C_{c}^{\infty,0}(U,E_{U})$ let 
$\xi\otimes\eta: C_{c}^{\infty,0}(X,E)\to C_{c}^{\infty,0}(X,F)$ be given by:
\[   \xi\otimes \eta(f)=\xi\ov{\lan\eta,f\ran'}.  \]
(Here, $\lan\eta,f\ran'$ is the usual inner product on $L^{2}(E_{U})$ - we use the $'$ here to distinguish this inner product from that of (\ref{eq:lanran2}).)
It is easily checked that $\xi\otimes \eta\in 
L_{\rho,\de}^{-\infty}(X;E,F)$ and is compactly supported.  Let $\eps>0$.
Using a partition of unity argument and the density of the finite rank operators in $K(L^{2}(W,\C^{p}), L^{2}(W,\C^{q}))$, there exist $\xi_{i}\in C_{c}^{\infty,0}(U,F_{U}), \eta_{i}\in C_{c}^{\infty,0}(U,E_{U})$
such that $\norm{P-S}<\eps$ where $S=\sum_{i=1}^{n}\xi_{i}\otimes \eta_{i}$.  By Corollary~\ref{cor:PD}, there exists
$M'>0$ dependent only on $U$ such that $\norm{R-\text{Av}(S)}<M'\eps$.  Now 
we claim that
$\text{Av}(\xi\otimes \eta)=\theta_{\xi,\eta}$ so that $R\in \mfK(\Ga(E),\Ga(F))$.

To prove this equality (cf. \cite{jsrc}):
\begin{align*}
\text{Av}(\xi\otimes \eta)f(x) &= 
\int L_{g}(\xi\otimes\eta)L_{g^{-1}}f(x)\,d\la^{p(x)}(g)\\
&= \int L_{g}[\xi\ov{\lan\eta,L_{g^{-1}}f\ran'}](x)\,d\la^{p(x)}(g)\\
&= \int g[\xi\ov{\lan\eta,L_{g^{-1}}f\ran'}](g^{-1}x)\,d\la^{p(x)}(g)\\
&= \int g\xi(g^{-1}x)\ov{\lan\eta,L_{g^{-1}}f\ran'}(s(g))\,d\la^{p(x)}(g)\\
&= \int L_{g}\xi(x)\,d\la^{p(x)}(g)
\int\ov{\lan\eta(y),L_{g^{-1}}f(y)\ran}\,d\mu^{s(g)}(y)\\
&= \int L_{g}\xi(x)\lan\eta,f\ran(g^{-1})\,d\la^{p(x)}(g)\\
&= (\xi\lan\eta,f\ran)(x)\\
&= \theta_{\xi,\eta}f(x).
\end{align*}
\end{proof}

We now specify how $D$ determines the analytic index $ind_{a}(D)\in K_{0}(C_{red}^{*}(G))$.  To this end, let $Q$ be a parametrix of $D$ as in Theorem~\ref{th:para} and let $\Ga$ be the (ungraded)
Hilbert module $\Ga(E)\oplus \Ga(F)$ over $C_{red}^{*}(G)$.  Let $T\in \mfL(\Ga)$ be defined by:
\[
T=\normalsize
\begin{pmatrix}
0 & Q \\
D & 0
\end{pmatrix}
\large
\]
By Theorem~\ref{th:para} and Proposition~\ref{prop:dinl}, $T\in \mfL(\Ga)$ and
$(T^{2}-I)\in \mfK(\Ga)$.  Let $c\in \mfL(\Ga)$ be such that the image of $c$ in  
$\mfL(\Ga)/\mfK(\Ga)$ is the unitary part of the image of $T$.  Then we
get a Kasparov $(\C,C_{red}^{*}(G))$-bimodule
\[ (\Ga\oplus \Ga,
\normalsize
\begin{pmatrix}
0 & c^{*} \\
c & 0
\end{pmatrix}
\large
)
\]
which gives an element of $KK^{0}(\C, C_{red}^{*}(G))=
K_{0}(C_{red}^{*}(G))$.  This element is the analytic index $\text{ind}_{a}(D)$ of $D$.

\begin{proposition}       \label{prop:indsymb}
The analytic index of $D$ depends only on the symbol class $\si(D)$.
\end{proposition}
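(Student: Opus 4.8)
The plan is to show that if $D_0, D_1 \in \Psi^0_{\rho,\de}(X;E,F)$ are elliptic with the same principal symbol class $\si(D_0) = \si(D_1)$, then $\text{ind}_a(D_0) = \text{ind}_a(D_1)$ in $K_0(C^*_{red}(G))$. The key observation is that $\si(D_0) = \si(D_1)$ means $D_0 - D_1$ has principal symbol zero, i.e. locally $\si_{D_0} - \si_{D_1} \in S^{-2(\rho - 1/2)}_{\rho,\de}$, so by the identification of orders, $R := D_0 - D_1 \in \Psi^{m}_{\rho,\de}(X;E,F)$ for some $m < 0$ (here $m = -2(\rho-1/2) = 1 - 2\rho < 0$ by (\ref{eq:rhode})). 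By Proposition~\ref{prop:dinl}, $R$ extends to an element of $\mfK(\Ga(E),\Ga(F))$.

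First I would set up the homotopy. Let $Q_0, Q_1$ be parametrices for $D_0, D_1$ as in Theorem~\ref{th:para}. Form $D_t = D_0 + t R = (1-t)D_0 + tD_1$ for $t \in [0,1]$; each $D_t$ is again an elliptic element of $\Psi^0_{\rho,\de}(X;E,F)$ with the same symbol class (since $tR$ is of negative order), and by Theorem~\ref{th:para} has a parametrix $Q_t$. As in the construction of $\text{ind}_a(D)$, I would build the operator
\[
T_t = \begin{pmatrix} 0 & Q_t \\ D_t & 0 \end{pmatrix} \in \mfL(\Ga(E)\oplus\Ga(F))
\]
with $T_t^2 - I \in \mfK$. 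The point is that $t \mapsto D_t$ is norm-continuous into $\mfL(\Ga(E),\Ga(F))$ (indeed affine), and one needs $t \mapsto Q_t$ and $t \mapsto T_t^2 - I$ to be norm-continuous as well, so that $(\Ga\oplus\Ga, \left(\begin{smallmatrix} 0 & c_t^* \\ c_t & 0\end{smallmatrix}\right))$ is an operator homotopy of Kasparov $(\C, C^*_{red}(G))$-bimodules; homotopy invariance of $KK$ then gives $\text{ind}_a(D_0) = \text{ind}_a(D_1)$.

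The main obstacle is producing a norm-continuous family of parametrices $Q_t$. The cleanest route avoids this altogether: since $R = D_1 - D_0 \in \mfK(\Ga(E),\Ga(F))$, a parametrix $Q_0$ for $D_0$ is automatically a parametrix for $D_1$ in the $KK$-sense — that is, $Q_0 D_1 - I = Q_0 D_0 - I + Q_0 R \in \mfK(\Ga(E))$ and similarly $D_1 Q_0 - I \in \mfK(\Ga(F))$, using that $Q_0 \in \mfL$ and $R \in \mfK$. Hence with a single fixed $Q = Q_0$ we may form
\[
T_t = \begin{pmatrix} 0 & Q \\ D_t & 0 \end{pmatrix},
\]
and now $T_t^2 - I \in \mfK(\Ga)$ for every $t$ with $t \mapsto T_t$ manifestly norm-continuous (affine in $t$). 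Taking $c_t$ to be the unitary part of the image of $T_t$ in the Calkin-type quotient $\mfL(\Ga)/\mfK(\Ga)$ (which depends continuously on $t$ since the image of $T_t$ does and the relevant spectral subset stays bounded away from $0$, as the image of $T_t^2$ is invertible), one obtains an operator homotopy of Kasparov modules connecting the module defining $\text{ind}_a(D_0)$ to the one defining $\text{ind}_a(D_1)$.

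Finally I would record that the construction of $\text{ind}_a(D)$ is itself independent of the choice of parametrix $Q$ (two parametrices differ by a negative-order, hence compact, term, giving an operator homotopy by the same argument), and independent of the choice of unitary lift $c$ (two lifts differ by something in $\mfK$, again an operator homotopy). Combining these observations, $\text{ind}_a(D)$ depends only on $[D]$ modulo $\Psi^{m}_{\rho,\de}$ with $m<0$, which is precisely the data of $\si(D) \in S^{m,i}_{\rho,\de}/S^{m-2(\rho-1/2),i}_{\rho,\de}$ by Theorem~\ref{th:si}. The one routine point to check is that passing from a general elliptic $D \in \Psi^m_{\rho,\de}$ to the order-zero operator $L$ (as in the reduction preceding Proposition~\ref{prop:cont1}) respects symbol classes, so that the statement makes sense for all $m$; this is immediate from the construction of $L$ via $[ad]$ and the fact that $L$ is determined up to a negative-order term.
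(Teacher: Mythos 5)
Your proposal is correct and rests on exactly the same mechanism as the paper's proof: equality of symbol classes forces $D_{0}-D_{1}\in \Psi^{-2(\rho-1/2)}_{\rho,\de}$, hence (by Proposition~\ref{prop:dinl}) a compact perturbation in $\mfK(\Ga(E),\Ga(F))$, so the associated Kasparov modules are equivalent. The paper phrases this as a direct compact perturbation of $T$ (using that the two parametrices also differ by a compact), while you make the operator homotopy explicit with a single fixed parametrix; this is only a cosmetic difference.
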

\begin{proof}
Let $D'$ be elliptic and such that $\si(D')=\si(D)$.  By definition of the principal symbol, 
$D-D'\in \Psi^{-2(\rho -1/2)}_{\rho,\de}$, and by Proposition~\ref{prop:dinl}, belongs to
$\mfK(\Ga(E),\Ga(F))$.  Then $Q,Q'$ have the same principal symbol class, and so $Q-Q'\in \mfK(\Ga(F),\Ga(E))$ and
$T'$ is equivalent to $T$.  So $D, D'$ have the same analytic index.
\end{proof} 


\section{Comments and examples}

\noindent
{\bf (1) The case where $X=G$.}\\

This is the case that has been most studied in the literature.  Generally, $G$ is assumed to be a Lie groupoid (sometimes with corners) and the pdo's classical.   We note first that the action of $G$ on $X=G$ (left multiplication) is proper so that the theory developed here applies to that situation.  In the case where $X=G$, the invariance of the symbol gives that we need only consider it on the Lie algebroid $A(G)=\cup_{u\in G^{0}}T_{u}G\subset TG$ of $G$.  In the work of Nistor, Weinstein and Xu (\cite{NWX}) (see also the paper \cite{Mont} of Monthubert), the symbol calculus is developed in the context of the Poisson algebra of functions on the dual of $A(G)$. In the general case of the present paper, there is no Lie algebroid available.  So we have to work with $TX$ rather than $A(G)$, and a different approach is needed.   

In \cite{NWX}, a number of examples are given illustrating the theory.  In particular, by taking $G=M\x M$, they show how the classical (non-equivariant) Atiyah-Singer index theorem fits in with the theory.  In the Atiyah-Singer (and the present) context, the theorem is, of course, obtained by taking $G$ to be the trivial one-element group acting on $M$ (so that $M$ is regarded as a family over a one point space.)  They further consider the case of the holonomy groupoid associated with the foliation determined by a locally free action of a Lie group on a manifold $M$.  (An isotropy condition on the action is also required.) The case of a general $C^{\infty,0}$-foliation will be discussed below.  They also consider the {\em adiabatic groupoid} associated with a Lie groupoid, an example of which is the tangent groupoid as defined by Connes (in which case, the Lie groupoid is $M\x M$).  This construction remains valid (\cite{LMN00}) if the Lie groupoid is replaced by a continuous family groupoid.  

Pseudodifferential analysis on continuous family groupoids is studied by Lauter, Monthubert and Nisor in \cite{LMN00}.  As described above, the Lie algebroid plays a fundamental role.  As far as the class of pdo families are concerned, the classical pdo's of \cite{LMN00} are, of course, in the $L^{m}_{1,0}$ class of the present paper, but the pdo's in the latter class are more general - the asymptotic expansions need not be in terms of homogeneous symbols.  (As H\"{o}rmander points out (\cite[p.65]{Hor}), there are advantages in not insisting on homogeneity.)  Allowing $L^{m}_{\rho,\de}$ operators for general $\rho,\de$, of course, widens even more the class of pdo families considered. (See (5) below.)  Because of this generality (as well as the lack of an $A(G)$), in the present paper, one has to give up Theorem 1 of \cite{LMN00} since homogeneity is needed in order to involve the cosphere bundle.  

Assuming $G^{0}$ compact, the authors show that the principal symbol of an invariant elliptic pdo family defines a principal symbol class $[\si_{m}(D)]\in K^{0}(A^{*}(G))$. The analytic index morphism $\text{ind}_{a}:K^{i}(A^{*}(G))\to K_{i}(C^{*}(G))$ is constructed using the adiabatic groupoid, and $\text{ind}_{a}([\si_{m}(D)])$ is the analytic index of $D$.  In the situation of the present paper, the principal symbol of $D$ is an equivalence class of $S_{\rho,\de}^{m}(\text{Hom}(\pi^{*}E,\pi^{*}F))$, and defines through the constructed Kasparov module an element of $K(C^{*}_{red}(G))$.  The Kasparov module approach is a generalization of the original way that Atiyah and Singer obtained the analytic index $\text{ind}_{a}(D)$ directly as the Fredholm index of $D$.

Properness plays a key role in the present paper. An advantage of working in the $X=G$ context of \cite{LMN00} is that (following Connes \cite{Cointeg})
one can use $k(g(g')^{-1})$, where $k$ is a distribution on $G$, when discussing the kernel $K(g,g')$ of the pdo family. In the proof of boundedness, \cite{LMN00} adapts Theorem 18.1.11 of \cite{Hor}, while the present paper uses ``the very simple proof of $L^{2}$ continuity'' of \cite{Hor} immediately following that theorem.  Other parts of the proof of the boundedness theorem of \cite{LMN00} are not available in the generality of the present paper.  Instead, the approach of Connes-Moscovici, using cut-off functions and $\text{Av}(P)$, was modified. 

The study of the case $X=G$ has (as above) been remarkably successful and one might wonder if the general case can be reduced to that by changing the groupoid.  An example of this would be the use of the groupoid $M\x M$ in \cite{NWX} (see above) to interpret in groupoid terms the classical, non-equivariant index theorem for $M$.  But I do not know of a similar interpretation for the equivariant case.  (See (2) below.)

Such a reduction to the case $X=G$ also does not apply to the families theorems.  The natural groupoid interpretation (\cite{families}) is in terms of $G$-spaces where $X\ne G$.  For the original families theorem (\cite{AS4}), we have a fiber bundle $(X,p)$ over a compact space $T$ with $C^{\infty}$ compact manifold fiber.   
In this case, the continuous family groupoid involved is just
$T$, regarded as a unit space groupoid, and the action of $T$ on $X$ is given by: $t.x=x$ for $x\in X^{t}$.  More interesting, for the Atiyah-Singer equivariant families index theorem ($G$ a compact Lie group), the groupoid involved is the transformation group $G\x T$ which acts on $X$ by: $(g,t)x=gx$ for $x\in X^{t}$.  For these and other index theorems, we {\em have} to consider the continuous family case in a situation where $X\ne G$.  The case of a Lie groupoid (or even a continuous family groupoid) acting on itself is not enough. 

Another very important case - in the smooth category - where we need $X\ne G$ occurs in Connes's construction of the geometrical cycles for a smooth groupoid $G$ and the analytic assembly map briefly discussed below.  This involves a proper action of $G$ on a smooth manifold $P$ - $P$, of course, is {\em a fortiori} a $C^{\infty,0}$ $G$-space in the sense of this paper, and $P\ne G$ in general. The theory of this paper applies to that situation.  The $G$-space notion also arises very naturally in other contexts, e.g. in Morita theory for groupoids (\cite{MuReW}).\\

\noindent
{\bf (2) Index theorems for group actions}\\

Connes and Moscovici (\cite{CoMos}), against the background of the $L^{2}$-index theorem for covering spaces of Atiyah and Singer, proved the $L^{2}$-index theorem for homogeneous spaces of Lie groups.  More precisely, let $G$ be a Lie group (with additional, natural, conditions) and $H$ be a compact subgroup of $G$.  One then takes $X=G/H$ with $G$ acting by left multiplication on the cosets.  Obviously this action is proper (and $X\ne G$), and the results of the present paper apply.  Connes and Moscovici proved a numerical valued index theorem for a $G$-invariant elliptic pdo $D$ of order $0$ on $G/H$ (homogeneous vector bundles).  The analytic index $\text{ind}_{G}D$ of $D$ is defined using the $G$-trace $\text{tr}_{G}$.  Also $D$ defines 
through its principal symbol an element $[\si_{0}(D)]$ of the equivariant K-group $K_{H}(V)$, where  
$V=\{\xi\in \mathfrak{g}^{*}: \xi\mid \mfh=0\}$ (and $\mfg, \mfh$ are the Lie algebras of $G, H$ respectively).  They constructed a topological index $\text{ind}_{t}:K_{H}(V)\to \R$, and   using the heat equation method, showed that $\text{ind}_{G}D=\text{ind}_{t}([\si_{0}(D)])$. 

An analytic index in $K(C^{*}(G))$ was constructed later in great generality by Connes in his book (\cite[p.136f.]{Connesbook}), using the analytic assembly map and proper G-manifolds.  (This theory extends in the natural way to the continuous family context.) Connes constructs a map $\mu:K_{top}^{*}(G)\to K(C^{*}(G))$ which in turn determines in the obvious way a map
$\mu_{r}:K_{top}^{*}(G)\to K(C_{red}^{*}(G))$.  Here, 
$K_{top}^{*}(G)$ is the additive group determined by the geometric cycles.  Further in the  context of \cite{CoMos}, $K_{H}(V)=K(C^{*}(TP\rtimes G))\to K_{top}^{*}(G)$, so that we can regard
$[\si_{0}(D)]\in K_{top}^{*}(G)$.  He defines an analytic index for $D$ in $K(C^{*}(G))$ using quasi-isomorphisms, and shows that that index equals $\mu([\si_{0}(D)])$.  It seems very likely that
$\mu_{r}([\si_{0}(D)])$ coincides with $\text{ind}_{a}(D)$ in the sense of the present paper.  

In his paper \cite{Kasp1}, G. G. Kasparov sketched a proof of a general equivariant index theorem for group actions.  (I understand that a paper giving the details of the proof has not yet appeared.)  In Kasparov's setting, we have a separable locally compact group $G$ acting properly, smoothly and isometrically on a connected, $G$-compact, complete, Riemannian manifold $X$.  We are also given 
a $G$-invariant elliptic pdo $D$ on $X$.
Kasparov constructs a Hilbert $C^{*}(G)$-module on which $D$ is a Fredholm operator, and this gives a class in $K(C^{*}(G))$.  This class is called the {\em analytic index} of $D$.  Our construction in 
Proposition~\ref{prop:preH} is a groupoid version of Kasparov's.  Kasparov uses equivariant KK-theory to construct the topological index. \\

\noindent
(3) {\bf Examples of continuous family groupoids}\\

One way to obtain continuous family groupoids is by considering closed subgroupoids of Lie groupoids.  Let $H$ be a Lie groupoid and $X$ be an invariant closed subset of $H^{0}$.  Then the reduction of $H$ to $X$ is a continuous family groupoid.  Transformation groups $G\x X$, where $G$ is a Lie group and $X$ a locally compact space, are also continuous family groupoids (\cite{families}).  Although one normally considers smooth actions of a Lie group $G$ on a manifold $X$ (so that $G\x X$ is a Lie groupoid), there are many examples where one just has a continuous action of $G$ on a locally compact space $X$ (so that $G\x X$ is not a Lie groupoid), e.g. there is always the trivial action of $G$ on any $X$.  More interesting, if $G$ acts on a compact space $Y$, then 
it also acts continuously on $X=\prod_{i=1}^{\infty}Y$ with the diagonal action.

The first continuous family groupoid (not a Lie groupoid) that seems to have been explicitly considered in the literature is the holonomy groupoid of a $C^{\infty,0}$-foliated manifold $(V,\mathcal{F})$.  This was investigated in detail by Connes in \cite[p.111f.]{Cointeg}.   In their exposition of the Connes index theorem of that paper, Moore and Schochet (\cite{MooSchoch}) realized that it could be extended to foliated spaces (so that $V$ is not assumed to be a manifold). Effectively, a foliated space is a separable metrizable space $V$ with an atlas of charts $U_{x}\sim 
L_{x}\x N_{x}$ where $L_{x}$ is open in $\R^{p}$ ($p$ fixed) and 
whose coordinate changes are $C^{\infty,0}$.  (This notion is related to, but not the same as, the notion of a continuous family $(X,p)$ of manifolds discussed in the present paper.)  There are many examples of foliated spaces $V$ which are not manifolds e.g. a solenoid (\cite[p.42]{MooSchoch}). 

The holonomy groupoid $G$ of a foliated space $V$ is a continuous family groupoid in the sense of this paper.  To see this, recall (e.g. \cite[2.3]{Paterson}) that the elements of $G$ are triples $(x,[\ga],y)$ where $x,y$ belong to a leaf $L$, $\ga$ is a path in $L$ from $x$ to $y$ and $[\ga]$ is the holonomy class of $\ga$. The product is given by $(a,[\ga_{1}],b)(b,[\ga_{2}],c)=(a,[\ga_{1}\circ \ga_{2}],c)$ and inversion by 
$(a,[\ga_{1}],b)^{-1}=(b,[\ga_{1}^{-1}],a)$.  Assume that $G$ is Hausdorff.  Then $G$ is a continuous family groupoid.  To check this, conditions (i) and (ii) of 
Definition~\ref{def:sfg} are obvious.  For (iii), we just have to show that locally, the map
$(x,[\ga],y)\to f_{(x,[\ga],y)}$, where $f_{(x,[\ga],y)}(y,[\de],z)=(x,[\ga\circ\de],z)$ is 
$C^{\infty,0}$.  This follows using Connes's coordinatizing of the charts $W_{[\ga]}(U,V)$ for $G$ (e.g. \cite[p.69]{Paterson}).\\

(4) {\bf The $G$-invariant classical elliptic complexes}\\

Let $G, X$ be as in \S 5 of the paper.  Using a $C^{\infty,0}$-hermitian metric on $TX$ that is $G$-isometric, the classical elliptic complexes have their natural $G$-invariant versions.  Consider, for example, the de Rham complex $\{d_{i}\}$ on $X$.   Here, $d_{i}:C^{\infty,0}(E^{i})\to 
C^{\infty,0}(E^{i+1})$, where $E^{i}$ is the natural $C^{\infty,0}$ bundle whose fiber at $x$ is 
$\La^{i}(TX_{x}^{*})\otimes \C$, and $d_{i}$ is the exterior derivative.  Introduce $G$-invariant hermitian metrics on the $E^{i}$ (Proposition~\ref{prop:hermG}). Then as in \cite[p.521]{AS1}, $D:C^{\infty,0}(\oplus_{i}E^{2i})\to C^{\infty,0}(\oplus_{i}E^{2i+1})$, where 
$D=d+d^{*}$, is an elliptic, invariant, differential family on $X$ to which the theory of the present paper applies.  This is the {\em $G$-invariant de Rham family}.  Similarly, if $X$ is a continuous family of complex manifolds and $G$ acts holomorphically, then there is a $G$-invariant Dolbeault elliptic family. 

Next, suppose that $X$ is a $C^{\infty,0}$-oriented family.  This can be defined by adapting any of the usual definitions of orientation: for example, that there exists a non-vanishing $C^{\infty,0}$ longitudinal $k$-form on $X$ (where $k$ is the dimension of the $X^{u}$'s).   The $X^{u}$'s are then orientable manifolds.  Suppose further that the $G$-action preserves the orientation, that the $X^{u}$'s are all compact and that $k$ is divisible by $4$.  Then 
there is defined a $G$-invariant, elliptic differential family $D^{+}$ of Hodge signature operators for $X$.  Of course each $(D^{+})^{u}$ is the usual Hodge signature operator on $X^{u}$.  (The existence of this family is noted in \cite[p.134]{AS4} for the non-equivariant case and with $X$ a continuous family in the sense of Atiyah and Singer.)  In the case of a compact Lie group action, one has (\cite[p.578]{AS3}, \cite[p.140f.]{Shanahan}) 
$\text{ind}(D^{+})=\rho^{+} - \rho^{-}\in R(G)=K(C^{*}(G))$.  The $G$-Signature Theorem 
computes the character of this K-class.

Lastly, there are natural $C^{\infty,0}$ versions of spin manifolds and spin actions for $X$ and 
$G$.  (These are similar to the compact group versions - see, for example, \cite[p.158ff.]{Shanahan}.)
In those circumstances, and with $k$ even, there is a $G$-invariant Dirac elliptic family $D$.  
A special case of this, where $G$ is a bundle of Lie groups acting properly and smoothly on a fiber bundle $Y$, arises in the analysis of the Dirac operator on certain non-compact manifolds, and has been investigated in detail by Nistor (\cite{Nistor}).  He obtains the equivariant analytic index of $D$ in $K_{0}(C^{*}_{red}(G))$ by means of an element of $K_{0}(C^{*}_{r}(Y;G))$ (where $C^{*}_{r}(Y;G)$ is a ``regularizing'' algebra).  The theorem of the present paper also applies to that situation.  Local index theorems using traces are given by Nistor in \cite{Nistoras} when $G$ is a vector bundle.\\   

\noindent
(5) {\bf Hypoelliptic $L^{m}_{\rho,\de}$-pdo's and the Baum-Connes conjecture}\\

The reader may wonder why the pdo's of this paper are taken to be $L^{m}_{\rho,\de}$ instead of classical.  The justification for this is that 
index theory extends to elliptic, and indeed to hypoelliptic, $L^{m}_{\rho,\de}$-pdo's and (as will be seen below) such operators are required in noncommutative geometry.  Index theory for these pdo's was developed by H\"{o}rmander in \cite{Horindex}.  The definition of hypoelliptic (\cite[p.38]{Shubin}) is the same as that of elliptic except that we require there to exist an $m_{0}\in \R$ for which (\ref{eq:ellpdo}) is replaced by:
\begin{equation}           \label{eq:ellhypo}
C_{1}\left|\xi\right|^{m_{0}}\leq \left|\si_{D'}(x,\xi)\right|\leq C_{2}\left|\xi\right|^{m}.
\end{equation}
To prove the coincidence of the analytic and topological indices for a hypoelliptic pdo, he showed that it is connected up to elliptic operators and hence, by the continuity of the index for continuous families of pdo's, the indices coincide.  In the final part of the paper, he 
uses a continuous family of hypoelliptic $L^{0}_{1,1/2}$-pdo's to determine analytically the index of the Bott element. 

In noncommutative geometry, the classical elliptic calculus is not enough.  In particular, a hypoelliptic, $L^{m}_{\rho,\de}$ calculus is required in the study of foliations.  This calculus is developed in the paper of Hilsum and Skandalis (\cite{HilSkand}), where a K-homology analogue of a theorem of Connes ((\cite[Theorem 6.8]{ Cocyclic}) is proved.  Hilsum and Skandalis also prove that if $(V_{1},F_{1}), (V_{2},F_{2})$ are foliations, then every K-oriented morphism $f:V_{1}/F_{1}\to V_{2}/F_{2}$ determines a class $f!\in 
KK^{*}(C^{*}(V_{1}, F_{1}), C^{*}(V_{2}, F_{2}))$. (The existence of $f!$ was conjectured by Connes in (\cite{Cofol}).)  The case where $f$ is a submersion uses the hypoelliptic calculus.  Hilsum and Skandalis illustrate helpfully the difficulty for this case by considering the submersion $p:V/F\to \text{pt}$ ($(V,F)$ a foliation).  Replace the holonomy groupoid of $(V,F)$ by an \'{e}tale version, which, for the purpose of illustration, is taken to be a transformation group $X\x_{\al}\Ga$, where $\Ga$ is a group of diffeomorphisms on $X$.  (See \cite[p.139]{Cocyclic}.)  The natural candidate for $p!$ should then be determined by a Dirac operator $D$ on $X$.  Unfortunately, $\Ga$ may not preserve a Riemannian metric on $X$ and as a consequence, $D$ may not satisfy all of the required properties to give a Kasparov bimodule.  To solve this difficulty, the authors use an idea of Connes (\cite{Cocyclic}).  There is an ``almost isometric'' structure present, and $D$ is to be replaced by the sum of two ``partial'' Dirac operators differentiating in ``complementary'' directions.  This operator is hypoelliptic and of type $(\rho,\de)$ ($\de=1-\rho$).  For the general situation, one has to consider {\em tranversally elliptic} pseudodifferential operators, and, in a long appendix, Hilsum and Skandalis develop the theory for such operators in relation to connections and Kasparov products. 

Connes and Moscovici (\cite{CoMoscindex}), in their work on the local index formula in noncommutative geometry, refined the construction of \cite{HilSkand} so that it applies in the context of the spectral triple of a triangular structure.  They constructed a hypoelliptic operator by combining a longitudinal signature operator of order two with the usual signature operator in the transverse direction.  This is used in their study of the problem of computing, by a local formula, the cyclic cohomology Chern character of a spectral triple. The computation adapts the {\em Wodzicki residue}, the unique extension of the Dixmier trace to pseudodifferential operators.  (The problem of the computation of the local index for transversally hypoelliptic operators on foliations was later solved by Connes and Moscovici (\cite{CoMoschopf}).)  The pseudodifferential calculus used in \cite{CoMoscindex} is a special case of the pseudodifferential calculus on Heisenberg manifolds developed by Beals and Greiner (\cite{BG}).  Developing this theme, Ponge (\cite{Pongecal} and in his Ph.D. thesis) has constructed a sub-elliptic functional calculus for Heisenberg manifolds, and a noncommutative residue.

The work of J.L. Tu (\cite{Tunov,Tumoy,Tutrees,Tupre}) investigates the Baum-Connes conjecture for groupoids, using the groupoid equivariant KK-theory of P.-Y. Le Gall (\cite{LeGall0,LeGall,LeGall1}) and the Dirac-dual Dirac method (\cite[Chapter 9]{Valette}, \cite{Cocyclic}).  Tu determines (\cite[Th\'{e}or\`{e}me 5.24]{Tunov}) general conditions on a locally compact groupoid $G$ and the classifying space for proper $G$-actions
which ensure that the Baum-Connes map is injective.  This is used to show that a large class of foliations satisfy the Novikov conjecture.  The Baum-Connes map constructed by Tu in \cite{Tunov} applies to continuous family groupoids.  Further, the construction extends, to the groupoid context, part of the argument for the group equivariant index theorem given in outline much earlier by Kasparov in \cite{Kasp1}.  It seems to me hopeful that the rest of Kasparov's argument, using {\em the generalized Atiyah-Singer theorem} - in which the K-homology class of $D$ is shown to be a certain intersection product - will extend to the continuous family groupoid context, and that the conjecture of Connes, discussed in the introduction, can be established in complete generality.



\begin{thebibliography}{20}

\bibitem{AS1} Atiyah, M. F. and Singer, I.: The index of elliptic
operators, I, {\em Ann. of Math.} 87(1968), 484-530.

\bibitem{AS3} Atiyah, M. F. and Singer, I.: The index of elliptic
operators, III, {\em Ann. of Math.} 87(1968), 546-604.

\bibitem{AS4} Atiyah, M. F. and Singer, I.: The index of elliptic
operators, IV, {\em Ann. of Math.} 93(1971), 119-138.

\bibitem{A1} Atiyah, M. F.: Elliptic operators, discrete groups and von Neumann algebras, 
{\em Ast\'{e}risque} 32-33(1976), 43-72.

\bibitem{BG} Beals, R. and Greiner, P.: {\em Calculus on Heisenberg manifolds}, Annals of Math. Studies 119, Princeton, N. J., 1988.

\bibitem{Blackadar} Blackadar, B.: {\em K-theory for operator algebras},
MSRI Publications, Vol. 5, Springer-Verlag, New York, 1986.

\bibitem{Cointeg} Connes, A.: Sur la th\'{e}orie non commutative de
l'int\'{e}gration, {\em Lecture Notes in Mathematics}, 725(1979), 19-143.

\bibitem{Cofol} Connes, A.: A Survey of Foliations and Operator Algebras, in: Operator Algebras and Applications, {\em Proc. Symp. in Pure Math.}, American Mathematical Society, 38(1982), Part 1, pp.521-628.

\bibitem{Cocyclic} Connes, A.: Cyclic cohomology and the transverse fundamental class of a foliation, in: {\em Geometric methods in operator algebras}, ed. H. Araki and E. G. Effros, Pitman Research Notes in Mathematics, 123, Longman/Wiley, 1986, pp. 52-144.

\bibitem{Connesbook} Connes, A.: {\em Noncommutative Geometry}, Academic
Press, Inc., New York, 1994.

\bibitem{CoMos} Connes, A. and Moscovici, H.: The $L^{2}$-index theorem
for homogeneous spaces of Lie groups, {\em Ann. of Math.} 115(1982), 291-330.

\bibitem{CoMoscindex} Connes, A. and Moscovici, H.: The local index formula in noncommutative geometry, {\em Geom. and Funct. Analysis} 5(1995), 174-243.

\bibitem{CoMoschopf} Connes, A. and Moscovici, H.: Hopf Algebras, Cyclic Cohomology and the Transverse Index Theorem, {\em Commun. Math. Phys.} 198(1998), 199-246.

\bibitem{CoSkand} Connes, A. and Skandalis, G.: The longitudinal index theorem for foliations, {\em Publ. RIMS, Kyoto Univ.} 20(1984), 1139-1183. 

\bibitem{Eg} Egorov, Y. V. and Shubin, M. A. (Eds.): {\em Partial Differential
Equations II}, Encyclopedia of Mathematical Sciences, Vol. 31,
Springer-Verlag, Berlin, 1994.

\bibitem{Helgason} Helgason, S.: {\em Differential Geometry, Lie Groups and
Symmetric Spaces}, Academic Press, New York, 1978.

\bibitem{HilSkand} Hilsum, H. and Skandalis, G.: Morphismes K-orient\'{e}s d'espaces de feuilles et fonctorialit\'{e} en th\'{e}orie de Kasparov (d'apr\`{e}s une conjecture d'A. Connes), {\em Ann. Scient. \'{E}cole Norm. Sup.} 20(1987), 325-390.

\bibitem{Horindex} H\"{o}rmander, L.: On the index of pseudodifferential operators, in:
{\em Elliptische Differential\-gleichungen Band II}, Akademie-Verlag, Berlin, 1971, pp.127-146. 

\bibitem{Hor2} H\"{o}rmander, L.: Fourier integral operators, I, {\em Acta
Math.} 127(1971), 79-183.

\bibitem{Hor} H\"{o}rmander, L.: {\em The Analysis of Linear Partial
Differential Operators.}, Vol 3, New York, Springer-Verlag, 1985.

\bibitem{Kasp1} Kasparov, G. G.: An index for invariant elliptic
operators, K-theory, and representations of Lie groups, {\em Soviet Math. Dokl.}
27(1983), 105-109.

\bibitem{Kasp2} Kasparov, G. G.: Equivariant KK-theory and the Novikov
conjecture, {\em Invent. Math.} 91(1988), 147-201.

\bibitem{Landsman} Landsman, N. P.: {\em Mathematical Topics between Classical
and Quantum Mechanics}, Springer-Verlag, New York, 1998.

\bibitem{Lance} Lance, E. C.: {\em Hilbert $C^{*}$-modules}, London
Mathematical Society Lecture Note Series 210, Cambridge University Press, 1995.

\bibitem{LMN00} Lauter, R., Monthubert, B. and Nistor, V.: Pseudodifferential analysis on
continuous family groupoids, {\em Doc. Math.} 5(2000), 625-655.

\bibitem{LMN01} Lauter, R., Monthubert, B. and Nistor, V.: Spectral invariance for certain algebras of pseudodifferential operators. Preprint.

\bibitem{LN} Lauter, R. and Nistor, V.: Pseudodifferential analysis on
groupoids and singular spaces, preprint.

\bibitem{LeGall0}  Le Gall, P.-Y.: Th\'{e}orie de Kasparov  \'{e}quivariante et groupo\"{\i}des, Doctoral Thesis, University of Paris VII, 1994.

\bibitem{LeGall} Le Gall, P.-Y.: Th\'{e}orie de Kasparov \'{e}quivariante et 
groupo\"{\i}des, {\em C. R. Acad. Sci. Paris S\'{e}r.} 1, 324(1997), 695-698. 

\bibitem{LeGall1} Le Gall, P.-Y.: Th\'{e}orie de Kasparov \'{e}quivariante et 
groupo\"{\i}des I, {\em K-Theory} 16(1999), 361-390.

\bibitem{Mackenzie} Mackenzie, K. C. H.: {\em Lie Groupoids and Lie algebroids
in \text{Diff}erential Geometry}, London Mathematical Society Lecture Note Series,
vol. 124, Cambridge University Press, Cambridge, 1987.

\bibitem{Montthesis} Monthubert, B.: Groupo\"{i}des et calcul pseudo-diff\'{e}rentiel
sur les vari\'{e}t\'{e}s \`{a} coins.  Doctoral Thesis, Universit\'{e} Denis Diderot, Paris, 1998. 

\bibitem{Mont} Monthubert, B.: Pseudodifferential calculus on manifolds
with corners and groupoids, {\em Proc. Amer. Math. Soc.} 127(1999), 2871-2881.
[Errata: {\em Proc. Amer. Math. Soc.} 128(2000), 627.]

\bibitem{Mont3} Monthubert, B.: Groupoids of manifolds with corners and index theory, {\em Contemp. Math.} 282(2001), 147-157.

\bibitem{MontP} Monthubert, B. and Pierrot, F.: Indice analytique et
groupo\"{\i}des de Lie, {\em C. R. Acad. Sci. Paris, S\'{e}r. I}, 325(1997), 193-198.

\bibitem{MooSchoch} Moore, C. C. and Schochet, C.: {\em Global Analysis
on foliated spaces}, Mathematical Sciences Research Institute Publications,
Vol. 9, Springer-Verlag, New York, 1988.

\bibitem{MuhlyTCU} Muhly, P. S.: {\em Coordinates in Operator Algebra},
to appear, CBMS Regional Conference Series in Mathematics, American
Mathematical Society, Providence, 180pp..

\bibitem{MuReW} Muhly, P. S., Renault, J. N. and Williams, D. P.: 
Equivalence and isomorphism for groupoid $\css$, {\em J. Operator Theory}
17(1987), 3-22.

\bibitem{MuW} Muhly, P. S. and Williams, D. P.: Groupoid cohomology and
the Dixmier-Douady class, {\em Proc. London Math. Soc.} 71(1995), 109-134.

\bibitem{Nistoras} Nistor, V.: Asymptotics and index for families invariant with respect to a bundle of Lie groups, preprint.

\bibitem{Nistor} Nistor, V.: An index theorem for families invariant with respect to a bundle of Lie groups, preprint.

\bibitem{Nistor96} Nistor, V.: The index of operators on foliated bundles, {\em J. Functional Analysis} 141(1996), 421-434.

\bibitem{NWX} Nistor, V., Weinstein, A. and Xu, P.: Pseudodifferential
operators on differential groupoids, {\em Pacific J. Math.} 189(1999), 117-152.

\bibitem{Palais} Palais, R. S.: On the existence of slices for actions of
non-compact Lie groups, {\em Ann. of Math.} 73(1961), 295-323.

\bibitem{Paterson} Paterson, A. L. T.: {\em Groupoids, inverse semigroups and
their operator algebras}, Progress in Mathematics, Vol. 170, Birkh\"{a}user,
Boston, 1999.

\bibitem{jsrc} Paterson, A. L. T.: The analytic index for proper,
Lie groupoid actions, {\em Contemp. Math.} 282(2001), 115-135.

\bibitem{families} Paterson, A. L. T.: Continuous family groupoids, {\em Homology, Homotopy and Applications}, 2(2000), 89-104.

\bibitem{ag} Paterson, A. L. T.: The Fourier algebra for locally compact groupoids, 
preprint.

\bibitem{Phillips} Phillips, N. C.: {\em Equivariant K-theory for proper
actions}, Pitman Research Notes in Mathematics, Vol. 178, John Wiley, New
York, 1988.

\bibitem{Pongecal} Ponge, R.: Calcul fonctionnel sous-elliptique et r\'{e}sidu non commutatif sur les vari\'{e}t\'{e}s de Heisenberg, to appear, {\em Comptes-Rendus Acad. Sci.}.

\bibitem{Pongeindex} Ponge, R.: A new short proof of the local index formula and some of its applications, preprint.


\bibitem{Pra66} Pradines, J.: Th\'{e}orie de Lie pour les groupo\"{\i}des
diff\'{e}rentiables.  Relations entre propri\'{e}t\'{e}s locales et globales,
{\em C. R. Acad. Sci. Paris S\'{e}r. A-B} 263(1966), A907-A910.

\bibitem{Pra67} Pradines, J.: Th\'{e}orie de Lie pour les groupo\"{\i}des
diff\'{e}rentiables.  Calcul diff\'{e}rential dans la cat\'{e}gorie des
groupo\"{\i}des infinit\'{e}simaux, {\em C. R. Acad. Sci. Paris S\'{e}r. A-B}
264(1967), A 245-A 248.

\bibitem{Ramazan} Ramazan, B.: Deformation Quantization of Lie-Poisson
Manifolds, Doctoral Thesis, Universit\'{e} d'Orl\'{e}ans, 1998.

\bibitem{rg} Renault, J. N.: A groupoid approach to $\css$,
{\em Lecture Notes in Mathematics}, Vol. 793, Springer-Verlag, New York, 1980.

\bibitem{Segal1} Segal, G.: Equivariant K-theory, {\em Publ. Math. Inst.
Hautes \'{E}tudes Sci.} 34(1968), 129-151.

\bibitem{Segal2} Segal, S.: Fredholm Complexes, {\em Quart. J. Math.} Oxford
21(1970), 385-402.

\bibitem{Shanahan} Shanahan, P.: The Atiyah-Singer Index Theorem, {\em Lecture Notes in Mathematics}, Vol. 683, Springer-Verlag, New York, 1978.

\bibitem{Shubin} Shubin, M. A.: {\em Pseudodifferential Operators and Spectral
Theory}, Springer-Verlag, New York, 1987.

\bibitem{Tunov} Tu, J. L.: La conjecture de Novikov pour les feuilletages hyperboliques, {\em K-Theory} 16(1999), 129-184.

\bibitem{Tumoy} Tu, J. L.: La conjecture de Baum-Connes pour les feuilletages moyennables, {\em K-Theory} 17(1999), 215-264.

\bibitem{Tutrees} Tu, J. L.: The Baum-Connes conjecture and discrete group actions on trees, {\em K-Theory} 17(1999), 303-318.

\bibitem{Tupre} Tu, J. L.: The Baum-Connes conjecture for groupoids, Preprint, SFB 478 – Geometrische Strukturen in der Mathematik, Mathematischen Instituts der Westf\"{a}lishchen Wilhems-Universit\"{a}t, M\"{u}nster, 1999.

\bibitem{Valette} Valette, A.: {\em Introduction to the Baum-Connes Conjecture}, Lectures in Mathematics, ETH Z\"{u}rich, Birkh\"{a}user, Basel, Boston and Berlin, 2002.

\bibitem{Wells} Wells, R. O.: {\em Differential Analysis on Complex Manifolds},
Graduate Texts in Mathematics, Springer-Verlag, New York 1980.





\end{thebibliography}
\end{document}